\font\mfett=cmmib10 at11pt
\def\bgamma{\hbox{\mfett\char013}}
\def\gamra{\hbox{\mfett\char013}}
\def\bphi{\hbox{\mfett\char030}}
\newcounter{thm}
\numberwithin{thm}{section}
\numberwithin{equation}{section}
	\newtheoremstyle{myplain}		
			{}			
			{}			
			{\itshape}				
			{}				
			{\sffamily\bfseries}				
			{.}		
			{ }				
			{\thmname{#1}\thmnumber{ #2}\textnormal{\textsf{\thmnote{ (#3)}}}}			
    \newtheoremstyle{mybreak}
            {}{}{}{}{\sffamily\bfseries}{.}{\newline}
            {\thmname{#1}\thmnumber{ #2}\textnormal{\textsf{\thmnote{ (#3)}}}}
	\newtheoremstyle{mydef}
			{}{}{}{}{\sffamily\bfseries}{.}{ }
			{\thmname{#1}\thmnumber{ #2}}
	\newtheoremstyle{myrem}
			{}{}{}{}{\sffamily\itshape}{.}{ }
			{\thmname{#1}\thmnumber{ #2}}
\theoremstyle{myplain}
	\newtheorem{theorem}[thm]{Theorem}
\theoremstyle{mybreak}
\theoremstyle{mydef}
	\newtheorem{remark}[thm]{Remark}
\theoremstyle{mydef}
	\newtheorem{example}[thm]{Example}
	\newcommand{\cc}{\mathbb{C}}
		\newcommand{\nn}{\mathbb{N}}
\newcommand{\argmax}{\mathop{\mathrm{argmax}}}
\def\sumprime_#1^#2{
    \setbox0=\hbox{$\scriptstyle{#1}$}
    \setbox1=\hbox{$\scriptstyle{#2}$}
    \setbox2=\hbox{$\displaystyle{\sum}$}
    \setbox4=\hbox{${}^\prime\mathsurround=0pt$}
    \dimen0=.5\wd0 \advance\dimen0 by-.5\wd2
    \ifdim\dimen0>0pt
        \ifdim\dimen0>\wd4 \kern\wd4
        \else\kern\dimen0
        \ifdim\dimen1>\wd4 \kern\wd4
        \else\kern\dimen1
    \fi\fi\fi
\mathop{{\sum}^\prime}_{\kern-\wd4 #1}^{\kern-\wd4 #2}
}
\title{\Large From ESPRIT to ESPIRA:  Estimation of  Signal  Parameters  by Iterative Rational Approximation}
\author{Nadiia Derevianko\footnote{Institute for Numerical and Applied Mathematics, G\"ottingen University, Lotzestr.\ 16-18, 37083 G\"ottingen, Germany, \{n.derevianko,plonka,m.petz\}@math.uni-goettingen.de} \footnote{Corresponding author} \quad Gerlind Plonka$^{*}$ \quad Markus Petz$^{*}$}
\date{}
\begin{document}
	\let\oldproofname=\proofname
	\renewcommand{\proofname}{\itshape\sffamily{\oldproofname}}

\maketitle

\begin{abstract}
We introduce a new method for Estimation of Signal Parameters based on Iterative Rational Approximation (ESPIRA) for  sparse exponential sums. Our algorithm uses the AAA algorithm  for rational approximation of the discrete Fourier transform  of the given equidistant signal values.
We show that ESPIRA can be interpreted as  a matrix pencil method  applied to Loewner matrices.
These Loewner matrices  are closely connected  with the Hankel matrices which are usually employed for signal recovery. Due to the construction of the Loewner matrices via an adaptive selection of index sets, the matrix pencil method is stabilized. ESPIRA achieves similar  recovery results for exact data as ESPRIT and the matrix pencil method  but  with less computational  effort. Moreover, ESPIRA strongly
outperforms ESPRIT and the matrix pencil method  for noisy data and for signal approximation by short exponential sums.\\[1ex]
\textbf{Keywords:}  sparse exponential sums, matrix pencil method, ESPRIT, rational interpolation, AAA algorithm, Loewner matrices, Hankel matrices.\\
\textbf{AMS classification:}
41A20, 42A16, 42C15, 65D15, 94A12.
\end{abstract}

\section{Introduction}
\label{introduction}

We consider  exponential sums of the form 
\begin{equation}\label{1.1}
f(t) = \sum_{j=1}^{M} \gamma_{j} \, {\mathrm e}^{\phi_{j}t} = \sum_{j=1}^{M} \gamma_{j} \, z_{j}^{t},
\end{equation}
where $M \in {\mathbb N}$, $\gamma_{j} \in {\mathbb C}\setminus \{ 0\}$ and $z_{j}= {\mathrm e}^{\phi_{j}} \in {\mathbb C}\setminus \{ 0\}$ with $\phi_{j} \in {\mathbb C}$ are pairwise distinct.
  We are interested in the following reconstruction problem:
For given (possibly perturbed) function values $f_{k} \coloneqq f(k)$, $k=0, \ldots, 2N-1$, $N\geq M$,
the goal is to recover the parameters $M$, $\gamma_{j}$, $z_{j}$, $j=1, \ldots , M$, that determine $f$ in (\ref{1.1}).

The recovery of exponential sums of the form (\ref{1.1}) from a finite set of possibly corrupted signal samples plays an important role in many signal processing applications, see e.g. 
\cite{Beinert17, BM05, Bossmann12,  Antoulas14, Knaepkens20, Lobos03, Pereyra10, PT14, VMB02}, or \cite{PPST18}, Chapter 10. 
For example, such reconstruction methods for exponential sums have been successfully applied  in phase retrieval \cite{Beinert17},  signal approximation  \cite{BM05,BH05}, sparse deconvolution in nondestructive testing \cite{Bossmann12}, model reduction in system theory \cite{Antoulas14},  direction of arrival estimation \cite{Knaepkens20}, exponential data fitting \cite{Pereyra10}, or reconstruction of signals  with finite rate of innovation \cite{VMB02}.
Often, the exponential sums occur as Fourier transforms or higher order moments of discrete measures (or streams of Diracs) of the form $\sum_{j=1}^{M} \gamma_{j} \, \delta( \cdot - T_{j})$ with $T_{j} \in {\mathbb R}$, see e.g.\ \cite{CRT06, Duval15, Fern16}, which leads to the special case that $\phi_{j} = {\mathrm i} T_{j}$ in  (\ref{1.1}) is purely complex, i.e., $|z_{j}| = 1$. Other applications employ decaying exponential sums with  real knots $z_{j} \in (-1,1)$, see \cite{BM05, H2019}.

If instead of parameters $z_{j}$ the frequency parameters $\phi_{j}$ in (\ref{1.1}) need to be recovered, then $\phi_{j} = \ln(z_{j})$ only provides $\textrm{Im}(\phi_{j})$ modulo $2\pi$. In this case, a correct recovery of $\textrm{Im}(\phi_{j})$ can be achieved by considering $\tilde{z}_{j} = z_{j}^{h}$ with a suitable sampling step size $h$, where $h$ is taken such that $h \, \textrm{Im}(\phi_{j}) \in [-\pi, \pi)$ for all $j=1, \ldots ,M$. Therefore, to fix $h$,   one needs  a priori knowledge on an upper bound for $\max_{j} |\textrm{Im}(\phi_{j})|$. Then, instead of $f_{k} \coloneqq f(k)$,  one employs here the samples $f_{k} \coloneqq f(t_{0}+  h k)$, $k=0, \ldots, 2N-1$, $N \ge M$, with   $t_{0} \in {\mathbb R}$.  This case can be transferred to our setting by  choosing $\tilde{f}(t) = \sum_{j=1}^{M} \Tilde{\gamma}_{j} \, \tilde{z}_{j}^{t}$ with 
$\tilde{\gamma}_{j} = \gamma_{j} z_{j}^{t_{0}}$ and $\tilde{z}_{j} = z_{j}^{h}$, such that $\tilde{f}(k)= f(t_{0}+ hk)$.

To recover the parameters  $\gamma_{j}$ and $z_{j}$ in (\ref{1.1}), one often employs Prony's method, which is based on the observation  that (\ref{1.1})
can be seen  as a solution  to a linear difference equation  of order $M$ with constant coefficients. Let the characteristic polynomial (Prony polynomial)
$$
 p(z) \coloneqq \prod_{j=1}^{M} (z - z_{j})
 $$ 
 be defined by the zeros $z_{j}$, which are the pairwise distinct parameters in (\ref{1.1}), and consider the corresponding monomial representation $p(z) = z^{M} + \sum_{\ell=0}^{M-1} p_{\ell} \, z^{\ell}$. Then the coefficients $p_{\ell}$ satisfy 
\begin{eqnarray*}
\sum_{\ell=0}^{M-1} p_{\ell} \, f_{\ell+m} &=& \sum_{\ell=0}^{M-1} p_{\ell} \sum_{j=1}^{M} \gamma_{j} \, z_{j}^{\ell + m}
= \sum_{j=1}^{M} \gamma_{j} \, z_{j}^{m} \sum_{\ell=0}^{M-1} p_{\ell} \, z_{j}^{\ell} \\
&=&  \sum_{j=1}^{M} \gamma_{j} \, z_{j}^{m} (p(z_{j}) - z_{j}^{M}) = - \sum_{j=1}^{M} \gamma_{j} \, z_{j}^{m+M} = -f_{m+M}
\end{eqnarray*}
for $m=0, \ldots , 2N-M-1$, and we obtain  a structured linear system  to compute the coefficient vector ${\mathbf p} = (p_{\ell})_{\ell=0}^{M-1}$, 
\begin{equation}\label{1.2}
\left( f_{\ell+m} \right)_{m,\ell=0}^{2N-M-1,M-1} \, {\mathbf p} = \left( -f_{m+M} \right)_{m=0}^{2N-M-1}.
\end{equation}
Equivalently,
\begin{equation}\label{1.3}
 \left( f_{\ell+m} \right)_{m,\ell=0}^{2N-M-1,M} \, \begin{pmatrix} {\mathbf p} \\ 1 \end{pmatrix} = {\mathbf 0}, 
 \end{equation}
i.e., $({\mathbf p}^{T}, 1)^{T}$ is a kernel vector of the Hankel matrix $\left( f_{\ell+m} \right)_{m,\ell=0}^{2N-M-1,M}$.
If ${\mathbf p}$ is known, then  we build $p(z)$, compute the zeros $z_{j}$ of $p(z)$ and  find the parameters $\gamma_{j}$ in (\ref{1.1}) as a least squares solution to  the overdetermined system
$$ \sum_{j=1}^{M} \gamma_{j} \, z_{j}^{k} = f_{k}, \qquad k=0, \ldots, 2N-1.$$
The described procedure  is known as the classical Prony method. It can  be simply observed that the Hankel matrix
\begin{equation}\label{HH} {\mathbf H}_{2N-M,M+1} \coloneqq \left( f_{\ell+m} \right)_{m,\ell=0}^{2N-M-1,M} 
\end{equation}
has indeed rank $M$ since it admits the factorization 
\begin{equation}\label{1.4}
{\mathbf H}_{2N-M,M+1} = {\mathbf V}_{2N-M,M}({\mathbf z}) \, \mathrm{diag}  \Big(\left( \gamma_{j} \right)_{j=1}^{M}  \Big) \, {\mathbf V}_{M+1,M}({\mathbf z})^{T}, 
\end{equation}
where ${\mathbf z} \coloneqq (z_{1}, \ldots , z_{M})^{T}$ and  
\begin{equation}\label{V} {\mathbf V}_{L,M}({\mathbf z}) \coloneqq \left( z_{j}^{m} \right)_{m=0,j=1}^{L-1,M} 
\end{equation}
denotes the Vandermonde matrix  generated by the knots $z_{j}$. All three matrix factors  in (\ref{1.4})  have full rank $M$ by assumption.
Thus, ${\mathbf p}$  is uniquely determined by  (\ref{1.3})  and the parameters $z_{j}$, $\gamma_{j}$, $j=1, \ldots , M$, are uniquely defined for $N\ge M$.
\medskip

However, the classical Prony method is numerically 
unstable, since the Hankel matrix in (\ref{1.2}) has usually a very large condition number already for small $M$.
If the distance between different frequency parameters $\phi_{j_{1}}$ and $\phi_{j_{2}}$ (or equivalently between knots $z_{j_{1}}$ and $z_{j_{2}}$)  gets small, then the reconstruction problem  changes to an ill-posed setting \cite{Bat2018}, and the condition numbers of the involved Vandermonde matrices in (\ref{1.4}) 
grow exponentially. 
Therefore  one needs to choose a numerical procedure to solve the parameter estimation problem (\ref{1.1}) with care.
There have been several ideas  to improve the  stability  of the classical Prony method described above including  MUSIC \cite{Schmidt86},  ESPRIT \cite{Hua90}, and the matrix pencil method (MPM) \cite{RK89}.
Further, several modifications have been proposed  to achieve lower computational cost \cite{PT15} or to ensure consistency of the parameter estimation in case of noisy data \cite{BM86,Osborne95,ZP18}.
The two most frequently used approaches MPM and ESPRIT are based on the same concept, namely the solution of a Hankel matrix pencil problem to recover the parameters $z_{j}$ in (\ref{1.1}). For a survey and comparison of these two algorithms we refer to \cite{PT2013}.
\smallskip

In our recent work \cite{DP21, PPD21}, we have investigated a different approach for parameter reconstruction for exponential sums, where we employed a finite set of Fourier coefficients of the signal $f$ that occur in a Fourier expansion of $f$ on a finite interval $(0, P)$.
We have shown in \cite{DP21, PPD21} that the signal recovery problem can then be  rephrased  as a rational interpolation  problem, which in turn is solvable in a stable way  using  the AAA algorithm \cite{NST18}. The AAA algorithm is available in Chebfun, see \cite{DHT14}.
A related approach  that connects trigonometric rational approximation and Prony techniques has been recently proposed in \cite{WDT21}.
Further,  in \cite{BM09} and \cite{PP19} somewhat converse ideas have been applied for adaptive signal representation, where the Fourier coefficients  are approximated by short exponential sums.

However, in most applications, only the equidistant  signal values  of $f(t)$ in (\ref{1.1}) are available or can be computed. Therefore, in this paper we will study the following questions.
\begin{itemize}
\item How can the  adaptive rational approximation method based on the AAA algorithm  be applied to reconstruct $f$ from equidistant function values?
\item  How does the resulting algorithm relate to the known stable recovery algorithms MPM and ESPRIT?
\item How does the resulting algorithm perform compared  to MPM and ESPRIT?
\end{itemize}

We will introduce a new method for \textbf{E}stimation of \textbf{S}ignal \textbf{P}arameters based on \textbf{I}terative  \textbf{R}ational \textbf{A}pproximation (ESPIRA). In particular, we will  show that the new ESPIRA algorithm can be understood as  a matrix pencil algorithm, but for Loewner matrices instead of Hankel matrices. The Loewner matrix occurring in our method  can be factorized  into a product of three matrices, namely the Hankel matrix in (\ref{1.4}) and two  structured matrices, which can be interpreted as inverses of certain Vandermonde matrices (up to  multiplication with a diagonal matrix). The Loewner  matrices evolve  from the adaptive strategy of choosing interpolation points in the AAA algorithm, and this greedy index selection strongly stabilizes the consecutive  procedure to solve the matrix pencil problem.
The connection  of Hankel matrices and Loewner matrices used in our method  goes back to Fiedler \cite{Fiedler}.  
 The importance of Loewner matrices in scalar rational interpolation has been extensively investigated in \cite{AA86,Be70}.
The obtained ESPIRA algorithms work with similar exactness as MPM and ESPRIT for exact data but essentially outperform these methods in case of noisy input data and for signal approximation. We conjecture that ESPIRA-II is indeed  statistically consistent  while MPM and ESPRIT are not,  see \cite{Osborne95, ZP18}. Moreover, for larger data sets, the two ESPIRA algorithms require much less computational effort. While MPM and ESPRIT need ${\mathcal O}(N^{3})$ flops  to achieve optimal recovery results, the  ESPIRA algorithms have computational costs of  ${\mathcal O}(N (M^{3}+\log N))$ where $M$  denotes the length of the exponential sum in (\ref{1.1}) and is usually small and where $2N$ is the number of given signal samples.
\smallskip

\textbf{Outline of the paper.}
 In Section 2, we  survey the MPM and ESPRIT method, the currently most used methods for recovery of exponential sums. As already shown in \cite{PT2013}, these two methods are closely related and based on solving a matrix pencil problem for two closely related Hankel matrices.

Section 3 is devoted to the new ESPIRA approach which is based on rational approximation of the DFT coefficients of the given data vector ${\mathbf f}= (f_{k})_{k=0}^{2N-1}$. We introduce the ESPIRA-I algorithm by representing the parameter reconstruction problem as a rational interpolation problem at knots on the unit circle in Section 3.1. To solve this problem, we employ the AAA algorithm in \cite{NST18} for our setting in Section 3.2. The adaptively constructed  Loewner matrices occurring in the AAA algorithm  will play a crucial role in our further investigations. In a further step, the obtained rational interpolant needs to be transferred into a partial fraction decomposition, such that all wanted parameters of the signal can be determined, see Section 3.3. In the special case that some of the wanted knots $z_{j}$ in (\ref{1.1}) satisfy $z_{j}^{2N} = 1$, the fractional structure of the DFT coefficients is lost, and the recovery process needs to be modified as shown in Section 3.4. In Section 3.5 we prove that the iterative AAA algorithm used in ESPIRA terminates for exact input data after exactly $M+1$ iteration steps.

In Section 4, we investigate the relations between ESPIRA, ESPRIT and MPM.  Employing the results in \cite{Fiedler},
we show that there is a close relation between the Hankel matrix ${\mathbf H}_{2N-M,M+1}$ in (\ref{1.4}) and the Loewner matrix 
$$ {\mathbf L}_{2N-M-1,M+1} = \left( \frac{\omega_{2N}^{\ell} \, \hat{f}_{\ell} - \omega_{2N}^{k} \hat{f}_{k} }{\omega_{2N}^{-\ell} - \omega_{2N}^{-k}} \right)_{\ell\in \Gamma_{M+1}, k \in S_{M+1}} $$
where $S_{M+1} \cup \Gamma_{M+1} = \{0, \ldots , 2N-1\}$ is a partition of the index set, and where $S_{M+1}$ has $M+1$ components. This relation provides a simple proof that ${\mathbf L}_{2N-M-1,M+1}$  has rank $M$ (in case of exact data) and leads to the new algorithm ESPIRA-II, which is again a matrix pencil method, but for Loewner instead of Hankel matrices. ESPIRA-II uses only the adaptively found index set $S_{M+1}$ from the AAA algorithm but  no longer computes the rational interpolant itself. Therefore a special treatment of knots $z_{j}$ with $z_{j}^{2N} = 1$ is no longer needed.

The numerical experiments in Section 5 show that ESPIRA-I and ESPIRA-II provide very good recovery results for exact input data. 
In the case of noisy data the new algorithms strongly outperform MPM and ESPRIT. In particular, good recovery results are provided, where MPM and ESPRIT fail completely. 
Moreover,  our algorithms can also be used for approximation of functions by short exponential sums, as shown in Section 6. Using double precision arithmetic in \textsc{Matlab}, we achieve almost the same accuracy for approximation of $f(t) = \frac{1}{t+1}$ in $[0,1]$ as the Remez algorithm in \cite{BH05}, which has been performed in high precision arithmetic. Our approximation results for Bessel functions outperform earlier results in \cite{BM05} and in \cite{PT2013}. In a final example we approximate the Dirichlet function with high accuracy, where MPM and ESPRIT completely fail.

\smallskip

Throughout this paper, we will use the matrix notation ${\mathbf A}_{L,K}$ for complex matrices of size $L \times K$ and the submatrix notation
${\mathbf A}_{L,K}(m:n,k:\ell)$ to denote a  submatrix of ${\mathbf A}_{L,K}$ with rows indexed $m$ to $n$ 
and columns indexed $k$ to $\ell$, where (as in \textsc{Matlab}) the first row and first column has index $1$ (even though the row- and column indices for the definition of the matrix may start with $0$). For square matrices  we often use the short notation ${\mathbf A}_{N}$ instead of ${\mathbf A}_{N,N}$.

\section{Matrix Pencil Method and ESPRIT Algorithm}
\label{sec:MPM}

In order to improve the numerical stability  of the parameter estimation in exponential sums (\ref{1.1}), there have been several attempts  to stabilize  the estimation procedure.
Nowadays, the most frequently used methods are the matrix pencil method \cite{Hua90} and the ESPRIT method \cite{RK89}. 
As shown in \cite{PT2013},  these two methods  are very closely related  and are both based  on suitable factorizations of the  underlying Hankel matrix.
In this section we shortly summarize these two recovery methods, which are later compared to our new approach.
\medskip

Assume that $M$, the number  of terms in (\ref{1.1}),  is  unknown and we only have an upper bound  $L$ with $M \le L \le N$ and the given data 
$f_{k}= f(k)$, $k=0, \ldots , 2N-1$.  If the data $f_{k}$ are exact, then
\begin{equation}\label{1.5}
{\mathbf H}_{2N-L,L+1} = \left( f_{k+\ell}\right)_{k,\ell=0}^{2N-L-1,L}
\end{equation}
possesses exactly rank $M$, since the structure of $f_{k}$ in (\ref{1.1}) directly implies
$$ {\mathbf H}_{2N-L,L+1} = {\mathbf V}_{2N-L,M}({\mathbf z})  \, \mathrm{diag}  \Big( \left( \gamma_{j} \right)_{j=1}^{M} \Big) \, {\mathbf V}_{L+1,M}({\mathbf z})^{T}, 
$$
similarly as in (\ref{1.4})--(\ref{V}). Thus, theoretically, the number $M$ of terms in (\ref{1.1}) is given  as the rank of ${\mathbf H}_{2N-L,L+1}$.  Practically, if the measurements $f_{k}$ are slightly perturbed, we have to compute the numerical rank  of ${\mathbf H}_{2N-L,L+1}$.

Consider the two Hankel matrices 
$$ {\mathbf H}_{2N-L,L}(0) \coloneqq  \left( f_{k+\ell}\right)_{k,\ell=0}^{2N-L-1,L-1}, \qquad  {\mathbf H}_{2N-L,L}(1) \coloneqq  \left( f_{k+\ell+1}\right)_{k,\ell=0}^{2N-L-1,L-1},
$$
both of the same size, where the first matrix is obtained from ${\mathbf H}_{2N-L,L+1}$ in (\ref{1.5})  by removing the last column, and the second matrix  is found by removing the first column. Then these matrices 
satisfy the factorizations
\begin{eqnarray}\label{extra}
{\mathbf H}_{2N-L,L}(0) &=&  {\mathbf V}_{2N-L,M}({\mathbf z})  \, \mathrm{diag}  \Big(\left( \gamma_{j} \right)_{j=1}^{M} \Big)\, {\mathbf V}_{L,M}({\mathbf z})^{T}, \\
\label{extra1}
{\mathbf H}_{2N-L,L}(1) &=&  {\mathbf V}_{2N-L,M}({\mathbf z})  \, \mathrm{diag}  \Big( \left( \gamma_{j} \, z_{j} \right)_{j=1}^{M} \Big) \, {\mathbf V}_{L,M}({\mathbf z})^{T}.
\end{eqnarray}

\subsection{Matrix Pencil Method (MPM)}
The matrix pencil method (MPM) is essentially based  on the observation  that the rectangular matrix pencil
\begin{equation}\label{1.6}
z \, {\mathbf H}_{2N-L,L}(0) - {\mathbf H}_{2N-L,L}(1)
\end{equation}
possesses the wanted  parameters $z=z_{j}$ as generalized eigenvalues, i.e.,  for $z=z_{j}$, $j=1, \ldots, M$,  the  matrix 
$z_{j}\, {\mathbf H}_{2N-L,L}(0) - {\mathbf H}_{2N-L,L}(1)$  has a nontrivial kernel.
Indeed, using the factorizations of  ${\mathbf H}_{2N-L,L}(0) $ and ${\mathbf H}_{2N-L,L}(1)$ in (\ref{extra})--(\ref{extra1}),
it follows  for any $z_{j} \in \{z_{1}, \ldots , z_{M} \}$ that the matrix
$$ z_{j} \,  {\mathbf H}_{2N-L,L}(0) - {\mathbf H}_{2N-L,L}(1) = {\mathbf V}_{2N-L,M}({\mathbf z})  \, \mathrm{diag}  \Big( \left( (z_{j} - z_{k})\gamma_{k} \right)_{k=1}^{M} \Big) \, {\mathbf V}_{L,M}({\mathbf z})^{T}$$
possesses only rank $M-1$, and therefore $z_{j}$ is an eigenvalue of the matrix pencil in (\ref{1.6}).

For a stable recovery of ${\mathbf z}$ from the  generalized (matrix pencil) eigenvalue problem (\ref{1.6}), a QR decomposition  is applied to 
$({\mathbf H}_{2N-L,L}(0), {\mathbf H}_{2N-L,L}(1)) \in {\mathbb C}^{2N-L,2L}$ in \cite{Hua90}.
However, as proposed in \cite{PT2013}, it suffices to  compute a QR decomposition (with column pivot) of ${\mathbf H}_{2N-L,L+1}$ in (\ref{1.5}).
More precisely,  let 
\begin{equation}\label{QR} {\mathbf H}_{2N-L,L+1} \, {\mathbf P}_{L+1}  = {\mathbf Q}_{2N-L} \, {\mathbf R}_{2N-L,L+1} 
\end{equation}
be a QR decomposition of ${\mathbf H}_{2N-L,L+1}$  with column pivot, where ${\mathbf P}_{L+1}$ is a permutation matrix and ${\mathbf Q}_{2N-L}$  is a unitary square matrix.
If we have exact input data, then 
${\mathbf R}_{2N-L,L+1}$  is a trapezoidal  matrix of the form $\left( \begin{array}{c} {\mathbf R}_{M,L+1} \\ {\mathbf 0} \end{array} \right)$, where ${\mathbf R}_{M,L+1}\coloneqq{\mathbf R}_{2N-L,L+1}(1:M,1:L+1)$ is trapezoidal with full rank $M$. Thus, the parameter $M$ can be estimated as the numerical rank of ${\mathbf R}_{2N-L,L+1}$.
Using (\ref{QR}), the matrix pencil in (\ref{1.6}) can be rewritten as 
\begin{eqnarray*}
&&  z ({\mathbf Q}_{2N-L} {\mathbf R}_{2N-L,L+1} {\mathbf P}_{L+1}^{T})(1:2N-L,1:L)
\\
&& \hspace{5cm} - ({\mathbf Q}_{2N-L} {\mathbf R}_{2N-L,L+1} {\mathbf P}_{L+1}^{T})(1:2N-L,2:L+1), \end{eqnarray*}
which is equivalent to 
$$ z ({\mathbf R}_{2N-L,L+1} {\mathbf P}_{L+1}^{T})(1:2N-L,1:L)
- ({\mathbf R}_{2N-L,L+1} {\mathbf P}_{L+1}^{T})(1:2N-L,2:L+1). $$
Using the structure of ${\mathbf R}_{2N-L,L+1}$, we  can further simplify to
$$ z ({\mathbf R}_{2N-L,L+1} {\mathbf P}_{L+1}^{T})(1:M,1:L)
- ({\mathbf R}_{2N-L,L+1} {\mathbf P}_{L+1}^{T})(1:M,2:L+1), $$
see \cite{PT2013}. 
We summarize the MPM as given in \cite{PT2013} in Algorithm \ref{algMPM}.

\begin{algorithm}[ht]\caption{Matrix Pencil Method (MPM)}
\label{algMPM}
\small{
\textbf{Input:} ${\mathbf f} = (f_{k})_{k=0}^{2N-1} = \big(f(k)\big)_{k=0}^{2N-1}$ (equidistant sampling values of $f$ in (\ref{1.1}))\\
\phantom{\textbf{Input:}} $L$, upper bound of $M$ with $M \le L < N$\\
\phantom{\textbf{Input:}} accuracy $\epsilon >0$

\begin{enumerate}
\item Build the Hankel matrix ${\mathbf H}_{2N-L, L+1} \coloneqq \left( f_{\ell+m} \right)_{\ell,m=0}^{2N-L-1,L}$ and 
compute the QR decomposition (with column pivot)  of ${\mathbf H}_{2N-L, L+1}$ in (\ref{QR}).
Determine the numerical rank $M$ of ${\mathbf R}_{2N-L,L+1}$ by taking the smallest $M$ such that 
$$ {\mathbf R}_{2N-L,L+1}(M+1,M+1) < \epsilon \, {\mathbf R}_{2N-L,L+1}(1,1). $$
\item
Form
\begin{eqnarray*}
 {\mathbf S}_{M,L}(0) &\coloneqq& ({\mathbf R}_{2N-L,L+1} {\mathbf P}_{L+1}^{T})(1:M,1:L),\\
  {\mathbf S}_{M,L}(1) &\coloneqq& ({\mathbf R}_{2N-L,L+1} {\mathbf P}_{L+1}^{T})(1:M,2:L+1)
\end{eqnarray*}
and determine the vector of eigenvalues ${\mathbf z} =(z_{1}, \ldots , z_{M})^{T}$ of the generalized eigenvalue problem 
$z {\mathbf S}_{M,L}(0) - {\mathbf S}_{M,L}(1)$, or equivalently, the vector of eigenvalues of 
$\left(  {\mathbf S}_{M,L}(0)^{T} \right)^{\dagger}  {\mathbf S}_{M,L}(1)^{T}, $
where $\left(  {\mathbf S}_{M,L}(0)^{T} \right)^{\dagger}$ denotes the Moore-Penrose inverse of  ${\mathbf S}_{M,L}(0)^{T} $.
\item Compute ${\bgamma} = (\gamma_{j})_{j=1}^{M}$  as the least squares solution of the linear system 
$ {\mathbf V}_{2N,M} ({\mathbf z}) \, \bgamma = {\mathbf f} $ with ${\mathbf V}_{2N,M} ({\mathbf z})$ as defined in (\ref{V}).
\end{enumerate}

\noindent
\textbf{Output:} $M \in {\mathbb N}$,  $z_{j}, \, \gamma_{j} \in {\mathbb C}$, $j=1, \ldots , M$.}
\end{algorithm}

\begin{remark}\label{remMPM}
In \cite{PT2013}, the authors propose to apply the matrices 
\begin{eqnarray*}{\mathbf S}'_{M,L}(0) &=& {\mathbf D}_{M}^{-1} ({\mathbf R}_{2N-L,L+1} {\mathbf P}_{L+1}^{T})(1:M,1:L), \\
{\mathbf S}'_{M,L}(1) &=& {\mathbf D}_{M}^{-1} ({\mathbf R}_{2N-L,L+1} {\mathbf P}_{L+1}^{T})(1:M,2:L+1)
\end{eqnarray*}
 in the second step of the algorithm for further preconditioning, where ${\mathbf D}_{M}$ contains the first $M$ diagonal entries of ${\mathbf R}_{2N-L,L+1}$. \end{remark}

The arithmetical complexity of the QR decomposition of ${\mathbf H}_{2N-L,L+1}$ in step 1 of Algorithm \ref{algMPM} requires ${\mathcal O}\big((2N-L)L^{2}\big)$ operations.
Step 2 involves besides the matrix inversion and matrix multiplication the solution of the eigenvalue problem for an $M \times M$ matrix with ${\mathcal O}(M^{3})$ operations.
Thus, we have overall computational costs of ${\mathcal O}(NL^{2})$ and for $L \approx N$ we require ${\mathcal O}(N^{3})$ operations.

\medskip

\subsection{ESPRIT Algorithm}
The ESPRIT algorithm derived in \cite{PT2013} employs the  SVD  of ${\mathbf H}_{2N-L,L+1}$ in (\ref{1.5}),
\begin{equation}\label{SVD}
 {\mathbf H}_{2N-L,L+1} = {\mathbf U}_{2N-L} \, {\mathbf D}_{2N-L,L+1} {\mathbf W}_{L+1}, 
\end{equation} 
where  ${\mathbf U}_{2N-L}$ and ${\mathbf W}_{L+1}$  are unitary  square matrices  and, in the case of unperturbed data,
$$ {\mathbf D}_{2N-L,L+1} = \left( \begin{array}{cc} \mathrm{diag} \left(  (\sigma_{j})_{j=1}^{M} \right) & {\mathbf 0} \\
{\mathbf 0} & {\mathbf 0} \end{array} \right) \in {\mathbb R}^{2N-L,L+1}, $$
where $\sigma_{1} \ge  \ldots  \ge \sigma_{M}$ denote the positive singular values of ${\mathbf H}_{2N-L,L+1}$.
Thus (\ref{SVD}) can be simplified to 
\begin{equation}\label{1.7}
{\mathbf H}_{2N-L,L+1} = {\mathbf U}_{2N-L} \, {\mathbf D}_{2N-L,M} {\mathbf W}_{M,L+1}, 
\end{equation}
where the (last) $L+1-M$ zero-columns in ${\mathbf D}_{2N-L,L+1}$ and the last $L+1-M$ rows in ${\mathbf W}_{L+1}$ are removed.
Similarly as before, the SVDs of ${\mathbf H}_{2N-L,L}(0)$ and ${\mathbf H}_{2N-L,L}(1)$ can be directly  derived  from (\ref{1.7})  and the matrix pencil in (\ref{1.6}) now reads
\begin{eqnarray*}
 & & z \left(  {\mathbf U}_{2N-L} \, {\mathbf D}_{2N-L,M} {\mathbf W}_{M,L+1} \right)(1:2N-L,1:L)  \\
 & &  \hspace{5cm} -  \left(  {\mathbf U}_{2N-L} \, {\mathbf D}_{2N-L,M} {\mathbf W}_{M,L+1} \right)(1:2N-L,2:L+1),
\end{eqnarray*}
or equivalently
$$ z \left(   {\mathbf D}_{2N-L,M} {\mathbf W}_{M,L+1} \right)(1:2N-L,1:L) -  \left(  {\mathbf D}_{2N-L,M} {\mathbf W}_{M,L+1} \right)(1:2N-L,2:L+1). $$
Multiplication from the left with $\left( \mathrm{diag}( \sigma_{j}^{-1} )_{j=1}^{M} , {\mathbf 0} \right) \in {\mathbb R}^{M \times 2N-L}$  yields
the matrix pencil
$$ z \,  {\mathbf W}_{M,L+1}(1:M,1:L) - {\mathbf W}_{M,L+1}(1:M,2:L+1). $$
Again the wanted parameters $z_{j}$ are generalized eigenvalues of this matrix pencil problem.
The ``rotational invariance'' that inspires the name of the  ESPRIT method is best observed in (\ref{extra})--(\ref{extra1}), where the factorizations of ${\mathbf H}_{2N-L,L}(0)$ and ${\mathbf H}_{2N-L,L}(1)$ differ by the diagonal 
 matrix $\mathrm{diag}  \left(z_{j} \right)_{j=1}^{M}$, which are rotation factors if $|z_{j}|=1$, i.e., if $z_{j}= {\mathrm e}^{{\mathrm i} \phi_{j}}$ with frequency parameters $\phi_{j} \in {\mathbb R}$. The ESPRIT algorithm is summarized in Algorithm \ref{esprit_algo}.
 The numerical effort of the ESPRIT algorithm is comparable to that of MPM and is governed by the SVD of ${\mathbf H}_{2N-L,L+1}$ with $\mathcal{O}(N\, L^{2})$ operations.  Note that the computational cost of ESPRIT can be improved by using a partial SVD and fast
Hankel matrix-vector multiplications, see \cite{PT15,HMT2011}. 
However, in this case one needs some prior knowledge about the wanted rank $M$ of ${\mathbf H}_{2N-L,L+1}$, which is not given here.
 
\begin{algorithm}[t]\caption{ESPRIT algorithm}\label{esprit_algo}
\label{algESPRIT}
\small{
\textbf{Input:} ${\mathbf f} = (f_{k})_{k=0}^{2N-1} = \big(f(k)\big)_{k=0}^{2N-1}$ (equidistant sampling values of $f$ in (\ref{1.1}))\\
\phantom{\textbf{Input:}} $L$, upper bound  with $M \le L < N$\\
\phantom{\textbf{Input:}} accuracy $\epsilon >0$

\begin{enumerate}
\item Build the Hankel matrix ${\mathbf H}_{2N-L, L+1} \coloneqq \left( f_{\ell+m} \right)_{\ell,m=0}^{2N-L-1,L}$ and 
compute  the SVD  of ${\mathbf H}_{2N-L, L+1}$ in (\ref{SVD}).
Determine the numerical rank $M$ of ${\mathbf H}_{2N-L, L+1}$ by taking the smallest $M$ such that 
$ \sigma_{M+1} < \epsilon \, \sigma_{1}$, where $\sigma_{j}$ are the ordered diagonal entries of ${\mathbf D}_{2N-L,L+1}$ with $\sigma_{1} \ge \sigma_{2} \ge \ldots \geq \sigma_{L+1}$.
\item Form
 ${\mathbf W}_{M,L}(0) \coloneqq {\mathbf W}_{L+1}(1\!:\!M,1\!:\!L)$, 
  ${\mathbf W}_{M,L}(1) \coloneqq {\mathbf W}_{L+1}(1\!:\!M,2\!:\!L+1)$,
and determine the vector of eigenvalues ${\mathbf z} =(z_{1}, \ldots , z_{M})^{T}$ of 
$\left(  {\mathbf W}_{M,L}(0)^{T} \right)^{\dagger}  {\mathbf W}_{M,L}(1)^{T}, $
where $\left(  {\mathbf W}_{M,L}(0)^{T} \right)^{\dagger}$ denotes the Moore-Penrose inverse of  ${\mathbf W}_{M,L}(0)^{T} $.
\item Compute ${\bgamma} = (\gamma_{j})_{j=1}^{M}$  as the least squares solution of the linear system 
$ {\mathbf V}_{2N,M} ({\mathbf z}) \, \bgamma = {\mathbf f}. $
\end{enumerate}

\noindent
\textbf{Output:} $M \in {\mathbb N}$,  $z_{j}, \, \gamma_{j} \in {\mathbb C}$, $j=1, \ldots , M$.}
\end{algorithm}

\section{ESPIRA Algorithm}

\subsection{The Basic ESPIRA Algorithm}

We want  to propose a different algorithm  for parameter estimation  from the given samples $f_{\ell}$, $\ell=0, \ldots , 2N-1$, of $f$   in (\ref{1.1}), the \textbf{E}stimation of \textbf{S}ignal \textbf{P}arameters  by \textbf{I}terative \textbf{R}ational \textbf{A}pproximation (ESPIRA).
Our approach uses the close connection between the parameter estimation problem  and rational approximation.
For the vector ${\mathbf f} = (f_{\ell})_{\ell=0}^{2N-1}$ of given equidistant samples we compute the discrete Fourier transform (DFT)  $\hat{\mathbf f} = (\hat{f}_{k})_{k=0}^{2N-1}$, 
\begin{equation}\label{1.8}
\hat{f}_k\coloneqq\sum\limits_{\ell=0}^{2N-1} f_\ell  \, \omega_{2N}^{k\ell}, \qquad k=0,1,\ldots, 2N-1,
\end{equation}
where $ \omega_{2N} \coloneqq {\mathrm e}^{-2\pi {\mathrm i}/(2N)}$.
Then we  observe that
\begin{eqnarray} \nonumber
\hat{f}_k&=&  \sum_{\ell=0}^{2N-1} \left( \sum_{j=1}^{M} \gamma_{j} \, z_{j}^{\ell} \right) \, \omega_{2N}^{k\ell}= \sum_{j=1}^{M} \gamma_{j}  \sum_{\ell=0}^{2N-1} z_{j}^{\ell}  \, \omega_{2N}^{k\ell}\\
\label{1.9}
&=& \sum_{j=1}^{M} \gamma_{j} \left( \frac{1- (\omega_{2N}^{k}z_{j})^{2N}}{1-\omega_{2N}^{k}z_{j}} \right) = \omega_{2N}^{-k} \, \sum_{j=1}^{M} \gamma_{j} \left( \frac{1- z_{j}^{2N}}{ \omega_{2N}^{-k} - z_{j}} \right).
\end{eqnarray}
The representation of $\hat{f}_{k}$ in (\ref{1.9}) is  well-defined if all knots $z_{j}$, $j=1, \ldots , M$, satisfy  $z_{j}^{2N} \neq 1$. 
For $z_{j} = \omega_{2N}^{-k} $, the rule of L'Hospital yields 
\begin{equation}\label{lop} 
\lim_{ z_{j} \to \omega_{2N}^{-k} } \frac{\omega_{2N}^{-k} \, (1- z_{j}^{2N})}{\omega_{2N}^{-k} -z_{j}} = 2N. 
\end{equation}
Let us first assume that the wanted reconstruction parameters  $z_j$ in (\ref{1.1}) satisfy
\begin{equation}\label{1.91} 
z_{j}^{2N} \neq 1, \qquad j=1, \ldots , M.
\end{equation}
The case that some (or all)  $z_{j}$ satisfy $z_{j}^{2N} = 1$ is treated later separately.
With (\ref{1.91}) we obtain from (\ref{1.9}) 
\begin{equation} \label{1.11}
\omega_{2N}^{k} \, \hat{f}_{k} = \sum_{j=1}^{M} \gamma_{j} \frac{1-z_{j}^{2N}}{\omega_{2N}^{-k}-z_{j}}, \qquad k=0, \ldots , 2N-1. 
\end{equation}

Now, the problem of parameter estimation can be rephrased as  a rational interpolation  problem. Setting $a_{j} \coloneqq \gamma_{j} \, (1 -z_{j}^{2N})$, we consider the rational function
\begin{equation}\label{rm}
 r_{M}(z) \coloneqq \sum_{j=1}^{M} \frac{a_{j}}{z-z_{j}} 
\end{equation}
of type $(M-1,M)$ and use the interpolation conditions 
\begin{equation}\label{3.int}
 r_{M}(\omega_{2N}^{-k}) = \sum_{j=1}^{M} \gamma_{j} \frac{1-z_{j}^{2N}}{\omega_{2N}^{-k}-z_{j}} = \omega_{2N}^{k} \, \hat{f}_{k}, \qquad  k=0, \ldots , 2N-1, 
 \end{equation}
to determine $r_{M}(z)$ and thus the parameters $M$, $a_{j}$, $z_{j}$, $j=1, \ldots , M$, of its fractional decomposition.

Note that  the structure of $\omega_{2N}^{k} \, \hat{f}_{k}$ in (\ref{1.11}) implies that the data  cannot be interpolated by a rational function of smaller type than $(M-1,M)$. Therefore, $r_{M}(z)$ of type $(M-1,M)$ is uniquely determined by the given interpolation conditions, since $N > M$.

We obtain the pseudocode for the ESPIRA-I algorithm in Algorithm \ref{alg1}.
This algorithm is based  on the AAA algorithm in \cite{NST18}. All steps of Algorithm \ref{alg1} will be studied in more detail in the next subsection. Since computation of DFT requires ${\mathcal O}(N\, \log N)$ operations and the complexity of Algorithm \ref{alg2} (the AAA algorithm) is  ${\mathcal O}(N\, M^{3})$, as Algorithm \ref{alg2} involves SVDs of Loewner matrices of dimension $(N-j) \times j$ for $j=1, \ldots, M+1$, the overall computational costs of Algorithm \ref{alg1} is ${\mathcal O}(N\, (M^{3}+\log N))$.


\begin{algorithm}[ht]\caption{ESPIRA-I}
\label{alg1}
\small{
\textbf{Input:} ${\mathbf f} = (f_{k})_{k=0}^{2N-1} = \big(f(k)\big)_{k=0}^{2N-1}$ (equidistant sampling values of $f$ in (\ref{1.1}))\\
\phantom{\textbf{Input:}} ${tol}>0$ tolerance for the approximation error

\begin{enumerate}
\item Compute the DFT-vector $\hat{\mathbf f}= (\hat{f}_{k})_{k=0}^{2N-1}$ with $\hat{f}_{k} = \sum_{j=0}^{2N-1} f_{j} \, \omega_{2N}^{kj}$ of ${\mathbf f}$ (with $\omega_{2N} \coloneqq {\mathrm e}^{-2\pi {\mathrm i}/2N}$).
\item Use Algorithm \ref{alg2} to compute a rational function $r_{M}(z)$ of (smallest possible) type $(M-1,M)$ $(M \le N)$, such that 
 $$ |r_{M}(\omega_{2N}^{-k}) -\omega_{2N}^{k} \, \hat{f}_{k}| < tol, \qquad k=0, \ldots , 2N-1.$$ 
\item Use Algorithm \ref{alg3} to compute a fractional decomposition representation of $r_{M}(z)$, 
$$ r_{M}(z) = \sum_{j=1}^{M} \frac{a_{j}}{z - z_{j}}, $$
i.e., compute $a_{j}$, $z_{j}$, $j=1, \ldots , M$
and set  $\gamma_{j} \coloneqq \frac{a_{j}}{1-z_{j}^{2N}}$, $j=1, \ldots, M$.
\end{enumerate}

\noindent
\textbf{Output:} $M$, $\gamma_{j}$, $z_{j}$, for $j=1, \ldots , M$ (all parameters of $f$).}
\end{algorithm}

\begin{remark}
The idea of ESPIRA can be seen as a discrete analog of the approaches in \cite{DP21} and \cite{PPD21}, where a reconstruction of exponential sums via rational interpolation has been proposed, using a set of given Fourier coefficients of $f$ corresponding to a Fourier expansion in a given finite interval. ESPIRA has now the advantage that it uses only discrete Fourier coefficients which can be directly computed from a vector of given equidistant function values of $f$.
\end{remark}

\subsection{The AAA Algorithm for rational approximation}
\label{aaa-algorithm}

We will employ the recently proposed AAA algorithm \cite{NST18} for rational interpolation, which possesses  a stable numerical performance  due to a barycentric  representation  of the rational interpolant using an adaptively chosen subset of interpolation points.

The AAA algorithm with the implementation in \cite{NST18} can be directly  applied for our  purpose. It  computes  rational functions
 $r_{J}(z)$ of type $(J,J)$ for $J=1, 2,\ldots $ thereby including  $J+1$ interpolation conditions  directly, while the remaining  interpolation  data are approximated  by solving a linearized least squares problem. Since the type of the rational interpolating function $r_{M}$ is in our case $(M-1,M)$ by (\ref{1.11}), the AAA algorithm will provide the correct result.
In case of perturbed data we enforce the condition that  $r_{M}$ is of type $(M-1,M)$ in step 3 of Algorithm \ref{alg1} by fixing the structure of $r_{M}(z)$.

We will explain the AAA algorithm for our recovery problem at hand in detail, 
since the occurring structured matrices in this algorithm play a key role when we compare ESPIRA with  MPM and  ESPRIT.
We refer to \cite{NST18} for the original AAA algorithm.
\medskip

Let $I \coloneqq \{0, \ldots , 2N-1\}$ be the index set, $\Gamma =\{ \omega_{2N}^{-k}: \,  k \in I \}$  the set of \textit{support points}, and $\{ \omega_{2N}^{k} \hat{f}_{k}: \,  {k \in I} \}$ the corresponding set of known function values.
Our task is to find the rational function $r_{M}(z)$ of type $(M, M)$ such that $r_{M}(\omega_{2N}^{-k}) = \omega_{2N}^{k} \hat{f}_{k}$ for $k \in I$, where $N$ is an upper bound of the unknown degree $M$.
(As we will show in Section \ref{sec:conv}, the type of the resulting function $r_{M}(z)$ will indeed be $(M-1, M)$ for exact data.)

At the iteration step $J \ge 1$, we proceed as follows to compute a rational function $r_{J-1}$ of type $(J-1,J-1)$, see also \cite{NST18,DP21,PPD21}.
For $J>1$,  we have already a given partition $S_{J} \cup \Gamma_{J} = I$ of the index set from the previous iteration step, with $\# S_{J} = J$ and $\# \Gamma_{J} = 2N-J$,  which has been found by an adaptive (greedy) procedure.
For $J=1$, we start with the initialization $S_{1} \coloneqq \{ \argmax_{\ell\in I} |\hat{f}_{\ell}| \}$ and $\Gamma_{1} \coloneqq I \setminus S_{1}$.
 We require now that  $r_{J-1}$  satisfies the interpolation conditions 
$r_{J-1}(\omega_{2N}^{-k}) = \omega_{2N}^{k} \, \hat{f}_{k}$ for $k \in S_{J}$,  while $r_{J-1}(\omega_{2N}^{-k})$ approximates the data $\omega_{2N}^{k} \, \hat{f}_{k}$ for $k \in \Gamma_{J}$.
The rational function $r_{J-1}(z)$ will be constructed in a  barycentric form $r_{J-1}(z) \coloneqq \frac{\tilde{p}_{J-1}(z)}{\tilde{q}_{J-1}(z)}$ with 
\begin{equation}\label{bar-form}
\tilde{p}_{J-1}(z) \coloneqq \sum_{k \in S_{J}} \frac{w_k \, (\omega_{2N}^{k} \, \hat{f}_{k})}{z- \omega_{2N}^{-k}}, \qquad  \tilde{q}_{J-1}(z) \coloneqq \sum_{k \in S_{J}} \frac{w_k}{z- \omega_{2N}^{-k}}, 
\end{equation}
where $w_k \in {\mathbb C}$, $k \in S_{J}$, are weights. The representation (\ref{bar-form}) already implies that  the interpolation conditions $r_{J-1}(\omega_{2N}^{-k})  = \omega_{2N}^{k} \, \hat{f}_{k}$ are satisfied for $w_{k} \neq 0$, $k \in S_{J}$.
This can be simply  observed from (\ref{bar-form}) by multiplying $\tilde{p}_{J-1}(z)$ and $\tilde{q}_{J-1}(z)$ with $\prod\limits_{k \in S_{J}}(z-\omega_{2N}^{-k})$.

The weight vector ${\mathbf w} \coloneqq \left( w_k \right)_{k \in S_{J}}$ is now chosen such that $r_{J-1}(z)$ approximates the remaining data and $\| {\mathbf w} \|_2^2 = \sum_{k \in S_{J}} w_k^2 = 1$.
To compute ${\mathbf w}$, we consider the restricted least-squares problem obtained by linearizing the interpolation conditions for $\ell \in \Gamma_{J}$,  
\begin{equation}\label{mini}
\min_{\mathbf w} \sum_{\ell \in \Gamma_{J}} \left|(\omega_{2N}^{\ell} \, \hat{f}_{\ell}) \, \tilde{q}_{J-1}(\omega_{2N}^{-\ell})-\tilde{p}_{J-1}(\omega_{2N}^{-\ell})\right|^2, \quad \textrm{such~that} \quad  \| {\mathbf w} \|_2^2 = 1.
\end{equation}
We define the Loewner matrix
$$ 
{\mathbf L}_{2N-J,J} \coloneqq \left( \frac{\omega_{2N}^{\ell} \, \hat{f}_{\ell} - \omega_{2N}^{k} \, \hat{f}_{k}}{\omega_{2N}^{-\ell}- \omega_{2N}^{-k}} \right)_{\ell \in \Gamma_{J}, k \in S_{J}},
$$
and rewrite the term in (\ref{mini}) as
\begin{eqnarray*}
\sum_{\ell \in \Gamma_{J}} \left| (\omega_{2N}^{\ell} \, \hat{f}_{\ell}) \, \tilde{q}_{J-1}(\omega_{2N}^{-\ell})-\tilde{p}_{J-1}(\omega_{2N}^{-\ell})\right|^2 &=& \sum_{\ell \in \Gamma_{J}} \left|       {{\mathbf w}}^T \, \left( 
\frac{\omega_{2N}^{\ell} \, \hat{f}_{\ell}- \omega_{2N}^{k} \, \hat{f}_{k}}{\omega_{2N}^{-\ell}-\omega_{2N}^{-k}} \right)_{k \in S_{J}} \right|^2 \\
&=& \| {\mathbf L}_{2N-J,J} {{\mathbf w}} \|_2^2. 
\end{eqnarray*}
Thus,  the minimization problem in (\ref{mini}) takes the form
\begin{equation}\label{mini1}
\min_{\|{\mathbf w}\|_{2} = 1 }\| {\mathbf L}_{2N-J,J} {{\mathbf w}} \|_2^2 ,
\end{equation}
and the solution vector ${\mathbf w} \in {\mathbb C}^{J}$ is the right singular vector  corresponding to the smallest singular value of ${\mathbf L}_{2N-J,J}$.
Having determined the weight vector ${\mathbf w}$, the rational function $r_{J-1}$ is completely fixed by (\ref{bar-form}).
Finally, we  consider the errors $|r_{J-1}(\omega_{2N}^{-\ell}) - \omega_{2N}^{\ell} \, \hat{f}_{\ell}|$ for all $\ell \in \Gamma_{J}$, where we do not interpolate.
The algorithm terminates if $\max_{\ell \in \Gamma_{J}} | r_{J-1}(\omega_{2N}^{-\ell}) - \omega_{2N}^{\ell} \, \hat{f}_{\ell}| < \epsilon$ for a predetermined bound $\epsilon$ or if $J$ reaches a predetermined maximal degree. Otherwise, we find the updated index set 
$$ S_{J+1} \coloneqq S_{J} \cup  \argmax_{\ell \in \Gamma_{J}} | r_{J-1}(\omega_{2N}^{-\ell}) - \omega_{2N}^{\ell} \, \hat{f}_{\ell}| $$
and update $\Gamma_{J+1} = I \setminus S_{J+1}$.
\medskip

\begin{algorithm}[ht]\caption{Iterative rational approximation by  AAA algorithm \cite{NST18}} 
\label{alg2} 
\small{
\textbf{Input:} 
$\hat{\mathbf f} \in {\mathbb C}^{2N}$, DFT of ${\mathbf f}$ \\
\phantom{\textbf{Input:}} ${tol}>0$ tolerance for the approximation error \\ 
\phantom{\textbf{Input:}} $jmax \in \nn$ with $\textit{jmax} < N $ maximal order of polynomials in the rational function

\noindent
\textbf{Initialization:}\\
Set the ordered index set  ${\mathbf \Gamma} \coloneqq \left( k \right)_{k=0}^{2N-1}$.
Compute ${\mathbf g}_{{\mathbf \Gamma}} = (g_{k})_{k=0}^{2N-1}\coloneqq \left( \omega_{2N}^{k} \, \hat{f}_{k} \right)_{k=0}^{2N-1}$.\\

\noindent
\textbf{Main Loop:}

\noindent
for $j=1:$ \textit{jmax}
\begin{enumerate}
\item If $j=1$, choose ${\mathbf S}\coloneqq (k_{1})$, $\mathbf{g}_{\mathbf{S}} \coloneqq (g_{k_{1}})$, 
where $k_{1} \coloneqq \argmax_{\ell \in \mathbf{\Gamma}} |g_{\ell}|$; update $\mathbf{\Gamma}$ and $\mathbf{g}_{{\mathbf \Gamma}}$ by deleting $k_{1}$ in $\mathbf{\Gamma}$ and $g_{k_{1}}$ in $\mathbf{g}_{{\mathbf \Gamma}}$.\\
 If $ j>1$, compute $k \coloneqq \argmax_{\ell \in \mathbf{\Gamma}} | r_{\ell} - g_{\ell}|$; update $\mathbf{S}$, $\mathbf{g}_{\mathbf{S}}$, $\mathbf{\Gamma}$ and $\mathbf{g}_{{\mathbf \Gamma}}$ by adding $k$ to $\mathbf{S}$ and deleting  $k$ in $\mathbf{\Gamma}$, adding $g_{k}$ to $\mathbf{g}_{\mathbf{S}}$ and deleting it in $\mathbf{g}_{{\mathbf \Gamma}}$.
\item Build  $\mathbf{C}_{2N-j,j}\!\coloneqq\!\left( \frac{1}{\omega_{2N}^{-\ell}-\omega_{2N}^{-k}} \right)_{\ell \in \mathbf{\Gamma}, k \in \mathbf{S}}$, ${\mathbf L}_{2N-j,j}\!\coloneqq\! \left( \frac{g_{\ell} - g_{k}}{\omega_{2N}^{-\ell}-\omega_{2N}^{-k}} \right)_{\ell \in \mathbf{\Gamma}, k \in \mathbf{S}}$.
\item Compute the normalized singular vector ${\mathbf w}$
 corresponding to the smallest singular value of $\mathbf{L}_{2N-j,j}$.
\item Compute $\mathbf{p}\coloneqq\mathbf{C}_{2N-j,j} ({\mathbf w}.\ast \mathbf{g}_{\mathbf{S}})$, $\mathbf{q} \coloneqq \mathbf{C}_{2N-j,j} {\mathbf w}$ and $\mathbf{r}\coloneqq \mathbf{p}./\mathbf{q} \in \cc^{2N-j}$,  where $.*$ denotes componentwise multiplication and $./$ componentwise division.
\item If $\| \mathbf{r}-\mathbf{g}_{{\mathbf \Gamma}} \|_\infty<$ \textit{tol}  then set $M\coloneqq j-1$ and stop.
\end{enumerate}
end (for)\\
\textbf{Output: } \\ 
$M=j-1$, where $(M,M)$ is the type of the rational function $r_{M}$ \\
$\mathbf{S} \in {\mathbb Z}^{M+1}$ determining the index set $S_{M+1}$ where $r_{M}$ satisfies the interpolation conditions \\
  $\mathbf{g}_{\mathbf{S}}= (g_{k})_{k \in S_{M+1}} \in \cc^{M+1}$ is the vector of the corresponding interpolation values\\
   ${\mathbf w}=(w_k)_{k \in S_{M+1}} \in \cc^{M+1}$ is the weight vector. }

\end{algorithm}

Algorithm \ref{alg2} provides the rational function $r_M(z)$ in a barycentric form 
$r_{M}(z)  = \frac{\tilde{p}_{M}(z)}{\tilde{q}_{M}(z)}$ with 
\begin{equation}\label{bar-form-1}
\tilde{p}_{M}(z) \coloneqq \sum_{k \in S_{M+1}} \frac{w_k \, (\omega_{2N}^{k} \, \hat{f}_{k})}{z-\omega_{2N}^{-k}}, \qquad  \tilde{q}_{M}(z) \coloneqq \sum_{k \in S_{M+1}} \frac{w_k}{z-\omega_{2N}^{-k}},
\end{equation}
which are determined by the output parameters of this algorithm. Note that it is important to take the occurring index sets and data sets in Algorithm  \ref{alg2}  as ordered sets, therefore they are given as vectors ${\mathbf S}$, ${\mathbf \Gamma}$, ${\mathbf g}_{\mathbf S}$ and ${\mathbf g}_{\mathbf \Gamma}$, as in the original algorithm, \cite{NST18}.

\subsection{Partial fraction decomposition}

In order to rewrite $r_M(z)$ in (\ref{bar-form-1}) in the form  of a partial fraction decomposition,
\begin{equation}\label{rn1} 
r_{M}(z) = \sum_{j=1}^{M} \frac{a_{j}}{z-z_{j}},
\end{equation}
we need to determine $a_1, \ldots , a_M$ and $z_1, \ldots , z_M$ from the output of Algorithm \ref{alg2}.

The zeros of the denominator $\tilde{q}_{M}(z)$ are the poles $z_j$ of $r_M(z)$ and can be computed by solving an $(M+2)\times (M+2)$ generalized eigenvalue problem (see \cite{NST18} or \cite{PPD21}), that has for $S_{M+1}=\{k_{1}, \ldots,  k_{M+1} \}$ the form 
\begin{equation}\label{eig} \left( \begin{array}{ccccc}
0 & w_{k_{1}} & w_{k_{2}} & \ldots & w_{k_{M+1}} \\
1 & \omega_{2N}^{-k_{1}} &   &     & \\
1 & & \omega_{2N}^{-k_{2}} & & \\
\vdots & & & \ddots & \\
1 & & & & \omega_{2N}^{-k_{M+1}} \end{array} \right) \, {\mathbf v}_{z}= z \left( \begin{array}{ccccc}
0 & & & & \\
& 1 & & & \\
& & 1 & & \\
& & & \ddots & \\
& & & & 1 \end{array} \right) \, {\mathbf v}_{z}.
\end{equation}
Two eigenvalues of this generalized eigenvalue problem are infinite and the other $M$ eigenvalues are the wanted zeros $z_j$ of $\tilde{q}_{M}(z)$ (see \cite{
Klein,NST18,PPD21} for more detailed explanation). 
We apply Algorithm \ref{alg3} to the output of Algorithm \ref{alg2}.

\begin{algorithm}[ht]\caption{Reconstruction of parameters $a_j$ and $z_j$ of partial fraction representation}
 \label{alg3}
\small{
\textbf{Input:} $\hat{\mathbf f} \in {\mathbb C}^{2N}$ DFT of ${\mathbf f}$ \\
\phantom{\textbf{Input:}} $\mathbf{S}\in \cc^{M+1}$,
  ${\mathbf{g}}_{\mathbf{S}} \in \cc^{M+1}$,
   ${\mathbf w} \in \cc^{M+1}$ the output vectors of Algorithm \ref{alg2}

\begin{enumerate}
\item Build the matrices in (\ref{eig}) and solve this eigenvalue problem to find the  vector ${\mathbf z}^{T}=(z_1, \ldots, z_M)^{T}$ of the $M$ finite eigenvalues;

\item Build the matrix $\mathbf{C}_{2N,M}=\left(\frac{1}{\omega_{2N}^{-k}-z_j} \right)_{k =0,  j=1}^{2N-1,M}\in \cc^{2N\times M} $ and compute the least squares solution of  the linear system
$$
\mathbf{C}_{2N,M} \mathbf{a}= \left( \omega_{2N}^{k} \hat{f}_{k} \right)_{k=0}^{2N-1} .
$$
\end{enumerate}
\textbf{Output: } Parameter vectors ${\mathbf z}= (z_j)_{j=1}^{M}$, ${\mathbf a} = (a_j)_{j=1}^{M}$ determining $r_M(z)$ in (\ref{rn1}). }
\end{algorithm}

\begin{remark} \label{rem3}
1.  Note that the rational function $r_{M}= \frac{p_{M}}{q_{M}}$ obtained by Algorithm \ref{alg2} is of type $(M,M)$, where by construction, the denominator polynomial $q_{M}$ has exact degree $M$, and the numerator polynomial  $p_{M}$ has at most degree $M$. 
By fixing the Cauchy matrix ${\mathbf C}_{2N,M}$ as given in step 2 of Algorithm \ref{alg3}, we incorporate the wanted type $(M-1,M)$ of the rational interpolant $r_{M}$.  This step provides the parameters $a_{j}$ that are related to   $\gamma_{j}$ in (\ref{1.1}) by $a_{j} = \gamma_{j}(1-z_{j}^{2N}) $ for $z_{j}^{2N} \neq 1$.

Instead of using Algorithm 4, we  could also apply a slightly modified AAA algorithm, where a side condition is incorporated in each iteration step that forces a rational function $r_{J-1}$ of type $(J-2,J-1)$, see \cite{NST18}, Section 9.
Such a modified AAA algorithm has also been  used in \cite{DP21} and \cite{PPD21}. In our setting, the modification reads as follows: At each iteration step $J>1$ of Algorithm \ref{alg2}, 
we  compute the vector of weights $\mathbf{w}$ by solving the least-squares problem (\ref{mini}) or (\ref{mini1}) with one more side condition,
\begin{equation}\label{side}
{\mathbf w}^T  \,  \left(\omega_{2N}^{k} \hat{f}_k\right)_{k\in S_{J}}= \sum_{k \in S_J} w_k \, (\omega_{2N}^{k} \hat{f}_k) = 0,
\end{equation}
ensuring that $r_{J-1}$ is of type $(J-2, J-1)$. 
Practically, to compute the solution vector ${\mathbf w} = {\mathbf w}_{J}$  of  (\ref{mini1}) approximately, we compute the right (normalized) singular vectors ${\mathbf v}_J$ and  ${\mathbf v}_{J-1}$ of the matrix ${\mathbf L}_{2N-J,J}$ corresponding to the two smallest singular values $\sigma_J \leq \sigma_{J-1}$ of ${\mathbf L}_{2N-J,J}$ and take 
$$
{\mathbf w}_{J} \coloneqq \frac{\left( \left({\mathbf v}_{J-1}^{T}  \left(\omega_{2N}^{k} \hat{f}_k\right)_{k\in S_{J}} \right){\mathbf v}_J - \left({\mathbf v}_J^{T} \left(\omega_{2N}^{k} \hat{f}_k\right)_{k\in S_{J}}  \right){\mathbf v}_{J-1} \right)}{\sqrt{\left({\mathbf v}_J^{T}  \left(\omega_{2N}^{k} \hat{f}_k\right)_{k\in S_{J}}    \right)^{2}+\left({\mathbf v}_{J-1}^{T}   \left(\omega_{2N}^{k} \hat{f}_k\right)_{k\in S_{J}} \right)^{2}}} .
$$
Then ${\mathbf w}_{J}$ satisfies (\ref{side}), and we have $\|{\mathbf w}_{J}\|_{2} = 1$ as well as $\| {\mathbf L}_{2N-J,J} \, {\mathbf w}_{J} \|_{2} \le \sigma_{J-1}$.

We will show in Theorem \ref{AAAconv} that 
  in case of exact data a rational function $r_M$ constructed in Algorithm \ref{alg1} has already type $(M-1,M)$, therefore,  this modification of the AAA algorithm is not needed. 
 For noisy data and for function approximation, the modified AAA algorithm may lead to a different greedy choice of interpolation indices compared to Algorithm \ref{alg2}. However, our numerical experiments with the modified AAA algorithm did not yield improved recovery results compared  to the original AAA algorithm (Algorithm \ref{alg2}).

3. Instead of solving ${\mathbf C}_{2N,M} \, {\mathbf a} = \left( \omega_{2N}^{k} \hat{f}_{k} \right)_{k=0}^{2N-1}$  in the second step of Algorithm \ref{alg3}
we could also directly solve
${\mathbf V}_{2N,M} \, {\bgamma} = {\mathbf f}$ as in step 3 of Algorithm \ref{algMPM} and \ref{algESPRIT}.
Indeed, with the Fourier matrix ${\mathbf F}_{2N} = (\omega_{2N}^{jk})_{j,k=0}^{2N-1}$ it follows that the system
 ${\mathbf V}_{2N,M} \, {\bgamma} = {\mathbf f}$ is equivalent to 
 $$\textrm{diag}   \Big((\omega_{2N}^{k})_{k=0}^{2N-1}\Big) \, {\mathbf F}_{2N} \, {\mathbf V}_{2N,M} \, {\bgamma} = \textrm{diag}  \Big((\omega_{2N}^{k})_{k=0}^{2N-1}\Big) \, {\mathbf F}_{2N} \, {\mathbf f} = \textrm{diag}   \Big((\omega_{2N}^{k})_{k=0}^{2N-1}\Big)  \, \hat{\mathbf f}, $$ 
 and it can be simply verified that 
 $$\textrm{diag}   \Big((\omega_{2N}^{k})_{k=0}^{2N-1}\Big)  \, {\mathbf F}_{2N} \, {\mathbf V}_{2N,M} = {\mathbf C}_{2N,M} \, \textrm{diag}  \Big(( 1-z_{j}^{2N})_{j=1}^{M} \Big).$$
\end{remark}

\subsection{Recovery of parameters $z_{j} = \omega_{2N}^{k}$}
\label{secper}

Assume now that the function $f$ in (\ref{1.1}) also contains parameters $z_{j}$ with $z_{j}^{2N} = 1$.  Then we can write $f(t)$ as a sum
$f(t) = f_{1}(t)
+ f_{2}(t)$, where 
\begin{equation}\label{f1} f_{1}(t) = \sum_{j=1}^{M_{1}} \gamma_{j} \, z_{j}^{t}, \qquad \{z_{j}: \, j=1, \ldots , M_{1}\} \cap \{\omega_{2N}^{-k}: \, k=0, \ldots , 2N-1\} = \emptyset, 
\end{equation}
and 
\begin{equation}\label{f2}  f_{2}(t) = \sum_{j=M_{1}+1}^{M} \gamma_{j} \, z_{j}^{t}, \qquad \{z_{j}: \, j=M_{1}+1, \ldots , M\} \subset \{\omega_{2N}^{-k}: \, k=0, \ldots , 2N-1\}. \end{equation}
Let  $\hat{\mathbf f}_{1} = \big((\hat{f}_{1})_{k}\big)_{k=0}^{2N-1}$  and $\hat{\mathbf f}_{2} = \big((\hat{f}_{2})_{k}\big)_{k=0}^{2N-1}$ 
denote the DFTs of $\big(f_{1}(j)\big)_{j=0}^{2N-1}$ and $\big(f_{2}(j)\big)_{j=0}^{2N-1}$, respectively. Then it follows as in (\ref{1.11}) that 
$$ \omega_{2N}^{k} (\hat{f}_{1})_{k} = \sum_{j=1}^{M_{1}} \gamma_{j} \, \frac{1-z_{j}^{2N}}{\omega_{2N}^{-k} - z_{j}}, \qquad  k=0, \ldots , 2N-1,$$
while
\begin{equation}\label{f1c}
\omega_{2N}^{k} (\hat{f}_{2})_{k} = \sum_{j=M_{1}+1}^{M} \gamma_{j}  \sum_{\ell=0}^{2N-1} (z_{j}w_{2N}^{k})^{\ell}= \left\{ \begin{array}{ll}
 2N \, \gamma_{j} 
 & z_{j} = \omega_{2N}^{-k}, \\
 0 & \omega_{2N}^{-k} \not\in \{ z_{M_{1}+1}, \ldots , z_{M}\}.
 \end{array} \right.
\end{equation}
Hence, we have $ \omega_{2N}^{k} \, \hat{f}_{k} = \omega_{2N}^{k} (\hat{f}_{1})_{k}$ for all $k \in I$ with $\omega_{2N}^{-k} \not\in \{ z_{M_{1}+1}, \ldots , z_{M}\}$, while only $M-M_{1}$ DFT coefficients $\hat{f}_{k}$ do not possess the fractional structure.

We can therefore apply a modification of Algorithm \ref{alg2} as follows. 
We apply  the iteration process in Algorithm \ref{alg2} as before and iteratively find sets of indices $S_{J}$, $J=1,2,\ldots$.
As we will show in Theorem \ref{corconv},  we obtain after $M+1$ iteration steps a kernel vector ${\mathbf w} \in {\mathbb C}^{M+1}$ of the Loewner matrix in the main loop of Algorithm \ref{alg2}.  This can be simply recognized by checking the occurring smallest singular value.
The remaining part of Algorithm \ref{alg2} is now  modified as follows. We store the set  $\Sigma^{(1)}$ of indices $j$ corresponding to zero components of ${\mathbf w}$ (i.e., with $|w_{j}| \le \epsilon$).  Further, we consider the vector ${\mathbf r}-{\mathbf g}_{{\mathbf \Gamma} 
}$ and store the the set $\Sigma^{(2)}$ of indices of components with amplitude larger than $tol$.
Then $\Sigma= \Sigma^{(1)} \cup \Sigma^{(2)}$ is the wanted index set that provides the knots $\{z_{j}: \, j=M_{1}+1, \ldots , M \}$ of the form $z_{j}= \omega_{2N}^{-k}$, $k \in \Sigma$, for $f_{2}(t)$.

Furthermore, ${\mathbf w}$, ${\mathbf S}$ and ${\mathbf g}_{{\mathbf S}}$ provide as before a rational function $r_{M_{1}}(z)$ of type $(M_{1}-1, M_{1})$ after removing zero components of ${\mathbf w}$ and possibly occurring Froissart doublets. 
This rational function satisfies the interpolation conditions
$r_{M_{1}}(\omega_{2N}^{-k}) = \omega_{2N}^{k} (\hat{f}_{1})_{k}$, and we can recover all parameters of $f_{1}(t)$.
Finally, the coefficients $\gamma_{j}$ of $f_{2}$ are obtained via $\omega_{2N}^{k} (\hat{f}_{2})_{k} = \omega_{2N}^{k} \hat{f}_{k} - r_{M_{1}}(\omega_{2N}^{-k})$ for $k \in \Sigma$ with (\ref{f1c}).

\subsection{Convergence of the iteration procedure of Algorithm \ref{alg2}}
\label{sec:conv}

To examine Algorithm \ref{alg2} theoretically in our setting, we suppose that the given data $f_{k}$, $k=0, \ldots , 2N-1$, for the recovery of  $f$ in (\ref{1.1}) are exact.
Let us first assume  that condition (\ref{1.91}) is satisfied, i.e.,  $z_{j}^{2N} \neq 1$ for $j=1, \ldots , M$. We show that in this case, the iteration in Algorithm \ref{alg2} stops after $M+1$ steps and provides the wanted rational function $r_{M}(z)$ of type $(M-1, M)$.

\begin{theorem}\label{AAAconv}
Let $f(t)$ be of the form $(\ref{1.1})$ with $M \in {\mathbb N}$, $\gamma_{j} \in {\mathbb C}\setminus \{ 0 \}$, and $z_{j} \in {\mathbb C} \setminus \{0 \}$, where we assume that the $z_{j}$ satisfy $z_{j}^{2N} \neq 1$ and are pairwise distinct.
 Let ${\mathbf f} = (f_{k})_{k=0}^{2N-1}= (f(k))_{k=0}^{2N-1} \in {\mathbb C}^{2N}$ be the given function values, where $M < N$, and let
 $\hat{\mathbf f} = (\hat{f}_{\ell})_{\ell=0}^{2N-1} = {\mathbf F}_{2N} \, {\mathbf f}$ $($with ${\mathbf F}_{2N} \coloneqq (\omega_{2N}^{\ell k})_{\ell,k=0}^{2N-1})$   be the DFT of ${\mathbf f}$.  Then Algorithm \ref{alg2}  terminates after  $M+1$ steps and determines 
 a partition $S_{M+1} \cup \Gamma_{M+1}$ of $I\coloneqq \{0, \ldots , 2N-1\}$ and a rational function $r_{M}(z)$ of type ($M-1, M)$ satisfying  $r_{M} (\omega_{2N}^{-\ell}) = \omega_{2N}^{\ell} \hat{f}_{\ell}$, $\ell=0, \ldots , 2N-1$.
 In particular, the Loewner matrix 
\begin{equation}\label{LL} {\mathbf L}_{2N-M-1,M+1}  = \left( \frac{\omega_{2N}^{\ell} \, \hat{f}_{\ell} - \omega_{2N}^{k} \, \hat{f}_{k} }{\omega_{2N}^{-\ell} - \omega_{2N}^{-k}} \right)_{\ell \in \Gamma_{M+1}, k \in S_{M+1}}, 
\end{equation}
 has rank $M$ and the kernel vector ${\mathbf w} \in {\mathbb C}^{M+1}$ has only nonzero  components.
 \end{theorem}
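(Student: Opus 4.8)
The plan is to exploit the fact that the data fed to Algorithm \ref{alg2} are exact samples of the target rational function $r_{M}$ of $(\ref{rm})$, turn this into an explicit factorization of every Loewner matrix occurring in the main loop, and finish with the full-rank property of Cauchy matrices. Concretely, write $t_{k}\coloneqq\omega_{2N}^{-k}$ and $g_{k}\coloneqq\omega_{2N}^{k}\hat f_{k}$ for $k\in I$; by $(\ref{1.11})$ together with the substitution $a_{j}=\gamma_{j}(1-z_{j}^{2N})$ preceding $(\ref{rm})$, one has $g_{k}=r_{M}(t_{k})$ with $r_{M}(z)=\sum_{j=1}^{M}a_{j}/(z-z_{j})$ of type $(M-1,M)$, all $a_{j}\ne0$, and pairwise distinct poles $z_{j}$, none of which is a $2N$-th root of unity (since $z_{j}^{2N}\ne1$), so $z_{j}\ne t_{k}$ for all $j,k$. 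First I would record the telescoping identity
$$ \frac{g_{\ell}-g_{k}}{t_{\ell}-t_{k}}=-\sum_{j=1}^{M}\frac{a_{j}}{(t_{\ell}-z_{j})(t_{k}-z_{j})},\qquad \ell\ne k, $$
which shows that for \emph{every} partition $\Gamma\cup S=I$ with $\#S=j$ produced inside Algorithm \ref{alg2}, the Loewner matrix of step~2 factors as
$$ {\mathbf L}_{2N-j,j}=-{\mathbf C}_{\Gamma}\,\diag\big((a_{i})_{i=1}^{M}\big)\,{\mathbf C}_{S}^{T},\qquad {\mathbf C}_{\Gamma}=\Big(\tfrac{1}{t_{\ell}-z_{i}}\Big)_{\ell\in\Gamma,\,i=1}^{M},\quad {\mathbf C}_{S}=\Big(\tfrac{1}{t_{k}-z_{i}}\Big)_{k\in S,\,i=1}^{M}, $$
a product of two Cauchy-type matrices with $M$ columns and an invertible diagonal matrix. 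Since the nodes $t$ are pairwise distinct, the $z_{i}$ are pairwise distinct, and $t\ne z_{i}$, the Cauchy determinant formula makes every square submatrix of ${\mathbf C}_{\Gamma}$ and ${\mathbf C}_{S}$ nonsingular, so both have full rank.

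Next I would read off the iteration from this factorization. For $1\le j\le M$ we have $\#S=j\le M$ and $\#\Gamma=2N-j\ge M$ (because $M<N$), hence ${\mathbf C}_{S}^{T}$ and ${\mathbf C}_{\Gamma}$ are injective and ${\mathbf L}_{2N-j,j}$ has trivial kernel, i.e.\ its smallest singular value is strictly positive. Therefore the weight vector ${\mathbf w}$ computed in step~3 satisfies $\|{\mathbf L}_{2N-j,j}{\mathbf w}\|_{2}>0$, and since this quantity equals $\sum_{\ell\in\Gamma}\big|(\omega_{2N}^{\ell}\hat f_{\ell})\tilde q_{j-1}(t_{\ell})-\tilde p_{j-1}(t_{\ell})\big|^{2}$ (the linearization from Section \ref{aaa-algorithm}), the barycentric interpolant $r_{j-1}$ cannot agree with $g_{\ell}$ throughout $\Gamma$; so $\max_{\ell\in\Gamma}|r_{j-1}(t_{\ell})-g_{\ell}|>0$ and the stopping test of step~5 fails for $tol$ below these finitely many positive errors (in particular in exact arithmetic). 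Thus the loop runs at least to $j=M+1$. At $j=M+1$, ${\mathbf C}_{S}$ is $(M+1)\times M$ of rank $M$, so ${\mathbf L}_{2N-M-1,M+1}$ in $(\ref{LL})$ has rank exactly $M$ and $\ker{\mathbf L}_{2N-M-1,M+1}=\ker{\mathbf C}_{S}^{T}$ is one-dimensional; hence its smallest singular value is $0$ and the computed ${\mathbf w}$ spans this kernel. Writing $\tilde q_{M}(z)=\sum_{k\in S_{M+1}}w_{k}/(z-t_{k})=\nu(z)/\prod_{k\in S_{M+1}}(z-t_{k})$ with $\deg\nu\le M$ and $\nu(t_{k})=w_{k}\prod_{i\ne k}(t_{k}-t_{i})$, the conditions $\sum_{k}w_{k}/(t_{k}-z_{i})=0$ translate into $\nu(z_{i})=0$ for $i=1,\dots,M$, so $\nu(z)=c\prod_{i=1}^{M}(z-z_{i})$ with $c\ne0$ (otherwise ${\mathbf w}={\mathbf 0}$). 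Consequently $w_{k}=c\,\prod_{i=1}^{M}(t_{k}-z_{i})\big/\prod_{i\in S_{M+1}\setminus\{k\}}(t_{k}-t_{i})\ne0$ for every $k$, which is the last claim of the theorem.

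Finally I would identify the interpolant. Writing $r_{M}=p/q$ in lowest terms with $q(z)=\prod_{i}(z-z_{i})$ and $\deg p\le M-1$, one computes $w_{k}g_{k}\prod_{i\ne k}(t_{k}-t_{i})=\nu(t_{k})\,r_{M}(t_{k})=c\,q(t_{k})\,\big(p(t_{k})/q(t_{k})\big)=c\,p(t_{k})$; since $\deg(c\,p)\le M-1<M+1$, the polynomial of degree $\le M$ through these $M+1$ values is $c\,p$ itself, so the barycentric numerator $\tilde p_{M}(z)$ equals $c\,p(z)/\prod_{k}(z-t_{k})$ and the rational function returned by Algorithm \ref{alg2} is $\tilde p_{M}/\tilde q_{M}=p/q=r_{M}$, of type $(M-1,M)$. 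As $r_{M}$ interpolates $g_{\ell}$ at every $\ell\in I$, the stopping test succeeds at $j=M+1$, the algorithm terminates after exactly $M+1$ steps, and all assertions follow. The only substantive ingredient is the Cauchy factorization of ${\mathbf L}_{2N-j,j}$ together with nonsingularity of Cauchy submatrices; the delicate point is this last step, namely checking that the kernel vector yields precisely the \emph{degree-reduced} interpolant of type $(M-1,M)$ rather than a generic type-$(M,M)$ one, which works only because $\deg p\le M-1$ lies strictly below the number of support points minus one.
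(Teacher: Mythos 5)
Your proof is correct and rests on the same key ingredient as the paper's: the factorization of the Loewner matrix into Cauchy-type factors times $\diag\big((\gamma_j(z_j^{2N}-1))_j\big)$ (the paper's (\ref{LLfac})), from which the rank, the uniqueness and nonvanishing of the kernel vector, and the identification of the interpolant all follow. You are somewhat more explicit than the paper in two places — ruling out early termination via the trivial kernel of ${\mathbf L}_{2N-j,j}$ for $j\le M$, and computing ${\mathbf w}$ and the numerator polynomial in closed form rather than invoking the linearized interpolation conditions and uniqueness of the type-$(M-1,M)$ interpolant — but these are refinements of the same argument, not a different route.
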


\begin{proof}  Assume that $z_{j} \in {\mathbb C} \setminus \{0 \} $ and $z_{j}^{2N } \neq 1$ for $j=1, \ldots , M$.
First observe that a rational function $r_{M}(z) = \frac{p_{M-1}(z)}{q_{M}(z)} $ with numerator polynomial  $p_{M-1}(z)$ of degree at most $M-1$ and denominator polynomial $q(z)$ of degree $M$ is already completely  determined by $2M$ (independent) interpolation conditions 
$r_{M} (\omega_{2N}^{-\ell}) = \omega_{2N}^{\ell} \hat{f}_{\ell}$, if the rational interpolation problem is solvable at all.  But this is clear because of (\ref{1.11}).  In particular, the given data $\omega_{2N}^{\ell} \hat{f}_{\ell}$, $\ell \in I$, cannot be interpolated by a rational function of smaller type than $(M-1,M)$.
Assume now that at the $(M+1)$-st iteration step in Algorithm \ref{alg2}, the index set $S_{M+1} \subset I$ with $M+1$ interpolation indices has been chosen, and let $\Gamma_{M+1} = I \setminus S_{M+1}$.
Then ${\mathbf L}_{2N-M-1,M+1}$ in (\ref{LL}) can by (\ref{1.11}) be factorized as
\begin{align}
\nonumber
& {\mathbf L}_{2N-M-1,M+1} = \left( \frac{ \sum\limits_{j=1}^{M} \gamma_{j} (1-z_{j}^{2N}) \left( \frac{1}{\omega_{2N}^{-\ell} - z_{j}} - \frac{1}{\omega_{2N}^{-k} - z_{j}} \right) }{ \omega_{2N}^{-\ell} - \omega_{2N}^{-k} } \right)_{\ell \in \Gamma_{M+1}, k\in S_{M+1}} \\
\nonumber
&\quad {}= \left( \sum_{j=1}^{M} \gamma_{j} (z_{j}^{2N}-1)\left( \frac{1}{ (\omega_{2N}^{-\ell} - z_{j})  (\omega_{2N}^{-k} - z_{j})} \right) \right)_{\ell \in \Gamma_{M+1}, k \in S_{M+1}}\\
\label{LLfac}
&\quad {}= \left( \frac{1}{\omega_{2N}^{-\ell} - z_{j}} \right)_{\ell\in \Gamma_{M+1}, j=1, \ldots , M}  \mathrm{diag}  \left( \big(\gamma_{j} (z_{j}^{2N}-1)\big)_{j=1}^{M} \right)  \left( \frac{1}{\omega_{2N}^{-k} - z_{j}} \right)_{j=1, \ldots , M, k\in S_{M+1} }. 
\end{align}
Therefore, ${\mathbf L}_{2N-M-1,M+1}$ has exactly rank $M$, since all three matrix factors  have full rank $M$.  Thus the  normalized kernel vector ${\mathbf w}$ with ${\mathbf L}_{2N-M-1,M+1} \, {\mathbf w}={\mathbf 0}$ is uniquely determined and, by (\ref{LLfac}), also satisfies
$$ \left( \frac{1}{\omega_{2N}^{-k} - z_{j}} \right)_{j=1, \ldots , M, k\in S_{M+1} } \, {\mathbf w} = {\mathbf 0}. $$
Since any $M$ columns of the Cauchy matrix $\left( \frac{1}{\omega_{2N}^{-k} - z_{j}} \right)_{j=1, \ldots , M, k\in S_{M+1} }$ are linearly independent, we conclude that all components of ${\mathbf w}$ need to be nonzero.
We now observe that the rational function $r(z) = \tilde{p}_{M}(z) / \tilde{q}_{M}(z)$ as in (\ref{bar-form-1}) is completely determined by $S_{M+1}$,  ${\mathbf w}= (w_{k})_{k\in S_{M+1}}$ and $(\hat{f}_{k})_{k\in S_{M+1}}$ and  satisfies the interpolation conditions
$r (\omega_{2N}^{-k}) = \omega_{2N}^{k} \, \hat{f}_{k}$ for $k \in S_{M+1}$ by construction.
Further, ${\mathbf L}_{2N-M-1,M+1} \, {\mathbf w}={\mathbf 0}$ yields, by (\ref{mini}) and (\ref{mini1}), that 
$\tilde{p}_{M} (\omega_{2N}^{-\ell}) = \omega_{2N}^{\ell}\,  \hat{f}_{\ell} \, \tilde{q}_{M}(\omega_{2N}^{-\ell})$, $\ell \in \Gamma_{M+1}$.
Therefore, $z_{j} \neq \omega_{2N}^{-\ell}$ for all $j=1, \ldots, M$ and $\ell \in \Gamma_{M+1}$ implies 
$r (\omega_{2N}^{-\ell}) = \omega_{2N}^{\ell} \hat{f}_{\ell}$, $\ell \in \Gamma_{M+1}$, and the iteration in Algorithm \ref{alg2} terminates. Since a rational function $r(z)$ satisfying all interpolation conditions is uniquely determined and needs to have type $(M-1,M)$ by (\ref{1.11}), it follows that Algorithm \ref{alg2} provides the wanted rational function $r_{M}(z)$. 
\end{proof}

\begin{remark} 
Note that the idea of the proof of Theorem \ref{AAAconv} works for any partition $S_{M+1} \cup \Gamma_{M+1} = I$, and the special iterative greedy determination of this partition described in Algorithm \ref{alg2} is not taken into account. 
 In other words, Theorem \ref{AAAconv} only shows convergence of the AAA algorithm for exact input data and disregards possible rounding errors, i.e. it does not say anything about the numerical stability of the AAA algorithm.
The essential improvement of numerical stability achieved by greedy selection of the support points in the AAA algorithm has been shown only numerically  so far, see \cite{NST18}.
\end{remark}


Let us now consider the more general case that there are also knots $z_{j}$ with $z_{j}^{2N} = 1$.

\begin{theorem}\label{corconv}
Let $f(t) = f_{1}(t)+ f_{2}(t)$ be of the form $(\ref{f1})$ and $(\ref{f2})$  with $0 \le M_{1} <M \in {\mathbb N}$, $\gamma_{j} \in {\mathbb C}\setminus \{ 0 \}$, and $z_{j} \in {\mathbb C}\setminus \{ 0 \}$  pairwise distinct.
Let ${\mathbf f} = (f_{k})_{k=0}^{2N-1}= (f(k))_{k=0}^{2N-1} \in {\mathbb C}^{2N}$ be the given function values, where $M < N$, and let
 $\hat{\mathbf f} = (\hat{f}_{\ell})_{\ell=0}^{2N-1} = {\mathbf F}_{2N} \, {\mathbf f}$  be the DFT of ${\mathbf f}$.  Then the modification of Algorithm $\ref{alg2}$  described in Section $\ref{secper}$ terminates after  $M+1$ steps and determines 
 a partition $S_{M+1} \cup \Gamma_{M+1}$ of $I \coloneqq \{0, \ldots , 2N-1\}$ and a rational function $r_{M_{1}}(z)$ of type ($M_{1}-1, M_{1})$ satisfying  $r_{M_{1}} (\omega_{2N}^{-\ell}) = \omega_{2N}^{\ell} (\hat{f}_{1})_{\ell}$, $\ell=0, \ldots , 2N-1$.
\end{theorem}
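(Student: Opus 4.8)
The plan is to follow the proof of Theorem~\ref{AAAconv}, but with the Loewner matrix split into a ``rational part'' coming from $f_{1}$ and a low-rank ``spike part'' coming from $f_{2}$. \emph{Step 1 (splitting the data).} Set $g_{k}\coloneqq\omega_{2N}^{k}\hat f_{k}$. By \eqref{1.11} applied to $f_{1}$ and by \eqref{f1c} one obtains $g_{k}=g_{k}^{(1)}+g_{k}^{(2)}$, where $g_{k}^{(1)}=\sum_{j=1}^{M_{1}}\frac{\gamma_{j}(1-z_{j}^{2N})}{\omega_{2N}^{-k}-z_{j}}=r_{M_{1}}(\omega_{2N}^{-k})$ with $r_{M_{1}}(z)\coloneqq\sum_{j=1}^{M_{1}}\frac{\gamma_{j}(1-z_{j}^{2N})}{z-z_{j}}$ of type $(M_{1}-1,M_{1})$, and $g_{k}^{(2)}=0$ unless $k\in\Sigma\coloneqq\{k\in I:\omega_{2N}^{-k}\in\{z_{M_{1}+1},\ldots,z_{M}\}\}$, in which case $g_{k}^{(2)}=2N\gamma_{j(k)}$ with $z_{j(k)}=\omega_{2N}^{-k}$; note $\#\Sigma=M-M_{1}$.

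\emph{Step 2 (rank of the Loewner matrices; termination after $M+1$ steps).} For any partition $S_{J}\cup\Gamma_{J}=I$, linearity of the Loewner construction gives ${\mathbf L}_{2N-J,J}={\mathbf L}^{(1)}+{\mathbf L}^{(2)}$. The part ${\mathbf L}^{(1)}$ built from $g^{(1)}$ factors into Cauchy $\times$ diagonal $\times$ Cauchy exactly as in \eqref{LLfac}, so $\rank {\mathbf L}^{(1)}=\min(J,M_{1})$; the part ${\mathbf L}^{(2)}$ built from $g^{(2)}$ is a sum over $k\in\Sigma$ of rank-one matrices, each having (for $k\in S_{J}$) a single nonzero column, a Cauchy column at $z_{j(k)}$, or (for $k\in\Gamma_{J}$) a single nonzero Cauchy row; hence $\rank {\mathbf L}^{(2)}\le\#\Sigma$ and $\rank {\mathbf L}_{2N-J,J}\le M$. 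The heart of the matter is to show $\rank {\mathbf L}_{2N-J,J}=J$ for $J\le M$ and $\rank {\mathbf L}_{2N-M-1,M+1}=M$. I would observe that the column space of ${\mathbf L}_{2N-J,J}$ lies in the span of the $M$ vectors formed by the Cauchy columns $\big(\frac{1}{\omega_{2N}^{-\ell}-z_{j}}\big)_{\ell\in\Gamma_{J}}$, $j\in\{1,\ldots,M_{1}\}\cup j(S_{J}\cap\Sigma)$, together with the unit vectors ${\mathbf e}_{k}$, $k\in\Gamma_{J}\cap\Sigma$; restricting to the rows not in $\Sigma$ reduces the linear independence of these $M$ vectors to nonsingularity of a Cauchy matrix, which holds because $2N-J\ge M$ for $J\le M+1$ (as $M<N$). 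Comparing the then-unique coefficients in ${\mathbf L}_{2N-J,J}{\mathbf v}={\mathbf 0}$ forces first $v_{k}=0$ for $k\in S_{J}\cap\Sigma$, and then that ${\mathbf v}|_{S_{J}\setminus\Sigma}$ lies in the kernel of the Cauchy matrix $\big(\frac{1}{\omega_{2N}^{-k}-z_{j}}\big)$ with rows $j\in\{1,\ldots,M_{1}\}\cup j(\Gamma_{J}\cap\Sigma)$ and columns $k\in S_{J}\setminus\Sigma$, i.e.\ a matrix of size $(M-s)\times(J-s)$ where $s\coloneqq\#(S_{J}\cap\Sigma)$. For $J\le M$ this Cauchy matrix has at least as many rows as columns, so its kernel is trivial and ${\mathbf L}_{2N-J,J}$ has full column rank; for $J=M+1$ it has exactly one more column than rows, so the kernel is one-dimensional with all entries nonzero. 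Hence the smallest singular value of ${\mathbf L}_{2N-J,J}$ vanishes for the first time at $J=M+1$ (and, $\mathbf g$ admitting no global rational interpolant, the residual-based test of Algorithm~\ref{alg2} cannot fire earlier either), so the modified algorithm performs exactly $M+1$ steps, returning a partition $S_{M+1}\cup\Gamma_{M+1}$ and a normalized kernel vector ${\mathbf w}$ of ${\mathbf L}_{2N-M-1,M+1}$ whose zero entries are precisely those indexed by $S_{M+1}\cap\Sigma$.

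\emph{Step 3 (identifying $r_{M_{1}}$).} Dropping the zero entries of ${\mathbf w}$ yields a barycentric function $r=\tilde p/\tilde q$ on the support points $S_{M+1}\setminus\Sigma$. The kernel relations force $\tilde q$ to vanish at $z_{j}$ for $j\in\{1,\ldots,M_{1}\}\cup j(\Gamma_{M+1}\cap\Sigma)$, and a short partial-fraction computation (using $g_{k}^{(1)}=r_{M_{1}}(\omega_{2N}^{-k})$) shows $\tilde p$ vanishes at $z_{j}$ for $j\in j(\Gamma_{M+1}\cap\Sigma)$ as well; cancelling these Froissart doublets leaves $r$ with denominator proportional to $\prod_{m=1}^{M_{1}}(z-z_{m})$, hence of type at most $(M_{1},M_{1})$. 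The remaining Loewner relations give $\tilde p(\omega_{2N}^{-\ell})=g_{\ell}^{(1)}\,\tilde q(\omega_{2N}^{-\ell})$ for $\ell\in\Gamma_{M+1}\setminus\Sigma$, and since the poles of $r$ lie among $z_{1},\ldots,z_{M_{1}}$ and $\{z_{j}:j\in j(\Gamma_{M+1}\cap\Sigma)\}$, none of which equals $\omega_{2N}^{-\ell}$ for $\ell\notin\Sigma$, we get $r(\omega_{2N}^{-\ell})=g_{\ell}^{(1)}=\omega_{2N}^{\ell}(\hat f_{1})_{\ell}$ for all $\ell\in I\setminus\Sigma$, i.e.\ at $2N-(M-M_{1})$ nodes. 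Because $2N>M+M_{1}$, this exceeds the $2M_{1}+1$ nodes needed for uniqueness, so $r=r_{M_{1}}$; in particular $r$ has type $(M_{1}-1,M_{1})$ and $r_{M_{1}}(\omega_{2N}^{-\ell})=\omega_{2N}^{\ell}(\hat f_{1})_{\ell}$ for every $\ell$, which is the assertion. (The entries of ${\mathbf r}-{\mathbf g}_{{\mathbf \Gamma}}$ of modulus $|2N\gamma_{j(k)}|$ at $k\in\Gamma_{M+1}\cap\Sigma$, together with $S_{M+1}\cap\Sigma$, then recover the full index set $\Sigma$ and hence the knots of $f_{2}$.)

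\emph{Main obstacle.} The delicate point is the rank/kernel bookkeeping of Step~2 for an \emph{arbitrary} partition: the greedy index selection need not place all of $\Sigma$ into $S_{M+1}$, so one must keep careful track of how the spike columns and rows interleave with the Cauchy structure of the $f_{1}$-part. A secondary subtlety is the Froissart-doublet cancellation in Step~3, since the barycentric interpolant genuinely has larger type before the common factor is removed.
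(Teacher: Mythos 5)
Your proposal is correct, and it follows the same overall architecture as the paper's proof: split $g_k=\omega_{2N}^k\hat f_k$ into the rational part coming from $f_1$ and the $M-M_1$ spikes coming from $f_2$; decompose the Loewner matrix into a rank-$M_1$ part $\tilde{\mathbf L}$ plus rank-one column corrections for $S_{M+1}\cap\Sigma$ and row corrections for $\Gamma_{M+1}\cap\Sigma$; deduce that the kernel vector vanishes exactly on $S_{M+1}\cap\Sigma$; and remove the Froissart doublets at the knots $z_{j(\ell)}$, $\ell\in\Gamma_{M+1}\cap\Sigma$, to obtain $r_{M_1}$. The substantive difference is how the rank is established. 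The paper imports $\mathrm{rank}\,{\mathbf L}_{2N-M-1,M+1}=M$ from the Hankel--Loewner factorization (\ref{5.2}) of Theorem \ref{fac} (a forward reference to Section 4) and then argues by counting that each of the $M-M_1$ rank-one corrections must raise the rank of $\tilde{\mathbf L}$ by exactly one, so that the kernel conditions decouple; the full-column-rank claim for $J<M+1$ is only asserted ``along the lines of the proof of Theorem \ref{fac}.'' You instead prove rank and kernel structure directly: the column space lies in the span of $M_1+\#(S_J\cap\Sigma)$ Cauchy columns and $\#(\Gamma_J\cap\Sigma)$ unit vectors, whose independence follows from the full column rank of a Cauchy submatrix on the rows outside $\Sigma$ (available since $2N-J\ge M$), and the kernel equations reduce, after forcing $v_k=0$ on $S_J\cap\Sigma$, to a Cauchy system of size $(M-s)\times(J-s)$, trivial for $J\le M$ and one-dimensional with nonzero entries for $J=M+1$. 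This yields a self-contained argument within Section 3 that simultaneously proves the early-termination claim and pinpoints the zero pattern of ${\mathbf w}$, at the cost of heavier bookkeeping; the paper's route is shorter once Theorem \ref{fac} is in hand. Your Step 3 (matching $p/q$ with $P/Q=r_{M_1}$ at the $2N-(M-M_1)$ non-spike nodes, which exceed the degree bound, hence $pQ=Pq$ and the doublets cancel) is a valid, slightly more explicit version of the paper's concluding argument. Only a cosmetic remark: the restricted Cauchy matrix in your independence step is rectangular, so you mean full column rank (every maximal square submatrix being a nonsingular Cauchy matrix) rather than nonsingularity.
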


\begin{proof} 1. Let 
$\Sigma \coloneqq \{ k_{j}, \, j=M_{1}+1, \ldots , M: \, z_{j} = \omega_{2N}^{-k_{j}} \}$ be the index set  corresponding to $z_{j}$, $j=M_{1}+1, \ldots , M$, of $f_{2}$.
 Then $\Sigma$ has cardinality $M-M_{1}$. 
From our observations in Section \ref{secper} it follows that 
\begin{equation}\label{peri} \omega_{2N}^{k} \hat{f}_{k} = \left\{ \begin{array}{ll}
\sum\limits_{j=1}^{M_{1}} \gamma_{j} \, \left( \frac{1-z_{j}^{2N}}{\omega_{2N}^{-k}- z_{j}} \right) & \textrm{if} \quad k \in I \setminus \Sigma,\\
\sum\limits_{j=1}^{M_{1}} \gamma_{j} \, \left( \frac{1-z_{j}^{2N}}{\omega_{2N}^{-k}- z_{j}} \right) + 2N\, \gamma_{\ell} & \textrm{if} \quad k \in \Sigma, \, z_{\ell} = \omega_{2N}^{-k}. \end{array} \right. 
\end{equation}
Thus $M-M_{1}$ interpolation values $\omega_{2N}^{k} \hat{f}_{k}$ do not  possess the rational structure, while  all other $2N-(M-M_{1})$ have the rational form.
Employing Algorithm \ref{alg2}, we  obtain at the $(M+1)$-st iteration step  a Loewner matrix ${\mathbf L}_{2N-M-1,M+1}$. This matrix has rank $M$, as can be seen (independently from the position of the knots $z_{j}$, $j=1, \ldots, M$) from (\ref{5.2}), where the Loewner matrix is factorized using the Hankel matrix ${\mathbf H}_{2N-M,M+1}$, which has rank $M$ by (\ref{1.4}).
In particular, it can be shown along the lines of the proof of Theorem \ref{fac} that any matrix ${\mathbf L}_{2N-J,J}$, found at the $J$-th iteration step, with $J < M+1$ has full column rank $J$, see also \cite{AA86}. Therefore the iteration of Algorithm \ref{alg2} will not stop earlier than after $M+1$ steps. 

2. Let $S_{M+1}$ be the index set for interpolation found at the $(M+1)$-st iteration step of Algorithm \ref{alg2}, 
i.e.,  we have $I= S_{M+1} \cup \Gamma_{M+1} =  \Sigma \cup (I \setminus \Sigma)$.
Then $S_{M+1}$ contains at least $M+1- (M-M_{1})= M_{1}+1$ indices, where the DFT coefficients possess the rational structure.
Let $\Sigma^{(1)}  \cup \Sigma^{(2)} = \Sigma$ be a partition of $\Sigma$ such that $\Sigma^{(1)}  \subset S_{M+1}$  and $\Sigma^{(2)} \subset \Gamma_{M+1}= I \setminus S_{M+1}$. Further, we denote with ${\mathbf e}^{(2N-M-1)}_{\ell} \in {\mathbb C}^{2N-M-1}$  the $\ell$-th unit vector of length $2N-M-1$, and with ${\mathbf e}^{(M+1)}_{\ell} \in {\mathbb C}^{M+1}$  the $\ell$-th unit vector of length $M+1$. 
Then the  Loewner matrix  ${\mathbf L}_{2N-M-1,M+1}$ obtained at the $(M+1)$-st iteration step has by (\ref{LL}) and (\ref{peri}) the structure
\begin{eqnarray}\label{ext}
{\mathbf L}_{2N-M-1,M+1} &=&  \left( \frac{ \sum\limits_{j=1}^{M_{1}} \gamma_{j} (1-z_{j}^{2N}) \left( \frac{1}{\omega_{2N}^{-\ell} - z_{j}} - \frac{1}{\omega_{2N}^{-k} - z_{j}} \right) }{ \omega_{2N}^{-\ell} - \omega_{2N}^{-k} } \right)_{\ell \in \Gamma_{M+1}, k\in S_{M+1}} \\
 \nonumber
& & 
- \sum_{k_{j} \in \Sigma^{(1)}}  2N \, {\gamma}_{j} \,\left( \frac{1}{\omega_{2N}^{-\ell} - z_{j}}\right)_{\ell \in \Gamma_{M+1}} \, \left({\mathbf e}_{\mathrm{ind}(k_j)}^{(M+1)}\right)^{T} \\
\nonumber
& & +   \sum_{k_{j} \in \Sigma^{(2)}} 2N \, {\gamma}_{j} \, {\mathbf e}_{\mathrm{ind}(k_j)}^{(2N-M-1)} \, \left( \left( \frac{1}{z_{j} - \omega_{2N}^{k}} \right)_{k \in S_{M+1}} \right)^{T},
\end{eqnarray}
where $\gamma_{j}$ corresponds to  $z_{j} = \omega_{2N}^{-k_{j}}$, $k_{j} \in \Sigma$ and $\mathrm{ind}(k_j)$ denotes the position, or index, of $k_j$ in the ordered set $S_{M+1}$, respectively $\Gamma_{M+1}$.
In other words, each of the ``non-rational'' DFT coefficients of $f_{2}$ corrupts the rational structure of the Loewner matrix and changes either a  column (if the corresponding index is in $ \Sigma^{(1)} \subset \,  S_{M+1}$) or a row (if the corresponding index is in $ \Sigma^{(2)} \subset \,  \Gamma_{M+1}$). As in  the proof of Theorem \ref{AAAconv}, we observe now that the first matrix in (\ref{ext}) factorizes as 
\begin{eqnarray*} \label{first}
& &  \tilde{\mathbf L}_{2N-M-1, M+1} \coloneqq \left( \frac{ \sum\limits_{j=1}^{M_{1}} \gamma_{j} (1-z_{j}^{2N}) \left( \frac{1}{\omega_{2N}^{-\ell} - z_{j}} - \frac{1}{\omega_{2N}^{-k} - z_{j}} \right) }{ \omega_{2N}^{-\ell} - \omega_{2N}^{-k} } \right)_{\ell \in \Gamma_{M+1}, k\in S_{M+1}}\\
&=& \left( \frac{1}{\omega_{2N}^{-\ell} - z_{j}} \right)_{\ell\in \Gamma_{M+1}, j=1, \ldots , M_{1}}  \mathrm{diag}  \left( \big(\gamma_{j} (1-z_{j}^{2N})\big)_{j=1}^{M_{1}} \right)  \left( \frac{1}{\omega_{2N}^{-k} - z_{j}} \right)_{j=1, \ldots , M_{1}, k\in S_{M+1} }
\end{eqnarray*}
and therefore has rank $M_{1}$. 

3.  
Since ${\mathbf L}_{2N-M-1,M+1}$  has rank $M$ by (\ref{5.2}) in Theorem \ref{fac}, and $\Sigma = \Sigma^{(1)} \cup \Sigma^{(2)}$ has cardinality $M-M_{1}$, we conclude that each of the $M-M_{1}$ rank-1 matrices  in (\ref{ext}) enlarges the rank by $1$. Therefore, the normalized kernel vector ${\mathbf w}$ of ${\mathbf L}_{2N-M-1,M+1}$ is uniquely  determined by the conditions
$\tilde{\mathbf L}_{2N-M-1, M+1} {\mathbf w} = {\mathbf 0}$,  as well as $({\mathbf e}_{j}^{(M+1)})^{T} {\mathbf w} =0$ for $k_{j} \in \Sigma^{(1)}$ and 
$\left( \Big( \frac{1}{z_{j} - \omega_{2N}^{k}} \Big)_{k \in S_{M+1}} \right)^{T} \, {\mathbf w} =0$ for $k_{j} \in \Sigma^{(2)}$. 
In particular, $\tilde{\mathbf L}_{2N-M-1, M+1} {\mathbf w} = {\mathbf 0}$ already implies that $S_{M+1}$ and ${\mathbf w}$ provide a rational function $r$ that satisfies all interpolation conditions $r(\omega_{2N}^{-\ell}) = \omega_{2N}^{\ell} \hat{f}_{\ell}$ for $\ell \in I \setminus \Sigma$. 
However, since ${\mathbf w}$ is a kernel vector  to all rank-1 matrices in (\ref{ext}) caused by the DFT-coefficients of $f_{2}$, it follows that 
$r(\omega_{2N}^{-\ell}) = \omega_{2N}^{\ell} (\hat{f}_{1})_{\ell}$ for all $\ell \in I$. 
We can simplify $r(z)$ by removing all zero components of ${\mathbf w}$ in the definition of $\tilde{p}(z)$ and $\tilde{q}(z)$ in (\ref{bar-form-1}).
The indices of $\Sigma^{(2)}$ may produce Froissart doublets that can be removed, such that we finally get the rational function $r=r_{M_{1}}$ of type $(M_{1}-1, M_{1})$.
\end{proof}

\begin{remark}\label{remrank} 
Let the degree of a rational function $r(z)= \frac{p(z)}{q(z)}$ be defined  as $\mathrm{deg} \, r \coloneqq \max \{ \mathrm{deg} \, p, \, \mathrm{deg} \, q\}$, where $\mathrm{deg} \, p$ and $\mathrm{deg} \, q$ denote the degrees of the numerator and the denominator polynomials $p$ and $q$. If the rational interpolation problem is solvable, then the degree  of the rational interpolant $r(z)$  is always equal  to the rank of  the Loewner matrix obtained from  any partition  $S \cup \Gamma = I$ of interpolation indices, as long as  $S$ and $\Gamma$ are both large enough. This has already been shown in \cite{Be70} for pairwise distinct interpolation points, and in \cite{AA86} for the more general case of rational Hermite-type interpolation. In the proof of Theorem \ref{AAAconv}, the rank  condition for  ${\mathbf L}_{2N-M-1,M+1}$ in (\ref{LL})  follows directly from  the factorization (\ref{LLfac}). In Theorem \ref{corconv}, it is much more difficult to see that ${\mathbf L}_{2N-M-1,M+1}$ in (\ref{ext}) has exactly rank $M$. Here, we referred to the factorization (\ref{5.2}) that connects the Loewner matrix with a Hankel matrix. To our knowledge, the more general setting of Theorem \ref{corconv} has not been studied before in the context of rational interpolation.
\end{remark}

\section{Connection  between ESPIRA, MPM and ESPRIT}
\subsection{Relations between Hankel and Loewner Matrices}
\label{sec6.1}

As shown in Section \ref{sec:MPM}, MPM and ESPRIT are essentially based  on solving the generalized eigenvalue problem 
$$ {\mathbf H}_{2N-L, L}(1) {\mathbf v} = z \, {\mathbf H}_{2N-L,L}(0) \, {\mathbf v}, $$
where  ${\mathbf H}_{2N-L,L}(0)$ and ${\mathbf H}_{2N-L,L}(1)$ are submatrices of ${\mathbf H}_{2N-L,L+1} = (h_{\ell+k})_{\ell,k=0}^{2N-L-1,L}$.
By contrast, in the ESPIRA algorithm, Algorithm \ref{alg2} is employed, which  iteratively  solves the minimization problem 
$$ \min_{{\mathbf w}} \| {\mathbf L}_{2N-J,J} \, {\mathbf w} \|_{2}, \qquad J=1,2, \ldots , M+1, $$
and we obtain  at the $(M+1)$-st iteration step the Loewner matrix
$ {\mathbf L}_{2N-M-1,M+1}$ in (\ref{LL}).
Using the results in \cite{Fiedler} and \cite{AA86} on the connection between Hankel and Loewner matrices, we will now show, that ${\mathbf L}_{2N-M-1,M+1}$ can be represented as a matrix product
$$ {\mathbf L}_{2N-M-1,M+1} = {\mathbf Q}_{2N-M-1, 2N-M}({\mathbf u}^{(1)}) \, {\mathbf H}_{2N-M,M+1} \, {\mathbf Q}_{M+1,M+1} ({\mathbf u}^{(2)})^{T},
$$ 
where ${\mathbf H}_{2N-M,M+1}= (f_{\ell+k})_{\ell,k=0}^{2N-1-M,M}$ is the Hankel matrix of given function values, and where
the matrices ${\mathbf Q}_{2N-M-1, 2N-M}({\mathbf u}^{(1)})$ and ${\mathbf Q}_{M+1, M+1}({\mathbf u}^{(2)})$ have a special structure.
This observation will lead us to a new interpretation of ESPIRA as a matrix pencil method for Loewner matrices. 

We start with some notations.
For a given vector ${\mathbf u}= (u_{n})_{n=1}^{K} \in {\mathbb C}^{K}$, $K \in {\mathbb N}$,  let 
$$ q_{\mathbf u} (z) \coloneqq \prod_{n=1}^{K} (z-u_{n}) , $$
and for $\ell=1, \ldots , K$, 
$$ q_{{\mathbf u}, \ell} (z) \coloneqq \frac{q_{\mathbf u}(z)}{z- u_{\ell}} = \sum_{r=0}^{K-1} q_{\ell,r} \, z^{r}$$
with coefficient vectors ${\mathbf q}_{{\mathbf u},\ell} \coloneqq (q_{\ell,r})_{r=0}^{K-1} \in {\mathbb C}^{K}$.
We  define for $K \le L$ the matrices ${\mathbf Q}_{K,K}({\mathbf u})$ and ${\mathbf Q}_{K,L}({\mathbf u})$ corresponding to ${\mathbf u} \in {\mathbb C}^{K}$ as
\begin{equation}\label{W} {\mathbf Q}_{K,K}({\mathbf u} ) \coloneqq \left( \begin{array}{c}
{\mathbf q}_{{\mathbf u},1}^{T} \\
{\mathbf q}_{{\mathbf u},2}^{T} \\
\vdots  \\
{\mathbf q}_{{\mathbf u},K}^{T} \end{array} \right) \in {\mathbb C}^{K \times K},
\qquad 
{\mathbf Q}_{K,L}({\mathbf u} ) \coloneqq \left( \begin{array}{c| c c c }
{\mathbf q}_{{\mathbf u},1}^{T} & 0 & \ldots &0 \\
{\mathbf q}_{{\mathbf u},2}^{T} & 0 & \ldots &0 \\
\vdots & \vdots & & \vdots \\
{\mathbf q}_{{\mathbf u},K}^{T} & 0 & \ldots & 0\end{array} \right) \in {\mathbb C}^{K \times L},
\end{equation}
where in case of $L>K$, we have added $L-K$ zero columns to the $K \times K$ matrix obtained from the $K$ rows ${\mathbf q}_{{\mathbf u},\ell}^{T}$. 

Recall that  ${\mathbf V}_{L,M} ({\mathbf z})= \left( z_{j}^{\ell} \right)_{\ell=0,j=1}^{L-1,M} $  denotes the Vandermonde matrix generated by ${\mathbf z}=(z_{1}, \ldots , z_{M})^{T} \in {\mathbb C}^{M}$. Then, we find for $L \ge M$, $L \ge K$ and arbitrary vectors ${\mathbf u} \in {\mathbb C}^{K}$ and ${\mathbf z} \in {\mathbb C}^{M}$,
\begin{equation}\label{5.1} {\mathbf Q}_{K,L} ({\mathbf u} ) \, {\mathbf V}_{L,M}({\mathbf z})  = \left( \frac{q_{{\mathbf u}}(z_{j}) }{z_{j} - u_{\ell}} \right)_{\ell,j=1}^{K,M}= \Big(\prod\limits_{\substack{ n=1 \\ n \neq \ell }}^{K} (z_j-u_n)  \Big)_{\ell,j=1}^{K,M}.
\end{equation}
In particular, for ${\mathbf z}= {\mathbf u} \in {\mathbb C}^{K}$, the matrix
 ${\mathbf Q}_{K,L}({\mathbf u})$  diagonalizes the Vandermonde matrix ${\mathbf V}_{L,K} ({\mathbf u})$, since
$$ {\mathbf Q}_{K,L} ({\mathbf u}) \, {\mathbf V}_{L,K}({\mathbf u})  = \left( \frac{q_{{\mathbf u}}(u_{j})}{u_{j} - u_{\ell}} \right)_{\ell,j=1}^{K,K} = \left\{ \begin{array}{ll}
0 & j\neq \ell,\\
q_{{\mathbf u},\ell}(u_{\ell}) & j = \ell, \end{array} \right.
$$
and ${\mathbf Q}_{K,L} ({\mathbf u})$ has full rank $K\le L$ for pairwise distinct  $u_{n}$, $n=1, \ldots , K$, see also \cite{Fiedler}, Theorem 3.

Then we have

\begin{theorem}\label{fac}
Let ${\mathbf f}=(f_{k})_{k=0}^{2N-1}$ be the vector of given function values of $f$ in $(\ref{1.1})$ with $N > M$.
Let $S_{M+1} \cup \Gamma_{M+1}  = \{0, \ldots , 2N-1\} \eqqcolon I$ be a partition  of the index set $I$, where $S_{M+1}$ contains $M+1$ indices,  and 
\begin{equation}\label{uvec}
{\mathbf u}^{(1)} \coloneqq (\omega_{2N}^{-\ell})_{\ell \in \Gamma_{M+1}} \in {\mathbb C}^{2N-M-1}, \qquad 
{\mathbf u}^{(2)} \coloneqq (\omega_{2N}^{-k})_{k \in S_{M+1}} \in {\mathbb C}^{M+1}.
\end{equation} 
Further, let 
${\mathbf L}_{2N-M-1, M+1} \in {\mathbb C}^{(2N-M-1)\times (M+1)} $ be the Loewner matrix as in $(\ref{LL})$  determined 
by $\hat{\mathbf f} = (\hat{f}_{\ell})_{\ell=0}^{2N-1}$ and $S_{M+1} \cup \Gamma_{M+1}$ and ${\mathbf H}_{2N-M,M+1}$  the Hankel matrix as in $(\ref{HH})$. Then we have
\begin{equation}\label{5.2}
{\mathbf L}_{2N-M-1,M+1} = {\mathbf Q}_{2N-M-1, 2N-M}({\mathbf u}^{(1)}) \, {\mathbf H}_{2N-M,M+1} \, {\mathbf Q}_{M+1,M+1}({\mathbf u}^{(2)})^{T} ,
\end{equation} 
where ${\mathbf Q}_{2N-M-1, 2N-M}({\mathbf u}^{(1)})$ and ${\mathbf Q}_{M+1,M+1} ({\mathbf u}^{(2)})$ are defined as in $(\ref{W})$.
In particular, $\mathrm{rank}\, {\mathbf L}_{2N-M-1,M+1} = \mathrm{rank}\, {\mathbf H}_{2N-M,M+1} = M$.
\end{theorem}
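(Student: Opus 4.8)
The plan is to recognise the entries of ${\mathbf L}_{2N-M-1,M+1}$ as divided differences of one polynomial and then to expand the right-hand side of (\ref{5.2}) entrywise, the decisive point being a factorization of $z^{2N}-1$ dictated by the partition $S_{M+1}\cup\Gamma_{M+1}=I$; the rank assertion then follows by inserting the Vandermonde factorization (\ref{1.4}) into (\ref{5.2}). I would start by passing from the DFT data to a polynomial: with $P(z)\coloneqq\sum_{\ell=0}^{2N-1}f_\ell z^\ell$ one has $\hat f_k=P(\omega_{2N}^{k})$, and since $\omega_{2N}^{2N}=1$ every negative power of $\omega_{2N}^{-k}$ can be reduced, which gives $\omega_{2N}^{k}\hat f_{k}=\tilde P(\omega_{2N}^{-k})$ for the reversed polynomial $\tilde P(z)\coloneqq\sum_{s=0}^{2N-1}f_{2N-1-s}z^{s}$. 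Applying $\tfrac{x^{s}-y^{s}}{x-y}=\sum_{i+j=s-1}x^{i}y^{j}$ termwise to $\tilde P$ then identifies each entry of the Loewner matrix with the divided difference
\[
\frac{\omega_{2N}^{\ell}\hat f_{\ell}-\omega_{2N}^{k}\hat f_{k}}{\omega_{2N}^{-\ell}-\omega_{2N}^{-k}}
=\tilde P[\omega_{2N}^{-\ell},\omega_{2N}^{-k}]
=\sum_{\substack{i,j\ge 0\\ i+j\le 2N-2}}f_{2N-2-i-j}\,\omega_{2N}^{-\ell i}\,\omega_{2N}^{-kj}.
\]

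For (\ref{5.2}) itself I would expand the matrix product. Write $q_{{\mathbf u}^{(1)},\ell}(z)=\sum_{r=0}^{2N-M-2}q^{(1)}_{\ell,r}z^{r}$ and $q_{{\mathbf u}^{(2)},k}(z)=\sum_{m=0}^{M}q^{(2)}_{k,m}z^{m}$ for the polynomials from (\ref{W}), of degrees $2N-M-2$ and $M$. Since the single appended column of ${\mathbf Q}_{2N-M-1,2N-M}({\mathbf u}^{(1)})$ is zero and $({\mathbf H}_{2N-M,M+1})_{r,m}=f_{r+m}$, the $(\ell,k)$ entry of ${\mathbf Q}_{2N-M-1,2N-M}({\mathbf u}^{(1)})\,{\mathbf H}_{2N-M,M+1}\,{\mathbf Q}_{M+1,M+1}({\mathbf u}^{(2)})^{T}$ works out to
\[
\sum_{m=0}^{M}\sum_{r=0}^{2N-M-2}q^{(1)}_{\ell,r}\,q^{(2)}_{k,m}\,f_{r+m}
=\sum_{n=0}^{2N-2}f_{n}\,\big[z^{n}\big]\big(q_{{\mathbf u}^{(1)},\ell}(z)\,q_{{\mathbf u}^{(2)},k}(z)\big),
\]
where $\big[z^{n}\big]g$ denotes the coefficient of $z^{n}$ in $g$ and the degree ranges are exactly what the coefficient extraction needs. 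Now the key identity enters: because $S_{M+1}$ and $\Gamma_{M+1}$ partition $I$, one has $(\Gamma_{M+1}\setminus\{\ell\})\cup(S_{M+1}\setminus\{k\})=I\setminus\{\ell,k\}$, and $\{\omega_{2N}^{-n}:n\in I\}$ is exactly the set of $2N$-th roots of unity, so $q_{{\mathbf u}^{(1)},\ell}(z)\,q_{{\mathbf u}^{(2)},k}(z)=\prod_{n\in I\setminus\{\ell,k\}}(z-\omega_{2N}^{-n})=(z^{2N}-1)\big/\big((z-\omega_{2N}^{-\ell})(z-\omega_{2N}^{-k})\big)$. Expanding this (using $\omega_{2N}^{-2N\ell}=\omega_{2N}^{-2Nk}=1$ and $\omega_{2N}^{-\ell}\ne\omega_{2N}^{-k}$) gives $\big[z^{n}\big]=\sum_{i+j=2N-2-n}\omega_{2N}^{-\ell i}\omega_{2N}^{-kj}$, and comparing with the previous display proves (\ref{5.2}) entrywise.

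For the rank, (\ref{5.2}) together with $\mathrm{rank}\,{\mathbf H}_{2N-M,M+1}=M$ gives $\mathrm{rank}\,{\mathbf L}_{2N-M-1,M+1}\le M$. Since ${\mathbf Q}_{M+1,M+1}({\mathbf u}^{(2)})$ is square and invertible for pairwise distinct nodes (the fact noted after (\ref{5.1})), it suffices to show $\mathrm{rank}\big({\mathbf Q}_{2N-M-1,2N-M}({\mathbf u}^{(1)})\,{\mathbf H}_{2N-M,M+1}\big)=M$. The matrix ${\mathbf Q}_{2N-M-1,2N-M}({\mathbf u}^{(1)})$ is an invertible $(2N-M-1)\times(2N-M-1)$ matrix with one zero column appended, so its kernel is spanned by the last canonical basis vector ${\mathbf e}\in{\mathbb C}^{2N-M}$. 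By (\ref{1.4}) the column space of ${\mathbf H}_{2N-M,M+1}$ equals that of ${\mathbf V}_{2N-M,M}({\mathbf z})$, and it does not contain ${\mathbf e}$: an identity $\sum_{j}c_{j}(z_{j}^{m})_{m=0}^{2N-M-1}={\mathbf e}$ would force $\sum_{j}c_{j}z_{j}^{m}=0$ for $m=0,\dots,2N-M-2$, hence ${\mathbf c}={\mathbf 0}$ since $2N-M-1\ge M$ makes the corresponding Vandermonde system overdetermined, contradicting the last component $\sum_{j}c_{j}z_{j}^{2N-M-1}=1$. Thus the column space of ${\mathbf H}_{2N-M,M+1}$ meets $\ker{\mathbf Q}_{2N-M-1,2N-M}({\mathbf u}^{(1)})$ only in ${\mathbf 0}$, the rank is preserved, and $\mathrm{rank}\,{\mathbf L}_{2N-M-1,M+1}=M$.

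I expect the main obstacle to be the middle step: tracking the index reversal that turns $(\omega_{2N}^{k}\hat f_{k})$ into divided differences of $\tilde P$, and—above all—spotting the factorization of $z^{2N}-1$ forced by the partition, which is the concrete form of the Fiedler correspondence between Hankel and Loewner matrices. Once that identity is in hand, the rest (degrees of the $q_{{\mathbf u},\ell}$, placement of the zero column, the coefficient comparison, and the rank argument) is routine bookkeeping.
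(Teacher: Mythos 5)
Your proof is correct, and it takes a genuinely different route from the paper. The paper proves (\ref{5.2}) structurally: it inserts the Vandermonde factorization (\ref{1.4}) of ${\mathbf H}_{2N-M,M+1}$, uses (\ref{5.1}) to turn ${\mathbf Q}{\mathbf V}$ into Cauchy-type matrices $\bigl(q_{\mathbf u}(z_j)/(z_j-u_\ell)\bigr)$, and then resums via partial fractions to recognize the rational form (\ref{1.11}) of $\omega_{2N}^{k}\hat f_k$ — which forces a separate limiting argument (via (\ref{lop})) when some $z_j^{2N}=1$. You instead verify (\ref{5.2}) entrywise as a polynomial-coefficient identity: the Loewner entries are divided differences of the reversed data polynomial $\tilde P$, the right-hand side extracts coefficients of $q_{{\mathbf u}^{(1)},\ell}\,q_{{\mathbf u}^{(2)},k}$, and the partition $S_{M+1}\cup\Gamma_{M+1}=I$ identifies that product with $(z^{2N}-1)/\bigl((z-\omega_{2N}^{-\ell})(z-\omega_{2N}^{-k})\bigr)$. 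This buys two things: the factorization holds for an arbitrary data vector ${\mathbf f}$ (the exponential-sum structure of $f$ is not needed for (\ref{5.2}) at all, only for the rank statement), and no case distinction for $z_j^{2N}=1$ is required — which is precisely the generality the paper later leans on in Theorem \ref{corconv}. Your rank argument is also more careful than the paper's one-line claim: since ${\mathbf Q}_{2N-M-1,2N-M}({\mathbf u}^{(1)})$ is wide with a one-dimensional kernel, full row rank alone does not preserve the rank of ${\mathbf H}$ under left multiplication, and your verification that the kernel (spanned by the last canonical basis vector) meets the column space of ${\mathbf V}_{2N-M,M}({\mathbf z})$ only trivially is exactly the missing detail; it again works independently of the location of the knots $z_j$.
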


\begin{proof}
Recall from (\ref{1.4}) that 
$$ {\mathbf H}_{2N-M,M+1} = {\mathbf V}_{2N-M, M}  ({\mathbf z}) \, \textrm{diag}  \left( (\gamma_{j})_{j=1}^{M} \right) \, {\mathbf V}_{M+1,M}({\mathbf z})^{T},$$
where ${\mathbf z} = (z_{j})_{j=1}^{M}$ and $(\gamma_{j})_{j=1}^{M}$ are the parameters determining $f$ in $(\ref{1.1})$ and rank ${\mathbf H}_{2N-M,M+1} = M$.
By (\ref{5.1}), (\ref{uvec}),  and $(\ref{1.11})$ it follows for $z_j^{2N}\neq 1$ that
\begin{eqnarray*}
&& \hspace*{-15mm} {\mathbf Q}_{2N-M-1, 2N-M}({\mathbf u}^{(1)}) \, {\mathbf H}_{2N-M,M+1} \, {\mathbf Q}_{M+1,M+1} ({\mathbf u}^{(2)})^{T} \\
&=& {\mathbf Q}_{2N-M-1, 2N-M}({\mathbf u}^{(1)}) \, {\mathbf V}_{2N-M, M}  ({\mathbf z}) \, \textrm{diag}  \Big( (\gamma_{j})_{j=1}^{M} \Big) \, {\mathbf V}_{M+1,M}({\mathbf z})^{T} \, {\mathbf Q}_{M+1,M+1}({\mathbf u}^{(2)})^{T} \\
&=& \left( \frac{q_{{\mathbf u}^{(1)}}(z_{j})}{z_{j}-w_{2N}^{-\ell}} \right)_{\ell \in \Gamma_{M+1}, j=1, \ldots , M} 
\textrm{diag}  \left((\gamma_{j})_{j=1}^{M} \right) \,  \left( \frac{q_{{\mathbf u}^{(2)}}(z_{j})}{z_{j}-w_{2N}^{-k}} \right)^{T}_{k \in S_{M+1}, j=1, \ldots , M}  \\
&=& \left( \sum_{j=1}^{M} \gamma_{j} \frac{ \prod_{r=0}^{2N-1} (z_{j}- \omega_{2N}^{-r})}{(z_{j} - \omega_{2N}^{-\ell}) (z_{j} - \omega_{2N}^{-k})}  \right)_{\ell \in \Gamma_{M+1}, k \in S_{M+1}} \\
&=& \left( \sum_{j=1}^{M} \frac{\gamma_{j}(z_{j}^{2N} -1)}{(\omega_{2N}^{-\ell} - \omega_{2N}^{-k})} \left(  \frac{ 1}{z_{j} - \omega_{2N}^{-\ell}} - \frac{1}{z_{j} - \omega_{2N}^{-k}} \right)\right)_{\ell \in \Gamma_{M+1}, k \in S_{M+1}} \\
&=& \left( \frac{\sum\limits_{j=1}^{M} \gamma_{j}\left( \frac{1-z_{j}^{2N}}{\omega_{2N}^{-\ell} - z_{j}}\right)  -  \sum\limits_{j=1}^{M} \gamma_{j}\left( \frac{1-z_{j}^{2N}}{\omega_{2N}^{-k} - z_{j}}\right)  }{ \omega_{2N}^{-\ell} - \omega_{2N}^{-k}}\right)_{\ell \in \Gamma_{M+1}, k \in S_{M+1}} \\
&=&\left( \frac{\omega_{2N}^{\ell} \, \hat{f}_{\ell} - \omega_{2N}^{k} \, \hat{f}_{k} }{\omega_{2N}^{-\ell} - \omega_{2N}^{-k}} \right)_{\ell \in \Gamma_{M+1}, k \in S_{M+1}} = {\mathbf L}_{2N-M-1,M+1}.
\end{eqnarray*}
In the case $z_j^{2N}=1$ we apply (\ref{lop}) and similar considerations.
The rank condition follows directly by observing that rank ${\mathbf Q}_{2N-M-1, 2N-M}({\mathbf u}^{(1)})= 2N-M-1$ and rank ${\mathbf Q}_{M+1,M+1} ({\mathbf u}^{(2)})^{T} = M+1$.
\end{proof}

Similarly, we can  transform the two submatrices ${\mathbf H}_{2N-M, M}(0)$ and ${\mathbf H}_{2N-M, M}(1)$ applied in  MPM and  ESPRIT into Loewner matrices with index sets $S_{M}$ and $\Gamma_{M} = I \setminus S_{M}$, where $S_{M}$ contains $M$ indices of $I$. In practice, such  a partition will be obtained by applying the iterative greedy search of interpolation indices in Algorithm \ref{alg2}. Then we have

\begin{theorem}\label{01}
Let 
$${\mathbf H}_{2N-M, M}(0) = (f_{k+\ell})_{\ell,k=0}^{2N-M-1,M-1}, \qquad {\mathbf H}_{2N-M, M}(1) = (f_{k+\ell+1})_{\ell,k=0}^{2N-M-1,M-1}$$
be the Hankel matrices determined by ${\mathbf f} = (f_{k})_{k=0}^{2N-1}$. Let $S_{M} \cup \Gamma_{M} = \{0, \ldots , 2N-1\} \coloneqq I$ be a given partition of $I$, where $S_{M}$ contains $M$ indices and $\Gamma_{M}$ the remaining $2N-M$ indices.
Let ${\mathbf u}^{(1)} \coloneqq (\omega_{2N}^{-\ell})_{\ell \in \Gamma_{M}}$ and ${\mathbf u}^{(2)} \coloneqq (\omega_{2N}^{-k})_{k \in S_{M}}$.
Then
\begin{eqnarray}\nonumber
{\mathbf L}_{2N-M,M}(0) &\coloneqq&  \!\!\left( \frac{\omega_{2N}^{\ell} \, \hat{f}_{\ell} - \omega_{2N}^{k} \, \hat{f}_{k} }{\omega_{2N}^{-\ell} - \omega_{2N}^{-k}} \right)_{\ell \in \Gamma_{M}, k \in S_{M}}  \\
&=& {\mathbf Q}_{2N-M,2N-M}({\mathbf u}^{(1)}) \, {\mathbf H}_{2N-M,M}(0) \, {\mathbf Q}_{M,M}({\mathbf u}^{(2)})^{T}, 
\label{L1}
\end{eqnarray}
and 
\begin{eqnarray}\nonumber
{\mathbf L}_{2N-M,M}(1) &\coloneqq &  \left( \frac{ \hat{f}_{\ell} -  \hat{f}_{k} }{\omega_{2N}^{-\ell} - \omega_{2N}^{-k}} \right)_{\ell \in \Gamma_{M}, k \in S_{M}}  \\
\label{L2}
&=& {\mathbf Q}_{2N-M,2N-M}({\mathbf u}^{(1)}) \, {\mathbf H}_{2N-M,M}(1) \, {\mathbf Q}_{M,M}({\mathbf u}^{(2)})^{T},
\end{eqnarray}
where ${\mathbf Q}_{2N-M,2N-M}({\mathbf u}^{(1)})$ and ${\mathbf Q}_{M,M}({\mathbf u}^{(2)})$ are defined as in $(\ref{W})$. 
\end{theorem}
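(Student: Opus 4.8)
The plan is to imitate the proof of Theorem~\ref{fac}, but to start now from the Vandermonde factorizations $(\ref{extra})$ and $(\ref{extra1})$ of the sub-Hankel matrices ${\mathbf H}_{2N-M,M}(0)$ and ${\mathbf H}_{2N-M,M}(1)$ (the case $L=M$) instead of from $(\ref{1.4})$. First I would note that ${\mathbf u}^{(1)}\in{\mathbb C}^{2N-M}$ and ${\mathbf u}^{(2)}\in{\mathbb C}^{M}$, and that the size hypotheses $L\ge M$, $L\ge K$ needed to apply $(\ref{5.1})$ hold both for ${\mathbf Q}_{2N-M,2N-M}({\mathbf u}^{(1)})\,{\mathbf V}_{2N-M,M}({\mathbf z})$ (here $2N-M\ge M$ because $N>M$) and, after transposition, for ${\mathbf Q}_{M,M}({\mathbf u}^{(2)})\,{\mathbf V}_{M,M}({\mathbf z})$. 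Sandwiching ${\mathbf H}_{2N-M,M}(0)$ between ${\mathbf Q}_{2N-M,2N-M}({\mathbf u}^{(1)})$ and ${\mathbf Q}_{M,M}({\mathbf u}^{(2)})^{T}$, inserting $(\ref{extra})$ and applying $(\ref{5.1})$ to both outer factors, I obtain
\[
{\mathbf Q}_{2N-M,2N-M}({\mathbf u}^{(1)})\,{\mathbf H}_{2N-M,M}(0)\,{\mathbf Q}_{M,M}({\mathbf u}^{(2)})^{T}
= \Big( \sum_{j=1}^{M}\gamma_{j}\,\frac{q_{{\mathbf u}^{(1)}}(z_{j})\,q_{{\mathbf u}^{(2)}}(z_{j})}{(z_{j}-\omega_{2N}^{-\ell})(z_{j}-\omega_{2N}^{-k})}\Big)_{\ell\in\Gamma_{M},\,k\in S_{M}}.
\]
The crucial elementary observation is that $S_{M}\cup\Gamma_{M}=I$, hence $q_{{\mathbf u}^{(1)}}(z)\,q_{{\mathbf u}^{(2)}}(z)=\prod_{r=0}^{2N-1}(z-\omega_{2N}^{-r})=z^{2N}-1$. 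From here the partial fraction identity $\big((z-a)(z-b)\big)^{-1}=(a-b)^{-1}\big((z-a)^{-1}-(z-b)^{-1}\big)$ with $a=\omega_{2N}^{-\ell}$, $b=\omega_{2N}^{-k}$, together with $\sum_{j=1}^{M}\gamma_{j}(z_{j}^{2N}-1)(z_{j}-\omega_{2N}^{-\ell})^{-1}=\omega_{2N}^{\ell}\hat f_{\ell}$ from $(\ref{1.11})$, reproduces verbatim the chain of equalities already carried out for $(\ref{5.2})$ and yields $(\ref{L1})$.

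For $(\ref{L2})$ I would run the same computation, but starting from $(\ref{extra1})$, so that the inner diagonal matrix carries the entries $\gamma_{j}z_{j}$ instead of $\gamma_{j}$. After the identical two steps one arrives at the matrix with $(\ell,k)$ entry
\[
\frac{1}{\omega_{2N}^{-\ell}-\omega_{2N}^{-k}}\Big( \sum_{j=1}^{M}\frac{\gamma_{j}z_{j}(z_{j}^{2N}-1)}{z_{j}-\omega_{2N}^{-\ell}} - \sum_{j=1}^{M}\frac{\gamma_{j}z_{j}(z_{j}^{2N}-1)}{z_{j}-\omega_{2N}^{-k}} \Big), \qquad \ell\in\Gamma_{M},\ k\in S_{M}.
\]
The new ingredient here is the splitting $z_{j}=(z_{j}-\omega_{2N}^{-k})+\omega_{2N}^{-k}$, which decomposes $\sum_{j}\gamma_{j}z_{j}(z_{j}^{2N}-1)(z_{j}-\omega_{2N}^{-k})^{-1}$ into the index-independent constant $C\coloneqq\sum_{j=1}^{M}\gamma_{j}(z_{j}^{2N}-1)$ plus $\omega_{2N}^{-k}\sum_{j}\gamma_{j}(z_{j}^{2N}-1)(z_{j}-\omega_{2N}^{-k})^{-1}=\hat f_{k}$ by $(\ref{1.9})$. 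Since $C$ is the same for all $\ell$ and $k$, it cancels in the Loewner difference, the numerator collapses to $\hat f_{\ell}-\hat f_{k}$, and $(\ref{L2})$ follows. For knots with $z_{j}^{2N}=1$ the intermediate manipulations are justified by the L'Hospital limit $(\ref{lop})$, exactly as at the end of the proof of Theorem~\ref{fac}.

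The routine part is the algebra, which is essentially a transcription of the derivation of $(\ref{5.2})$. The one genuinely new piece of bookkeeping — and the step I expect to require the most care — is the treatment of $(\ref{L2})$: one must not discard the constant $C$ prematurely, but carry it through both sums and notice its cancellation in the difference, and one should double-check that the extra factor $z_{j}$ appearing in the numerator, which would in principle raise the type of the underlying rational function, is precisely absorbed by this constant. A secondary point worth stating explicitly is that the size constraints needed for $(\ref{5.1})$ — in particular $2N-M\ge M$ for the left factor — hold, which is exactly where the hypothesis $N>M$ enters.
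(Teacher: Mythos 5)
Your proof is correct, but it takes a route that differs from the paper's in the key step. You redo the Theorem~\ref{fac} computation directly from the truncated factorizations (\ref{extra})--(\ref{extra1}) with $L=M$: sandwiching between the ${\mathbf Q}$-matrices, applying (\ref{5.1}) on both sides, using $q_{{\mathbf u}^{(1)}}(z)q_{{\mathbf u}^{(2)}}(z)=z^{2N}-1$ and partial fractions, and—for (\ref{L2})—absorbing the extra diagonal factor $z_j$ via the splitting $z_j=(z_j-\omega_{2N}^{-k})+\omega_{2N}^{-k}$, so that the index-independent constant $\sum_j\gamma_j(z_j^{2N}-1)$ cancels in the Loewner difference and (\ref{1.9})/(\ref{1.11}) yield the numerators $\hat f_\ell-\hat f_k$. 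The paper instead does not touch the shifted Hankel structure algebraically: it applies the Theorem~\ref{fac} factorization to the modified, zero-padded data vectors ${\mathbf f}^{(0)}=(f_0,\ldots,f_{2N-2},0)^T$ and ${\mathbf f}^{(1)}=(f_1,\ldots,f_{2N-1},0)^T$, whose Hankel matrices are exactly ${\mathbf H}_{2N-M,M}(0)$ and ${\mathbf H}_{2N-M,M}(1)$, and then verifies the two elementary DFT identities $\omega_{2N}^{\ell}\hat f^{(0)}_\ell-\omega_{2N}^{k}\hat f^{(0)}_k=\omega_{2N}^{\ell}\hat f_\ell-\omega_{2N}^{k}\hat f_k$ and $\omega_{2N}^{\ell}\hat f^{(1)}_\ell-\omega_{2N}^{k}\hat f^{(1)}_k=\hat f_\ell-\hat f_k$, which hold for arbitrary data. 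Your version is more self-contained and makes explicit where the type-raising factor $z_j$ goes (into a constant invisible to the Loewner structure), at the price of repeating the partial-fraction algebra; the paper's version reuses the earlier computation verbatim and isolates the shift entirely in simple DFT manipulations of the data vector. Your handling of the degenerate knots $z_j^{2N}=1$ by the limit (\ref{lop}) matches the level of detail in the paper's own proofs, and your remark that $N>M$ is needed so that $2N-M\ge M$ for (\ref{5.1}) is a correct and worthwhile observation.
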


\begin{proof}
First consider ${\mathbf H}_{2N-M, M}(0) = (f_{k+\ell})_{\ell,k=0}^{2N-M-1,M-1}$, which is defined by the components $f_{k}$, $k=0, \ldots , 2N-2$.
In particular, this matrix does not contain $f_{2N-1}$. Therefore, we consider instead of ${\mathbf f}= (f_{k})_{k=0}^{2N-1}$  and $\hat{\mathbf f} = (\hat{f}_{k})_{k=0}^{2N-1} = {\mathbf F}_{2N} {\mathbf f}$ the vector ${\mathbf f}^{(0)} \coloneqq  (f_{0}, \ldots, f_{2N-2}, 0)^{T}$. Let $\hat{\mathbf  f}^{(0)} = (\hat{f}_{k}^{(0)})_{k=0}^{2N-1}= {\mathbf F}_{2N} \, {\mathbf f}^{(0)}$ be the discrete Fourier  transform of ${\mathbf f}^{(0)}$. 
Then we obtain the factorization 
$$
\left( \frac{\omega_{2N}^{\ell} \, \hat{f}^{(0)}_{\ell} - \omega_{2N}^{k} \, \hat{f}^{(0)}_{k} }{\omega_{2N}^{-\ell} - \omega_{2N}^{-k}} \right)_{\ell \in \Gamma_{M}, k \in S_{M}}  \\
= {\mathbf Q}_{2N-M,2N-M}({\mathbf u}^{(1)}) \, {\mathbf H}_{2N-M,M}(0) \, {\mathbf Q}_{M,M}({\mathbf u}^{(2)})^{T}, 
$$
analogously as in the proof of Theorem \ref{fac}. Further, we observe that 
\begin{eqnarray*}
\omega_{2N}^{\ell} \, \hat{f}^{(0)}_{\ell} - \omega_{2N}^{k} \, \hat{f}^{(0)}_{k} &=&  
\omega_{2N}^{\ell} \, \Big(\hat{f}_{\ell} - \omega_{2N}^{\ell(2N-1)} \, f_{2N-1}\Big) - \omega_{2N}^{k} \, \Big(\hat{f}_{k} - \omega_{2N}^{k(2N-1)}f_{2N-1} \Big)\\
&=& \omega_{2N}^{\ell} \, \hat{f}_{\ell} - \omega_{2N}^{k} \, \hat{f}_{k}
\end{eqnarray*}
for all $\ell, \, k \in I$, 
and the factorization of ${\mathbf L}_{2N-M,M}(0)$ follows.

For ${\mathbf H}_{2N-M, M}(1) = (f_{k+\ell+1})_{\ell,k=0}^{2N-M-1,M-1}$, only the components $f_{k}$, $k=1, \ldots , 2N-1$, are needed.
Therefore, we consider $\hat{\mathbf  f}^{(1)} = {\mathbf F}_{2N} \, {\mathbf f}^{(1)}$ with  ${\mathbf f}^{(1)} \coloneqq (f_{1}, f_{2}, \ldots, f_{2N-1}, 0)^{T}$ of length $2N$ and can conclude analogously as in the proof of Theorem \ref{fac} the factorization 
$$   \left( \frac{ \omega_{2N}^{\ell} \, \hat{f}_{\ell}^{(1)} -  \omega_{2N}^{k} \, \hat{f}_{k}^{(1)} }{\omega_{2N}^{-\ell} - \omega_{2N}^{-k}} \right)_{\ell \in \Gamma_{M}, k \in S_{M}}  
= {\mathbf Q}_{2N-M,2N-M}({\mathbf u}^{(1)}) \, {\mathbf H}_{2N-M,M}(1) \, {\mathbf Q}_{M,M}({\mathbf u}^{(2)})^{T}.
$$
Finally, we show that $ \omega_{2N}^{\ell} \, \hat{f}_{\ell}^{(1)} -\omega_{2N}^{k} \, \hat{f}_{k}^{(1)} =   \hat{f}_{\ell} -   \hat{f}_{k}$, for all $k,\ell \in I$, which in turn leads to (\ref{L2}).
Indeed,
\begin{eqnarray*}
\omega_{2N}^{\ell} \, \hat{f}_{\ell}^{(1)} -\omega_{2N}^{k} \, \hat{f}_{k}^{(1)} &=&
 \sum_{r=0}^{2N-2}  \left( f_{r+1} \, \omega_{2N}^{\ell (r+1)} -  f_{r+1} \, \omega_{2N}^{k(r+1)} \right)
= \sum_{r=1}^{2N-1}  f_{r} \, (\omega_{2N}^{\ell r} -   \omega_{2N}^{kr}) \\
&=& (\hat{f}_{\ell} - f_{0} ) - (\hat{f}_{k} - f_{0}) = \hat{f}_{\ell} - \hat{f}_{k}.
\end{eqnarray*}
\end{proof} 

\begin{remark} 
1. For ${\mathbf u} \in {\mathbb C}^{K}$, ${\mathbf z} \in {\mathbb C}^{M}$ and $L \ge \max\{K,M\}$,  we define
$$
{\mathbf A}_{K,M}({\mathbf u},  {\mathbf z}) \coloneqq
{\mathbf Q}_{K,L} ({\mathbf u} ) \, {\mathbf V}_{L,M}({\mathbf z})  = \Big(\prod\limits_{\substack{ n=1 \\ n \neq \ell }}^{K} (z_j-u_n)  \Big)_{\ell,j=1}^{K,M}.
$$
Then Theorems \ref{fac} and \ref{01} imply the following factorizations for Loewner matrices,
\begin{align*}
{\mathbf L}_{2N-M-1,M+1} &={\mathbf A}_{2N-M-1,M}({\mathbf u}^{(1)},  {\mathbf z}) \,  \textrm{diag}   \left( (\gamma_{j})_{j=1}^{M} \right) \, {\mathbf A}_{M+1,M}({\mathbf u}^{(2)},  {\mathbf z})^{T}, \\
{\mathbf L}_{2N-M,M}(0)&={\mathbf A}_{2N-M,M}({\mathbf u}^{(1)},  {\mathbf z}) \,  \textrm{diag}  \left( (\gamma_{j})_{j=1}^{M} \right) \, {\mathbf A}_{M,M}({\mathbf u}^{(2)},  {\mathbf z})^{T}, \\
{\mathbf L}_{2N-M,M}(1)&={\mathbf A}_{2N-M,M}({\mathbf u}^{(1)},  {\mathbf z}) \,  \textrm{diag}   \left( (\gamma_{j} z_j)_{j=1}^{M} \right) \, {\mathbf A}_{M,M}({\mathbf u}^{(2)},  {\mathbf z})^{T},
\end{align*}
where  ${\mathbf z} = (z_{j})_{j=1}^{M}$ and $(\gamma_{j})_{j=1}^{M}$ are the parameters that determine $f$ in $(\ref{1.1})$ and  ${\mathbf u}^{(1)}$ and ${\mathbf u}^{(2)}$ are defined in (\ref{uvec}).

2. Theorems \ref{fac} and \ref{01}  can be seen as special cases of Theorem 12 in \cite{Fiedler}. For the proof of Theorem 12, Fiedler referred to an unpublished manuscript that we have not been able to find.  A similar factorization as in (\ref{5.2}) has been also used in \cite{AA86} to show the rank condition for Loewner matrices, see Remark \ref{remrank}.
\end{remark} 

\subsection{ESPIRA as a Matrix Pencil Method}

As seen in Section \ref{sec6.1}, the two new matrices ${\mathbf L}_{2N-M,M}(0)$ and ${\mathbf L}_{2N-M,M}(1)$ in (\ref{L1}) and (\ref{L2}) are obtained as matrix  products involving ${\mathbf H}_{2N-M, M}(0)$ and ${\mathbf H}_{2N-M, M}(1)$, respectively, where the two square matrices ${\mathbf Q}_{2N-M,2N-M}({\mathbf u}^{(1)})$ and ${\mathbf Q}_{M,M}({\mathbf u}^{(2)})$ in Theorem \ref{01} are invertible. In particular, we conclude that the matrix pencil $z{\mathbf L}_{2N-M,M}(0)-{\mathbf L}_{2N-M,M}(1)$ possesses the same eigenvalues $z_{j}$ as the matrix pencil $z{\mathbf H}_{2N-M, M}(0)- {\mathbf H}_{2N-M, M}(1)$, which is considered in ESPRIT and MPM.

We can therefore derive a second variant of the ESPIRA algorithm, which  solves the matrix pencil problem for the new Loewner matrices instead of the  Hankel matrices. 
In order to find a suitable partition $S_{M} \cup \Gamma_{M}$ of $I$, we will apply the greedy search of interpolation indices of the iterative AAA algorithm, which improves the condition numbers of the Loewner matrices, see Algorithm \ref{algESP2}.

\begin{algorithm}[h!]\caption{ESPIRA II}
\label{algESP2}
\small{
\textbf{Input:} ${\mathbf f} = (f_{k})_{k=0}^{2N-1} = \big(f(k)\big)_{k=0}^{2N-1}$ (equidistant samples of $f$ in (\ref{1.1}) with $M  < N$)\\
\phantom{\textbf{Input:}} $tol >0$ tolerance for the (relative) approximation error

\begin{enumerate}
\item Initialization step: 
Compute $\hat{\mathbf f} \coloneqq (\hat{f}_{k})_{k=0}^{2N-1}$ by FFT. \\
Set  $\mathbf \Gamma \coloneqq \left( k \right)_{k=0}^{2N-1}$, ${\mathbf g}_{\mathbf \Gamma} \coloneqq (g_{k})_{k=0}^{2N-1} \coloneqq (\omega_{2N}^{k} \hat{f}_{k})_{k=0}^{2N-1}$, ${\mathbf S}\coloneqq []$; ${\mathbf g}_{\mathbf S}\coloneqq []$, ${\mathbf r} \coloneqq (r_{k})_{k=0}^{2N-1} \coloneqq {\mathbf 0}$; $jmax \coloneqq N$. 
\item Preconditioning step:  \\
for $j=1:$ \textit{jmax}
\begin{itemize}
\item Compute $k \coloneqq \argmax_{\ell \in \mathbf \Gamma} |r_{\ell} - g_{\ell}|$
and update ${\mathbf S} \coloneqq ({\mathbf S}^{T}, k)^{T}$, ${\mathbf g}_{\mathbf S} \coloneqq ({\mathbf g}_{\mathbf S}^{T}, g_{k})^{T}$, and delete $k$ in $\mathbf{\Gamma}$ and $g_{k}$ in ${\mathbf g}_{\mathbf \Gamma}$.
\item Build  $\mathbf{C}_{2N-j,j}\!\coloneqq\!\left( \frac{1}{\omega_{2N}^{-\ell}-\omega_{2N}^{-k}} \right)_{\ell \in \mathbf{\Gamma}, k \in \mathbf{S}}$, ${\mathbf L}_{2N-j,j}\!\coloneqq\! \left( \frac{g_{\ell} - g_{k}}{\omega_{2N}^{-\ell}-\omega_{2N}^{-k}} \right)_{\ell \in \mathbf{\Gamma}, k \in \mathbf{S}}$.
\item Compute the singular vector $
\mathbf{w}$  corresponding to the smallest singular value $\sigma_{j}$ of $\mathbf{L}_{2N-j,j}$. If $\sigma_{j} < tol \, \sigma_{1}$ (where $\sigma_{1}$ is the largest singular value of ${\mathbf L}_{2N-j,j}$), then 
set $\tilde M \coloneqq j-1$; 
delete the last entry of ${\mathbf S}$ and add it to ${\mathbf \Gamma}$;
stop.
 \item Compute $\mathbf{p} \coloneqq \mathbf{C}_{2N-j,j} ({\mathbf  w}.\ast \mathbf{g}_{\mathbf{S}})$, $\mathbf{q} \coloneqq \mathbf{C}_{2N-j,j} {\mathbf w}$ and $\mathbf{r} \coloneqq \mathbf{p}./\mathbf{q}$, where $.*$ denotes componentwise multiplication and $./$ componentwise division.
\end{itemize}
end (for)\\

\item Build the Loewner matrices  
$${\mathbf L}_{2N-\tilde{M},\tilde{M}}(0) = \left( \frac{\omega_{2N}^{\ell} \, \hat{f}_{\ell} - \omega_{2N}^{k} \, \hat{f}_{k} }{\omega_{2N}^{-\ell} - \omega_{2N}^{-k}} \right)_{\ell \in {\mathbf \Gamma}, k \in {\mathbf S}} , \quad
{\mathbf L}_{2N-\tilde{M},\tilde{M}}(1) = \left( \frac{ \hat{f}_{\ell} -  \hat{f}_{k} }{\omega_{2N}^{-\ell} - \omega_{2N}^{-k}} \right)_{\ell \in {\mathbf \Gamma}, k \in {\mathbf S}},$$
and the joint matrix ${\mathbf L}_{2N-\tilde{M},2\tilde{M}} \coloneqq \left(  {\mathbf L}_{2N-\tilde{M},\tilde{M}}(0) , {\mathbf L}_{2N-\tilde{M},\tilde{M}}(1) \right) \in {\mathbb C}^{2N-\tilde{M},2\tilde{M}}$.

\item
Compute the SVD  ${\mathbf L}_{2N-\tilde{M},2\tilde{M}} = {\mathbf U}_{2N-\tilde{M}} \, {\mathbf D}_{2N-\tilde{M},2\tilde{M}} \, {\mathbf W}_{2\tilde{M}}$ (where ${\mathbf U}_{2N-\tilde{M}}$ and ${\mathbf W}_{2\tilde{M}}$ are unitary square matrices).\\
Determine the numerical rank $M$ of ${\mathbf D}_{2N-\tilde{M},2\tilde{M}}$ by taking the smallest $M$ such that $\sigma_{M+1} < tol \, \sigma_{1}$, where $\sigma_{1} \ge \sigma_{2} \ge \ldots \ge \sigma_{2\tilde{M}}$ are the ordered diagonal entries of ${\mathbf D}_{2N-\tilde{M},2\tilde{M}}$.\\
 Determine the vector of eigenvalues ${\mathbf z} =(z_{1}, \ldots , z_{M})^{T}$ of 
$$\left(  {\mathbf W}_{2\tilde{M}}(1:2\tilde{M}, 1:M) \right)^{\dagger}  {\mathbf W}_{2\tilde{M}}(1:2\tilde{M}, M+1:2M), $$
where $\Big({\mathbf W}_{2\tilde{M}}(1:2 \tilde{M}, 1:M)\Big)^{\dagger}$ denotes the Moore-Penrose inverse of  ${\mathbf W}_{2\tilde{M}}(1:2\tilde{M}, 1:M) $.
\item Compute ${\bgamma} = (\gamma_{j})_{j=1}^{M}$  as the least squares solution of the linear system 
$ {\mathbf V}_{2N,M} ({\mathbf z}) \, \bgamma = {\mathbf f}. $
\end{enumerate}

\noindent
\textbf{Output:} $M \in {\mathbb N}$,  $z_{j}, \, \gamma_{j} \in {\mathbb C}$, $j=1, \ldots , M$.}
\end{algorithm}

Note that the preconditioning step 2 in Algorithm \ref{algESP2} is still the core AAA algorithm for rational approximation.
But we do not need to compute the rational function $r$ and its fractional decomposition, since we only use the obtained index sets $S_{M}$ and $\Gamma_{M} = I \setminus S_{M}$ to build the Loewner matrices. For exact data the needed number of iterations $\tilde{M}+1$ in the preconditioning step corresponds to the number of terms in (\ref{1.1}), i.e., we have  $\tilde{M} =M$ according to Theorem \ref{corconv}.
One advantage of ESPIRA-II  compared to ESPIRA-I is that we completely avoid any case study. All wanted knots $z_{j}$ are now solutions of the eigenvalue  problem in step 4, 
and we don't need to consider parameters of the form $z_{j} = \omega_{2N}^{k}$ with a post-processing procedure. 
Compared to MPM and ESPRIT, the essential advantage is that  we only need to compute the FFT of a vector of size $2N$ and the SVD of a matrix of size $(2N-{M}, 2 M)$.  Analogously as for ESPIRA-I,  we therefore have an overall computational cost of ${\mathcal O}(N (M^{3}+\log N))$ while MPM and ESPRIT in Section 2 require ${\mathcal O}(N L^{2})$ flops, and  good recovery results are only achieved with $L\approx N$. 

\begin{remark}\label{cauchyrem}
1. Similarly as in Algorithm \ref{algMPM}, we can apply a QR decomposition of ${\mathbf L}_{2N-M,2M}$ instead of the SVD in step 4 of Algorithm \ref{algESP2}.\\
If all $z_{j}$ satisfy $z_{j}^{2N} \neq 1$, then we can replace  step 5 in Algorithm \ref{algESP2} by \\[1ex]
{\small 5'. Solve the linear system 
$${\mathbf C}_{2N,M} {\mathbf a} = \Big( \omega_{2N}^{k} \hat{f}_{k} \Big)_{k =0}^{2N-1}$$
 with  the Cauchy matrix ${\mathbf C}_{2N, M} \coloneqq \Big( \frac{1}{\omega_{2N}^{-k} - z_{j}} \Big)_{k =0, j=1}^{2N-1,M}$  and set $\gamma_{j} \coloneqq \frac{a_{j}}{1-z_{j}^{2N}}$, $j=1, \ldots , M$,}\\[1ex]
  see also Remark \ref{rem3}(2). This Cauchy matrix usually has a better condition than ${\mathbf V}_{2N,M}({\mathbf z})$ for $|z_{j}| \neq 1$, see e.g. Example \ref{exdiri}.
  
 2. In \cite{IA9}, an overview of system identification methods based on matrix pencils is given.  In that setting, the Loewner matrix pencils and Hankel matrix pencils correspond to frequency domain and time domain, respectively. These observations are in line with our results, where the Loewner matrices in Algorithm \ref{algESP2} are computed from the DFT vector $\hat{\mathbf f}$ while the Hankel matrices in Algorithms \ref{algMPM} and \ref{algESPRIT} are determined by the vector of function values ${\mathbf f}$. 
\end{remark}

\section{Numerical experiments}

In this section we compare the performance of the known MPM and ESPRIT algorithms with our new ESPIRA algorithms for exact and noisy input data. All algorithms are implemented in \textsc{Matlab} and use IEEE standard floating point arithmetic with double precision. 
The \textsc{Matlab} implementations can be found on \texttt{http://na.math.uni-goettingen.de} under \texttt{software}.  Let the  signal $f$ be given as in (\ref{1.1}),  
$$ f(t)=\sum_{j=1}^M  \gamma_{j} \, {\mathrm e}^{ \phi_j t} = \sum_{j=1}^{M} \gamma_{j} \, z_{j}^{t},  
$$
with $\gamma_{j} \in \cc \setminus \{0\}$ and pairwise distinct $\phi_{j} \in {\mathbb C}$ with $\mathrm{Im}\, \phi_j \in [-\pi,\pi]$. As above let $z_j \coloneqq {\mathrm e}^{ \phi_j } $.
We use the notations $\tilde{f}$, $\tilde{z}_j$, $\tilde{\phi}_j$ and $\tilde{\gamma}_j$ for the exponential sum, the knots, frequencies and coefficients, respectively, reconstructed  by the algorithms.
We define the relative errors by the following formulas, as in \cite{PT2013}. Let 
$$
e(f) \coloneqq \frac{\max|f(t)-\tilde{f}(t)|}{\max|f(t)|}
$$
be  the relative error of the exponential sum, where the maximum is taken over the equidistant points in $[0, 2N-1]$ with step size $0.001$. Further, we denote with 
$$
e(\boldsymbol{z}) \coloneqq \frac{\max\limits_{j=1,\ldots,M}|z_j-\tilde{z}_j|}{\max\limits_{j=1,\ldots,M}|z_j|} ,
\quad  e(\bphi) \coloneqq  \frac{\max\limits_{j=1,\ldots,M}|\phi_j-\tilde{\phi}_j|}{\max\limits_{j=1,\ldots,M}|\phi_j|} ,
\quad\text{and}\quad e(\gamra) \coloneqq \frac{\max\limits_{j=1,\ldots,M}|\gamma_j-\tilde{\gamma}_j|}{\max\limits_{j=1,\ldots,M}|\gamma_j|}
$$
 the relative errors for the  knots $z_j$, the  frequencies $\phi_j$, and the coefficients $\gamma_j$.

\subsection{Experiments for exact data}

First, we consider three examples from \cite{PT2013}.  In all examples for MPM and ESPRIT we take the upper bound $L$ for the order of exponential sums $M$ to be $L=N$, where $2N$ is the number of given function samples, in order to achieve optimal results for these methods. For  MPM we use preconditioning, see Remark \ref{remMPM}. In all examples, the number of terms $M$ is also recovered by each method.

\begin{example}\label{ex1}

Let $M=6$, $\gamma_j=j$, $j=1,\ldots,6$, and
$$
{\mathbf z} = (z_j)_{j=1}^{6}=\left(
\begin{matrix}
0.9856-0.1628 \mathrm{i}\\
0.9856+0.1628 \mathrm{i} \\
0.8976-0.4305 \mathrm{i} \\
0.8976+0.4305 \mathrm{i}\\
0.8127-0.5690\mathrm{i} \\
0.8127+0.5690\mathrm{i}
\end{matrix}
\right),
$$
where $z_j=\mathrm{e}^{\phi_j}$ in (\ref{1.1}).
We reconstruct the signal from $2N$ samples $f(k)$, $k=0,\ldots,2N-1$, by using the algorithms MPM, ESPRIT, ESPIRA-I, and ESPIRA-II. We take $L=N$ and $\varepsilon=10^{-10}$ for  MPM and ESPRIT and $tol=10^{-13}$ for ESPIRA-I and ESPIRA-II. The results are presented in Table \ref{tb1} for $N=30$ and $N=50$. We see that  for this simple example, which is used very often in testing system identification methods \cite{Baz2000, PT2013}, all algorithms work very well.

\begin{table}[h!]
\centering
\caption{\small { Results of Example \ref{ex1}.} }
\label{tb1} 
\begin{tabular}{ |p{0.3cm}||p{1.5cm}| p{1.5cm}|  p{1.5cm}|  }
 \hline
$N$ &\ \ \ \  $e(f)$ &\ \ \ \  $e(\mathbf{z})$    &\ \ \ \   $e(\gamra)$ \\[2mm]
 \hline
  \multicolumn{4}{|c|}{MPM} \\
 \hline
30 &   $1.19$e--$14$  & $2.25$e--$15$  &  $4.23$e--$14$  \\[2mm]
50 &   $2.27$e--$14$ &  $3.44$e--$15$  &   $5.58$e--$14$  \\[2mm]
 \hline
 \end{tabular}
 \qquad 
\begin{tabular}{ |p{0.3cm}||p{1.5cm}|p{1.5cm}| p{1.5cm}|   } 
 \hline
$N$ &\ \ \ \  $e(f)$  & \ \ \ \ $e({\mathbf z})$  &\ \ \ \  $e(\gamra)$ \\[2mm]
 \hline
  \multicolumn{4}{|c|}{ESPRIT} \\
 \hline
30 &   $3.48$e--$14$  & $1.48$e--$15$  &  $1.21$e--$14$  \\[2mm]
50 &   $1.27$e--$14$  &  $1.88$e--$15$  &   $4.14$e--$14$  \\[2mm]
  \hline
  \end{tabular}
\begin{tabular}{ |p{0.3cm}||p{1.5cm}|p{1.5cm}| p{1.5cm}|  } 
\hline
  \multicolumn{4}{|c|}{ESPIRA-I} \\
 \hline
30 &   $7.51$e--$15$  &  $2.02$e--$15$  &   $3.73$e--$14$  \\[2mm]
50 &   $1.95$e--$14$  &  $8.16$e--$16$  &   $7.73$e--$14$  \\[2mm]
 \hline
  \end{tabular}
 \qquad 
\begin{tabular}{ |p{0.3cm}||p{1.5cm}|p{1.5cm}| p{1.5cm}|  } 
 \hline
  \multicolumn{4}{|c|}{ESPIRA-II} \\
 \hline
30 &   $3.88$e--$15$  & $1.88$e--$15$  &  $1.40$e--$14$  \\[2mm]
50 &   $9.43$e--$15$  &  $1.10$e--$15$  &   $3.21$e--$14$  \\[2mm]
 \hline
\end{tabular}
\end{table}

\end{example}

\begin{example}\label{ex2}

We consider an exponential sum (\ref{1.1}) of order $M=6$ given by the vectors
\begin{equation}\label{par}
\bphi = (\phi_{j})_{j=1}^{6}=\frac{\mathrm{i}}{1000}(7, \, 21, \, 200, \, 201, \, 53, \, 1000)^{T}, \quad  \gamra= (\gamma_{j})_{j=1}^{6}=(6, \, 5, \, 4, \, 3, \, 2, \, 1)^{T},
\end{equation}
that has been studied also in \cite{PT2013} with a different sample step size.
We again take $\varepsilon=10^{-10}$ and $L=N$ for MPM and ESPRIT and $tol=10^{-13}$ for 
ESPIRA-I and ESPIRA-II.
We use $2N$ values $f(k)$, $k=0,\ldots,2N-1$, for the reconstruction. The results  show that our new ESPIRA algorithms recover the frequencies very well even in case of clustered knots, see Table \ref{tb2} for $N=10, \, 20, \, 30$. The reconstruction results of ESPIRA-I and ESPIRA-II are comparable to those of MPM and ESPRIT.

\begin{table}[h!]
\centering
\caption{\small {Results of Example \ref{ex2}.}}
\label{tb2}
\begin{tabular}{ |p{0.3cm}||p{1.6cm}|p{1.6cm}| p{1.6cm}|  }
 \hline
$N$ &\ \ \ \  $e(f)$  & \ \ \ \ $e({\mathbf z})$  &\ \ \ \  $e(\gamra)$ \\[2mm]
 \hline
  \multicolumn{4}{|c|}{MPM} \\
 \hline
10  & $3.40$e--$15$    &$2.44$e--$05$ & $2.55$e--$02$\\[2mm]
20  & $2.52$e--$15$    &$3.11$e--$09$ & $3.16$e--$06$\\[2mm]
30  & $7.69$e--$15$    &$5.80$e--$11$ & $5.87$e--$08$\\[2mm]
 \hline
 \end{tabular}
 \qquad 
\begin{tabular}{ |p{0.3cm}||p{1.6cm}|p{1.6cm}| p{1.6cm}|  } 
 \hline
$N$ &\ \ \ \  $e(f)$  & \ \ \ \ $e({\mathbf z})$  &\ \ \ \  $e(\gamra)$ \\[2mm]
 \hline
  \multicolumn{4}{|c|}{ESPRIT} \\
 \hline
10  & $1.22$e--$14$    &$2.75$e--$05$ & $2.88$e--$02$\\[2mm]
20  & $1.58$e--$14$    &$1.28$e--$09$ & $1.29$e--$06$\\[2mm]
30  & $6.87$e--$14$    &$1.16$e--$10$ & $1.16$e--$07$\\[2mm]
 \hline
  \end{tabular}
\begin{tabular}{ |p{0.3cm}||p{1.6cm}|p{1.6cm}| p{1.6cm}|  } 
  \hline
  \multicolumn{4}{|c|}{ESPIRA-I} \\
 \hline
10  & $4.19$e--$15$    &$1.59$e--$05$ & $2.51$e--$02$\\[2mm]
20  & $8.10$e--$15$    &$5.98$e--$10$ & $1.04$e--$06$\\[2mm]
30  & $4.04$e--$15$    &$1.10$e--$11$ & $6.25$e--$08$\\[2mm]
  \hline
   \end{tabular} \qquad 
\begin{tabular}{ |p{0.3cm}||p{1.6cm}|p{1.6cm}| p{1.6cm}|  } 
  \hline
\multicolumn{4}{|c|}{ESPIRA-II} \\
 \hline
10  & $1.33$e--$14$    &$7.71$e--$06$ & $1.00$e--$02$\\[2mm]
20  & $1.17$e--$14$    &$5.35$e--$09$ & $6.98$e--$06$\\[2mm]
30  & $1.45$e--$14$    &$3.54$e--$11$ & $4.19$e--$08$\\[2mm]
\hline
\end{tabular}
\end{table}
\end{example}

\begin{example}\label{ex3}
Next we consider an exponential sum (\ref{1.1}) of order $M=6$ given by the vectors
$$
\bphi= (\phi_{j})_{j=1}^{6} =\frac{\mathrm{i}}{1000}(200, \, 201, \, 202, \, 203, \, 204, \, 205)^{T}, \quad \gamra= (\gamma_{j})_{j=1}^{6} =(6, \, 5, \, 4, \, 3, \, 2, \, 1)^{T}.
$$
In this case all frequencies are clustered. 
We take $2N$ values $f(k)$, $k=0,\ldots,2N-1$, and choose $\varepsilon=10^{-10}$ for  MPM and ESPRIT and $tol=10^{-13}$ for ESPIRA-I and ESPIRA-II, see Table 3 for $N=400$, $500$, $600$.

\begin{table}[h!]
\centering
  \caption{\small  Results of Example \ref{ex3}. }
\label{tb3} 
\begin{tabular}{ |p{0.5cm}||p{1.6cm}|p{1.6cm}| p{1.6cm}|  } 
  \hline
$N$ &\ \ \ \  $e(f)$  & \ \ \ \ $e(\bphi)$  &\ \ \ \  $e(\gamra)$ \\[2mm]
 \hline
  \multicolumn{4}{|c|}{MPM} \\
 \hline
 400 &   $1.66$e--$13$  & $7.92$e--$06$& $6.32$e--$04$\\[2mm]
500 &   $8.13$e--$14$  & $8.85$e--$07$ & $7.71$e--$05$ \\[2mm]
600 &   $2.12$e--$13$  & $3.70$e--$08$ &$3.50$e--$06$ \\[2mm]
 \hline
 \end{tabular}
 \quad 
\begin{tabular}{ |p{0.5cm}||p{1.6cm}|p{1.6cm}| p{1.6cm}|  } 
 \hline
$N$ &\ \ \ \  $e(f)$  & \ \ \ \ $e(\bphi)$  &\ \ \ \  $e(\gamra)$ \\[2mm]
 \hline
  \multicolumn{4}{|c|}{ESPRIT} \\
 \hline
400 &   $4.09$e--$13$  & $7.26$e--$04$ & $7.28$e--$02$  \\[2mm]
500 &   $2.01$e--$13$ & $2.35$e--$05$ & $2.04$e--$03$  \\[2mm]
600 & $6.51$e--$13$  & $4.33$e--$06$ & $4.01$e--$04$  \\[2mm]
 \hline
  \end{tabular}
  \begin{tabular}{ |p{0.5cm}||p{1.6cm}|p{1.6cm}| p{1.6cm}|  }
  \hline
  \multicolumn{4}{|c|}{ESPIRA-I} \\
 \hline
400 &   $6.95$e--$14$  & $2.05$e--$05$ & $1.68$e--$03$  \\[2mm]
500 &   $6.75$e--$14$  & $6.70$e--$07$ & $6.09$e--$05$  \\[2mm]
600 &   $8.67$e--$14$  & $7.38$e--$07$ & $6.72$e--$05$  \\[2mm]
 \hline
 \end{tabular}
 \quad 
\begin{tabular}{ |p{0.5cm}||p{1.6cm}|p{1.6cm}| p{1.6cm}|  } 
 \hline
  \multicolumn{4}{|c|}{ESPIRA-II} \\
 \hline
400 &   $2.87$e--$13$  & $7.40$e--$06$ & $8.39$e--$04$  \\[2mm]
500 &   $8.85$e--$13$ & $5.95$e--$06$ & $5.35$e--$04$  \\[2mm]
600 & $3.22$e--$13$  & $1.00$e--$06$ & $9.05$e--$05$  \\[2mm]
 \hline
  \end{tabular}
\end{table}

Comparing the results in Table \ref{tb3} we see that the two ESPIRA algorithms work equally well as MPM and slightly better than ESPRIT.
However, we want to mention at this point that MPM and EPRIT are much more expensive than the two ESPIRA methods. For both, MPM and ESPRIT it is essential to choose $L=N$, which means in this case, that the SVD with computational costs of ${\mathcal O}(N^{3})$ needs to be performed, while 
ESPIRA-I and ESPIRA-II have computational costs of ${\mathcal O}(N \, (M^{3}+\log N))$ where here $M=6$. If we take $L\ll N$ for MPM to reduce the computational costs, the recovery results get worse. For example, MPM achieves with $N=600$ and $L=100$ only the errors $e(f)= 1.07$e--$09$, $e(\bphi)=3.69$e--$04$ and $e(\bgamma) = 3.54$e--$02$ instead of those in Table \ref{tb3}.
This observation  impressively shows the importance of preconditioning  achieved by the greedy choice of the index set $S_{M+1}$ (or $S_{M}$) in ESPIRA-I and ESPIRA-II, which leads  to essentially better condition numbers  of the obtained tall Loewner matrix ${\mathbf L}_{2N-M,M}$ compared to  the tall Hankel matrix ${\mathbf H}_{2N-M,M}(0)$. For $N=600$, the condition  number of ${\mathbf L}_{2N-M,M}(0)$ is lowered by a factor $10.000$ compared to the condition number of ${\mathbf H}_{2N-M,M}$.
\end{example}

\subsection{Experiments for noisy data}

In this subsection we consider noisy data where we assume that the given data $f(k)$ from (\ref{1.1}) is corrupted with additive noise, i.e., the measured data is of the form 
$$ y_{k} = f(k) + n_{k}, \qquad k=0, \ldots , 2N-1,$$
where  $n_{k}$ are assumed to be either i.i.d.\ random  variables drawn from a standard normal distribution with mean value $0$ or from a uniform distribution with mean value $0$.  We will compare the performance of MPM and ESPIRA-II.

\begin{example}\label{exuni}
Let $f$ be as in (\ref{1.1}) with $M=8$, $\gamra=(4,5,4,3,2,1,2,3)$, and 
$$\bphi =\frac{1}{1000} ( 11\mathrm{i}, 21 \mathrm{i}, 23 \mathrm{i}, 203\mathrm{i},205 \mathrm{i}, 279  \mathrm{i}, 553 \mathrm{i}, 1000\mathrm{i})^{T}.$$
The considered knots $z_{j}={\mathrm e}^{\phi_{j}}$ are all on the unit circle  and we have two clusters with $\min|\mathrm{Re}(z_{j} - z_{k}) | = \cos(\frac{42\pi}{1000}) - \cos(\frac{46\pi}{1000}) = 1.73$e--$03$ and $\min|\mathrm{Im}(z_{j} - z_{k}) | = \sin(\frac{406\pi}{1000}) - \sin(\frac{410 \pi}{1000}) = 3.58$e--$03$. 
We employ $2N$ noisy data values $y_{k}=f(k) + n_{k}$, $k=0, \ldots, 2N-1$, where the random noise
$(n_{k})_{k=0}^{2N-1}$ is generated in \textsc{Matlab} by 
\texttt{20*(rand(2N,1)-0.5)}, i.e., we take uniform noise in the interval $[-10,10]$.
We compute $10$ iterations and give the reconstruction errors for MPM and for ESPIRA-II in Table \ref{tab_noise1}.
In this case, the average SNR value of the given noisy data is 3.66, the minimal SNR is 3.44 and the maximal SNR is 3.96. The average PSNR is $12.38$.
For MPM we use $N=L=600$ as well as $N=L=800$, for ESPIRA-II we take $N=600$ and $N=800$. In both cases we assume that $M=8$ is known beforehand and we modify the algorithms slightly. For MPM we compute the QR decomposition in step 1 of Algorithm \ref{algMPM} but do not compute the numerical rank of ${\mathbf R}_{2N-L,L+1}$ and instead take the known $M$ in step 2. For ESPIRA-II we set $jmax=M+1$ in Algorithm \ref{algESP2}.

 ESPIRA-II strongly outperforms MPM.
As it can be seen in Table \ref{tab_noise1}, the ESPIRA-II algorithm provides very accurate estimates for the 8 knots, while MPM can no longer be used to obtain good parameter estimates.
In Figure \ref{plotnoise1}, we present the reconstruction result of the original function from the noisy data for  MPM  and for ESPIRA-II.
We show here only the interval $[0,50]$ with noisy input data points (red dots), the original function $f$ (black) and the reconstructed function (red and blue) for MPM and ESPIRA-II, respectively. 

\begin{table}[h!]
\centering
 \caption{\small Reconstruction error for noisy data with additive i.i.d.\ noise drawn from uniform distribution in $[-10,10]$ with average SNR of 3.66.}
  \label{tab_noise1}
\begin{tabular}{ |p{1.5cm}||p{1.6cm}|p{1.6cm}| p{1.6cm}||p{1.6cm}|p{1.6cm}| p{1.6cm}| }
 \hline
   & \multicolumn{3}{|c||}{MPM} & \multicolumn{3}{|c|}{ESPIRA-II}\\
   \hline
$N =600$ & min & max & average & min & max & average\\[2mm]
 \hline
$e(\mathrm{Re}({\mathbf z}))$ & $4.64$e--$02$    &$1.81$e--$01$ & $9.34$e--$02$ &   $2.30$e--$04$    &$9.13$e--$04$ & $4.26$e--$04$ \\[2mm]
$e(\mathrm{Im}({\mathbf z}))$  & $3.55$e--$01$    &$8.12$e--$01$ & $6.12$e--$01$ & $2.89$e--$04$    &$9.22$e--$04$ & $4.67$e--$04$ \\[2mm]
$e(\gamra)$ & $1.12$e--$00$    & $4.36$e--$00$ & $2.76$e--$00$ & $8.70$e--$02$ & $2.19$e--$01$  & $1.36$e--$01$\\[2mm]
$e(f)$ & $8.29$e--$01$    & $1.10$e--$00$ & $9.75$e--$01$ & $1.01$e--$01$ & $6.48$e--$01$  & $5.78$e--$01$\\[2mm]
 \hline
 \end{tabular}
 \begin{tabular}{ |p{1.5cm}||p{1.6cm}|p{1.6cm}| p{1.6cm}||p{1.6cm}|p{1.6cm}| p{1.6cm}| }
 \hline
   & \multicolumn{3}{|c||}{MPM} & \multicolumn{3}{|c|}{ESPIRA-II}\\
   \hline
$N =800$ & min & max & average & min & max & average\\[2mm]
 \hline
$e(\mathrm{Re}({\mathbf z}))$ & $4.10$e--$02$    &$2.23$e--$01$ & $1.13$e--$01$ &   $6.88$e--$05$    &$4.85$e--$04$ & $2.48$e--$04$ \\[2mm]
$e(\mathrm{Im}({\mathbf z}))$  & $3.62$e--$01$    &$7.67$e--$01$ & $5.32$e--$01$ & $9.56$e--$05$    &$4.33$e--$04$ & $2.15$e--$04$ \\[2mm]
$e(\gamra)$ & $1.19$e--$00$    & $3.25$e--$00$ & $2.65$e--$00$ & $6.70$e--$02$ & $1.36$e--$01$  & $9.30$e--$02$\\[2mm]
$e(f)$ & $7.67$e--$01$    & $1.12$e--$00$ & $1.02$e--$00$ & $9.70$e--$02$ & $6.70$e--$01$  & $5.88$e--$01$\\[2mm]
 \hline
 \end{tabular}
\end{table}

\begin{figure}[h!]
\includegraphics[width=7.0cm]{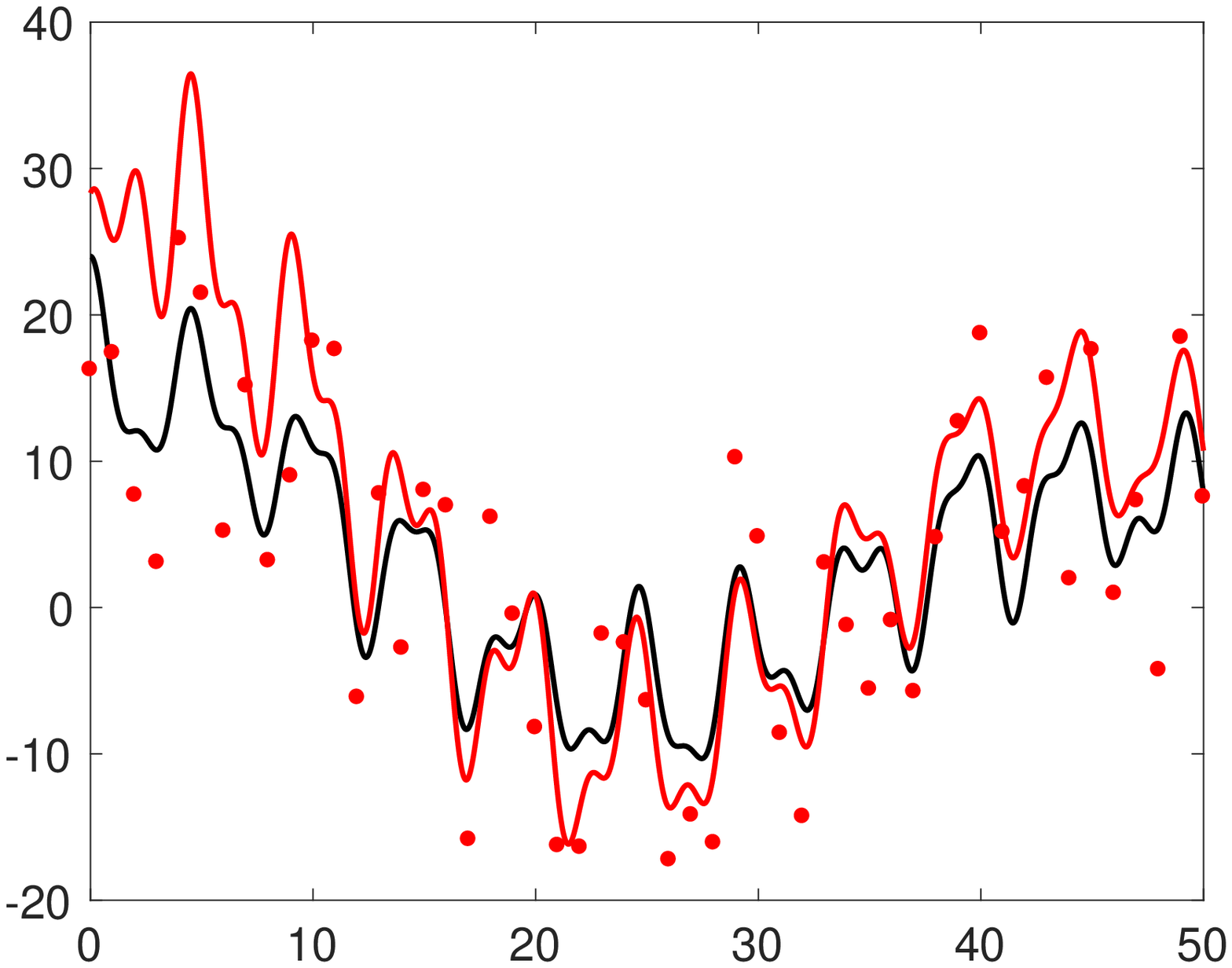}
\includegraphics[width=7.0cm]{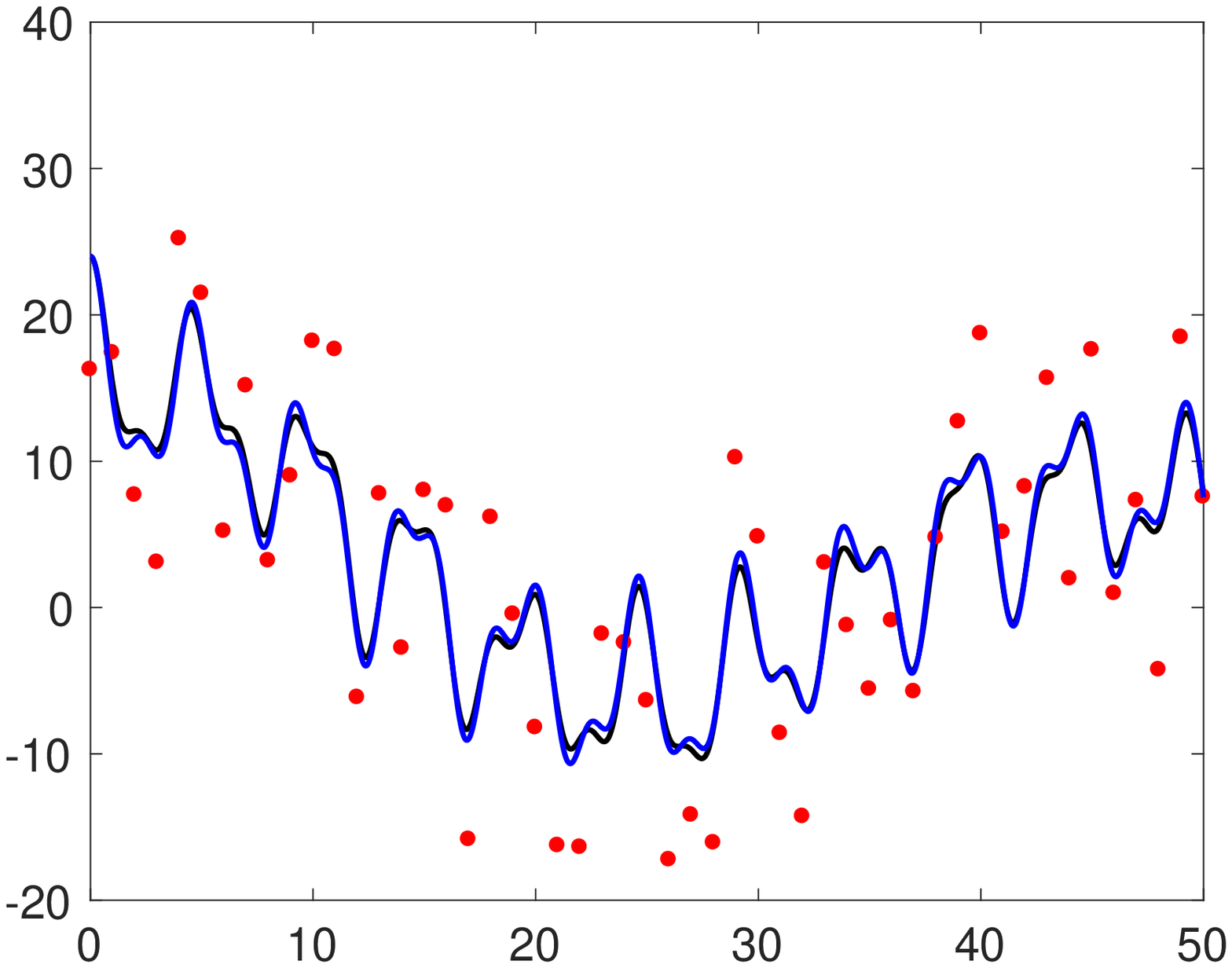}
\centering
\caption{\small Plot of the real part of the original function $f(t)$ (black), the noisy data (red dots), and the achieved reconstructions (of the real part) restricted to the interval $[0, 50]$. Left: Reconstruction by MPM (red). Right: Reconstruction by ESPIRA-II (blue). For this Example \ref{exuni}, we have $N=800$ and uniform noise in $[-10,10]$ with SNR 3.64.}
\label{plotnoise1}
\end{figure}

\end{example}

\begin{example}\label{exgauss}
We employ the same signal  $f$  as in Example \ref{exuni} with $M=8$, but this time the noise vector
$(n_{k})_{k=0}^{2N-1}$ is generated from a normal distribution with mean value zero using the \textsc{Matlab} function
\texttt{randn}. We take \texttt{sigma*randn(2N)}, where \texttt{sigma = 0.5*std(f(:))} and \texttt{std} denotes the standard deviation of $(f(k))_{k=0}^{2N-1}$.
We compute $10$ iterations with an average SNR of 6.07 (minimum SNR 5.80, maximum SNR 6.35), this corresponds to an average PSNR of 14.76.
The reconstruction errors for MPM and for ESPIRA-II are presented in Table \ref{tab_noise2}.
For MPM we use $N=L=600$ as well as $N=L=800$, for ESPIRA-II we take $N=600$ and $N=800$. In both cases we also assume that $M=8$ is known beforehand.
As before for uniform noise, it can be seen in in Table \ref{tab_noise2} that  ESPIRA-II outperforms MPM and provides very accurate estimates for the 8 knots, while MPM does not.
In Figure \ref{plotnoise2}, we present the reconstruction of the original function from the noisy data for  MPM and for ESPIRA-II
on the interval $[0,50]$, where the given noisy input data is plotted as red dots, the real part of the original function $f$ is plotted in black, and the reconstructed (real part) of the function  for MPM and ESPIRA-II are presented in red and blue, respectively.
As before ESPIRA-II strongly outperforms the results of MPM.

\begin{table}[h!]
\centering
\caption{Reconstruction error for noisy data with additive i.i.d.\ noise drawn from standard normal distribution with average SNR of 6.07.}
  \label{tab_noise2}
\begin{tabular}{ |p{1.5cm}||p{1.6cm}|p{1.6cm}| p{1.6cm}||p{1.6cm}|p{1.6cm}| p{1.6cm}| }
 \hline
   & \multicolumn{3}{|c||}{MPM} & \multicolumn{3}{|c|}{ESPIRA-II}\\
   \hline
$N =600$ & min & max & average & min & max & average\\[2mm]
 \hline
$e(\mathrm{Re}({\mathbf z}))$ & $1.03$e--$02$    &$1.63$e--$01$ & $7.18$e--$02$ &   $1.04$e--$04$    &$3.61$e--$04$ & $2.31$e--$04$ \\[2mm]
$e(\mathrm{Im}({\mathbf z}))$  & $4.44$e--$02$    &$7.73$e--$01$ & $3.92$e--$01$ & $1.52$e--$04$    &$5.98$e--$04$ & $2.87$e--$04$ \\[2mm]
$e(\gamra)$ & $6.48$e--$01$    & $3.79$e--$00$ & $2.22$e--$00$ & $5.43$e--$02$ & $1.18$e--$01$  & $8.94$e--$02$\\[2mm]
$e(f)$ & $5.50$e--$01$    & $9.28$e--$01$ & $8.76$e--$01$ & $6.19$e--$02$ & $6.63$e--$01$  & $5.68$e--$01$\\[2mm]
 \hline
 \end{tabular}
 \begin{tabular}{ |p{1.5cm}||p{1.6cm}|p{1.6cm}| p{1.6cm}||p{1.6cm}|p{1.6cm}| p{1.6cm}| }
 \hline
   & \multicolumn{3}{|c||}{MPM} & \multicolumn{3}{|c|}{ESPIRA-II}\\
   \hline
$N =800$ & min & max & average & min & max & average\\[2mm]
 \hline
$e(\mathrm{Re}({\mathbf z}))$ & $1.36$e--$02$    &$1.54$e--$01$ & $5.71$e--$02$ &   $7.83$e--$05$    &$4.10$e--$04$ & $2.16$e--$04$ \\[2mm]
$e(\mathrm{Im}({\mathbf z}))$  & $9.44$e--$02$    &$4.74$e--$01$ & $2.19$e--$01$ & $6.02$e--$05$    &$3.34$e--$04$ & $1.79$e--$04$ \\[2mm]
$e(\gamra)$ & $5.75$e--$01$    & $3.75$e--$00$ & $1.31$e--$00$ & $5.10$e--$02$ & $8.56$e--$02$  & $7.15$e--$02$\\[2mm]
$e(f)$ & $5.18$e--$01$    & $9.80$e--$01$ & $8.91$e--$01$ & $4.47$e--$02$ & $6.51$e--$01$  & $5.81$e--$01$\\[2mm]
 \hline
 \end{tabular}
\end{table}

\begin{figure}[h!]
\includegraphics[width=7.0cm]{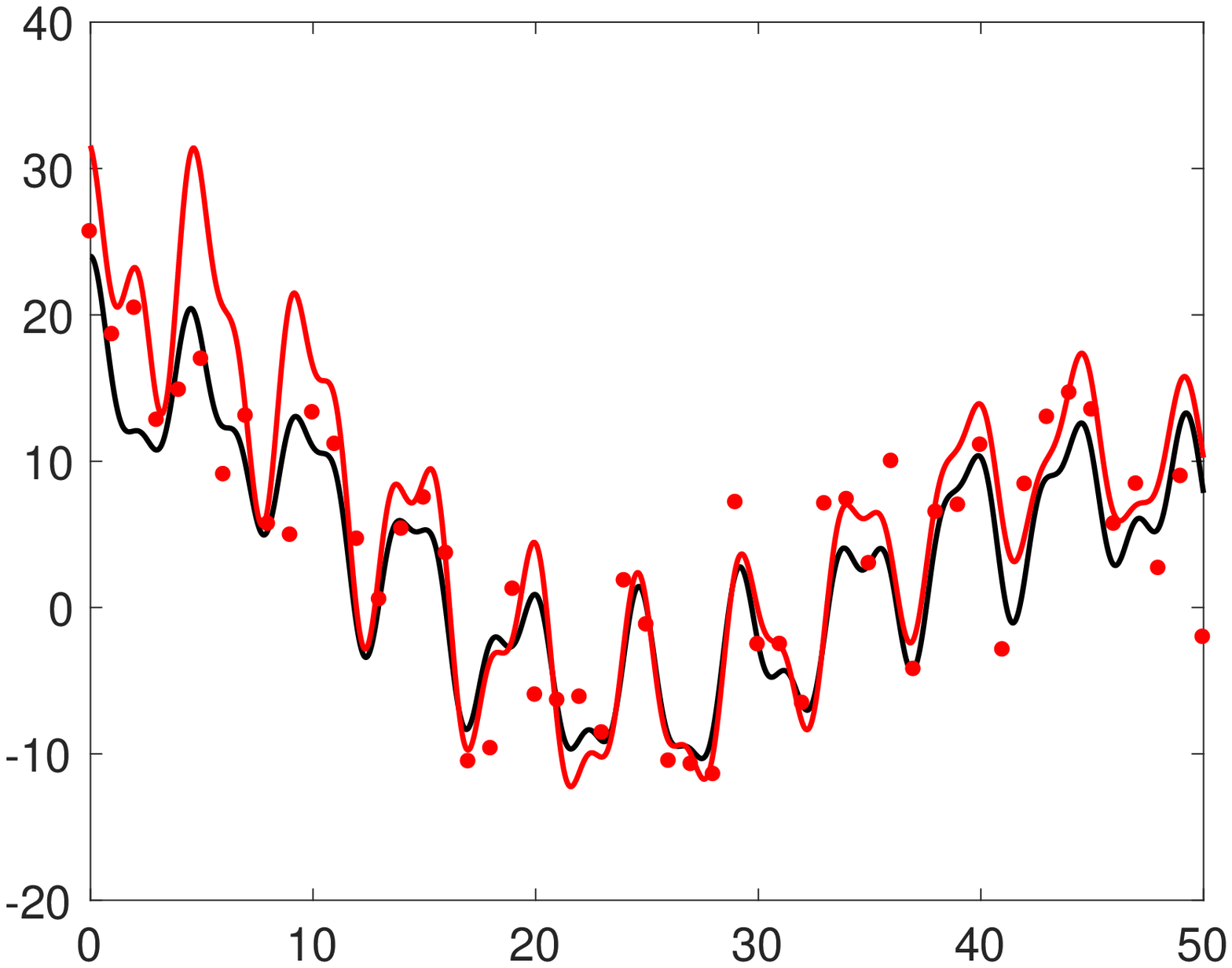}
\includegraphics[width=7.0cm]{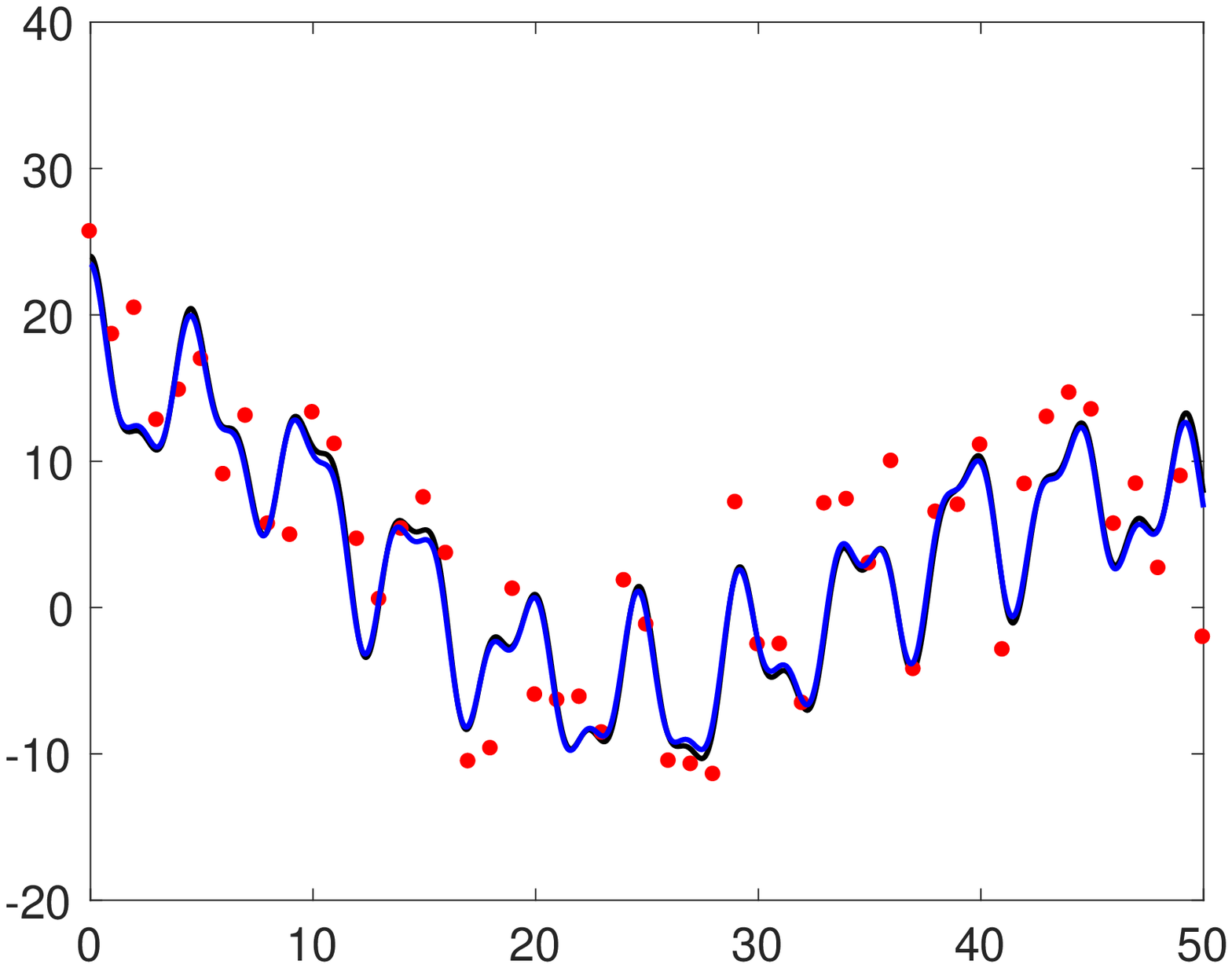}
\centering
\caption{\small Plot of the real part of the original function $f(t)$ (black), the noisy data (red dots) and the achieved reconstructions (of the real part) restricted to the interval $[0, 50]$. Left: Reconstruction by MPM (red). Right: Reconstruction by ESPIRA-II (blue) for Example \ref{exgauss} with  $N=800$ and normally distributed noise with mean $0$ and SNR 5.98.}
\label{plotnoise2}
\end{figure}
\end{example}

Since our new method ESPIRA-II strongly improves upon the estimation of parameters from noisy data 
compared to MPM, we conjecture that the new method is indeed statistically consistent while MPM and ESPRIT are not, see also \cite{BM86, Osborne95, ZP18}.

\section{Application of ESPIRA for function approximation}

We consider three examples for approximation of smooth functions by short exponential sums and show that ESPIRA outperforms ESPRIT and MPM also for approximation. 
The algorithms are implemented in \textsc{Matlab} and use IEEE standard floating point arithmetic with double precision.

\begin{example}\label{ex1/x}

We consider the approximation of $g(t)=\frac{1}{1+t} $ for $t \in [0,1]$.
We employ the equidistant samples $g(\frac{k}{2N}) = (1+\frac{k}{2N})^{-1}$, $k=0, \ldots , 2N-1$, for the approximation of $g$ by a short exponential sum ${f}$ of the form (\ref{1.1}) with 
$M=5$ terms. We use $N=60$, i.e., we take $120$ equidistant function values.
The  maximum error in $L_\infty[0,1]$-norm, 
\begin{equation} \label{errcm}
\max_{t \in [0, 1]}\left|\frac{1}{1+t}-\sum\limits_{j=1}^{M} \gamma_j \mathrm{e}^{\phi_j t} \right|,
\end{equation}
obtained by the  4 algorithms (ESPIRA I and II, MPM and ESPRIT),  
 is shown in Figure \ref{errorcom} using a logarithmic scale.

\begin{figure}[h]
\includegraphics[width=7cm]{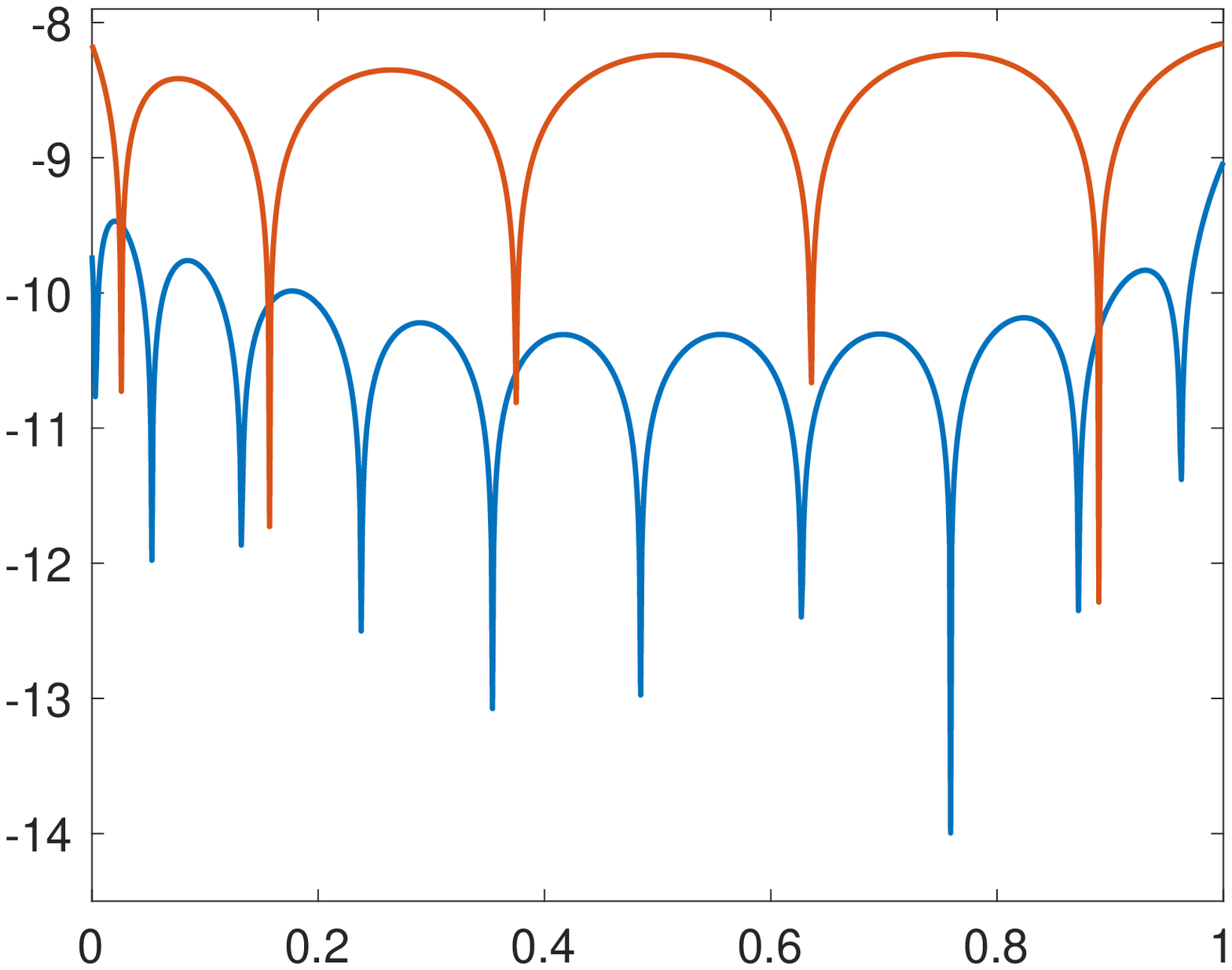} \qquad 
\includegraphics[width=7cm]{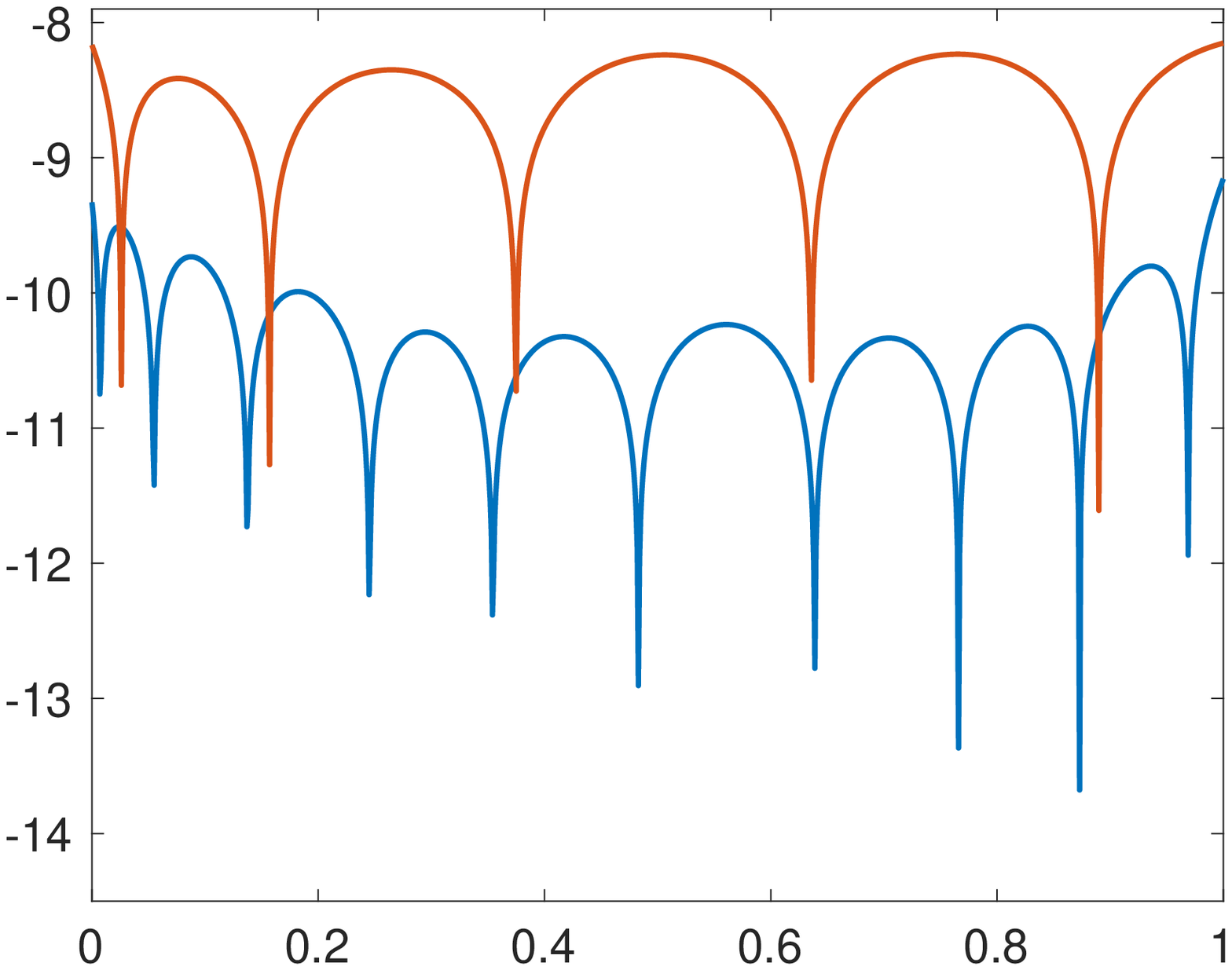}
\centering
\caption{\small The error in (\ref{errcm}) in logarithmic scale for $M=5$ for Left: ESPIRA-I (blue) and MPM (red), Right: ESPIRA-II (blue) and ESPRIT (red). }
\label{errorcom}
\end{figure}

For MPM and ESPRIT we employ the same input information as for ESPIRA with $120$ equidistant samples and choose $L=10$ as a bound for the order $M=5$ of the exponential sums and $\epsilon=10^{-11}$. For this setting ESPIRA gives  better approximation results.

For other comparable approximations of the function $f(t)= \frac{1}{1+t}$ on an interval $[0,R]$ with $R \ge 1$ or $[0, \infty)$  we refer to \cite{PT2013} and to \cite{Bbook,BH05,H2019}, where the Remez algorithm  is employed to obtain very good approximations of $f(t)$. The Remez algorithm provides slightly better results but requires a higher computational effort and (because of numerical instabilities) a computation with extended precision. The Remez algorithm directly aims at minimizing the error with respect to the maximum norm, while the nature of the ESPIRA algorithms is closer to achieving a small $2$-norm of the error of the given samples. The Remez algorithm is based on the requirement that the function $f(t)$ is available at any point on the desired interval, while ESPIRA works similarly as MPM and ESPRIT with a fixed number of given equidistant function values. 
However, for $M=5$ and $L=22$ we achieve with ESPIRA-II an absolute error of $1.96$e--$10$, while the Remez algorithm achieves with high precision computation $1.26$e--$10$, see \cite{H2019}, Table 1.

\end{example}

\begin{example}\label{exbessel}
Let us now consider the approximation of the Bessel function $J_0(100\pi t)$ on $t\in [0,1]$, see \cite{BM05, PT2013, Cu2020}.
Beylkin and Monzon \cite{BM05}, presented an algorithm for approximation of functions by short exponential sums which is related to Prony's method and to a finite version of AAK theory. The results in \cite{PT2013} apply to MPM and ESPRIT. In \cite{Cu2020}, a Prony-type matrix pencil method for cosine sums has been employed, where the numerical computations need a very high precision (up to 1200 digits) because of numerical instabilities.  
We would like to show that our ESPIRA algorithms provide approximation  results which are comparable to the results in  \cite{BM05}, while we  use IEEE standard floating point arithmetic with double precision and a numerical effort of ${\mathcal O}(N (M^{3}+\log N))$, where $N$ is the number of equidistant function samples and $M$ the number of terms in the exponential sum. 

The function $J_{0}$  is defined  by 
$$
J_0(t) \coloneqq \sum\limits_{k=0}^{\infty}\frac{(-1)^{k}}{(k!)^{2}} \, \left(\frac{t}{2} \right)^{2k}, \qquad  t \in {\mathbb R}.
$$
As in  \cite{BM05},  we approximate $J_0(100\pi t)$ on $t\in [0,1]$.
 It was shown in  \cite{BM05} that an error 
  \begin{equation}\label{j0apr}
\max_{t \in [0,1]}\left|J_0(100\pi t) - \sum\limits_{j=1}^{M} \gamma_j \mathrm{e}^{\phi_j t} \right| \lesssim 10^{-11}
\end{equation}
can be achieved with $M=28$  terms. For ESPIRA-I and ESPIRA-II we employ the $1030$  function values $J_0\left(100\pi \frac{\ell}{1030}\right)$, $\ell=0,\ldots,1029$, and construct the exponential sum of order 28 in (\ref{j0apr}) with the error $\varepsilon=8.52\cdot 10^{-12}$.
For comparison, we employ MPM and ESPRIT choosing the same number of $2N=1030$ equidistant function values,  $L=250$ as an upper bound for the order of the exponential sums, and $M=28$,  see Figure \ref{fig4}. 

\begin{figure}[h]
\centering
\includegraphics[width=6.5cm]{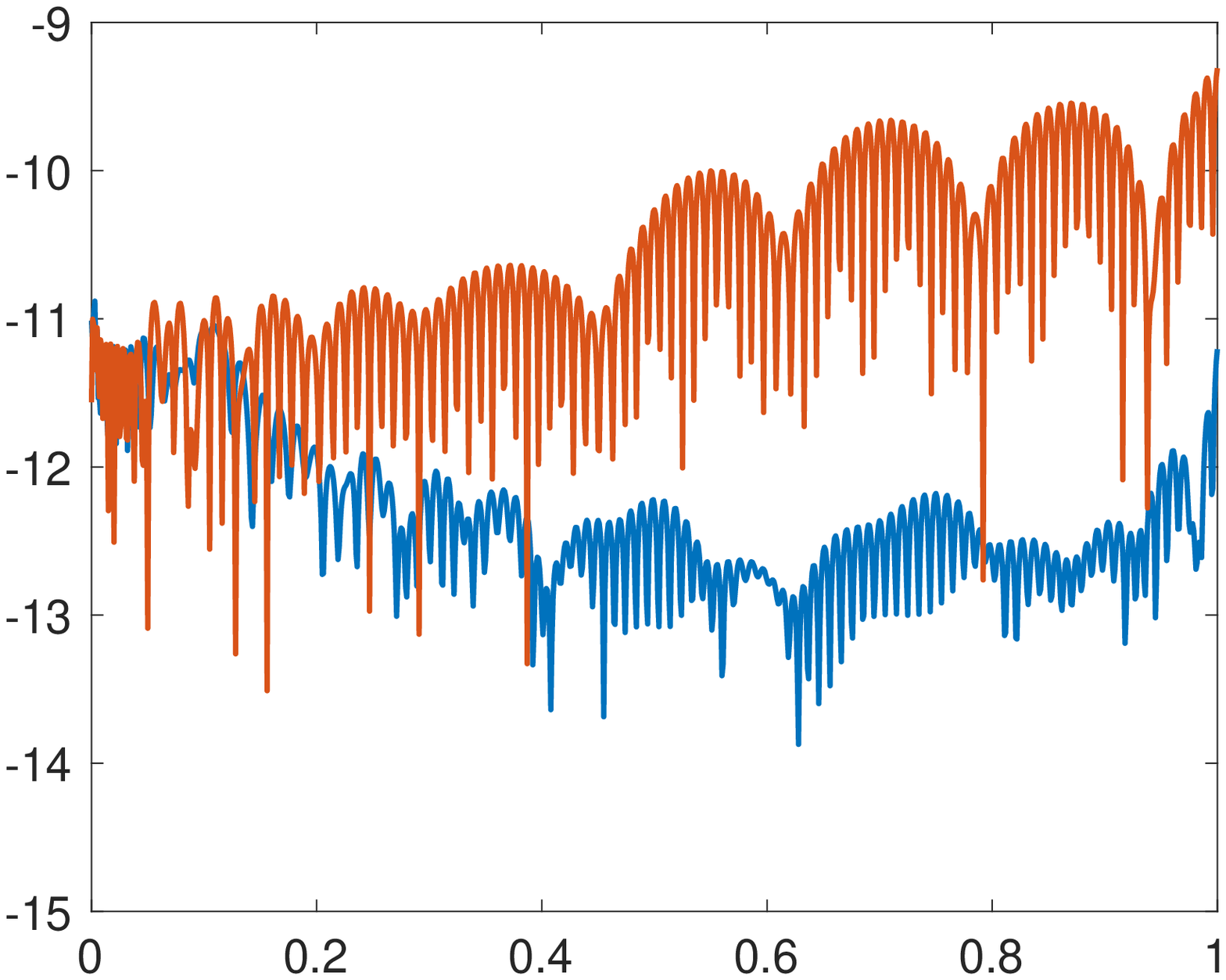} \qquad \qquad 
\includegraphics[width=6.5cm]{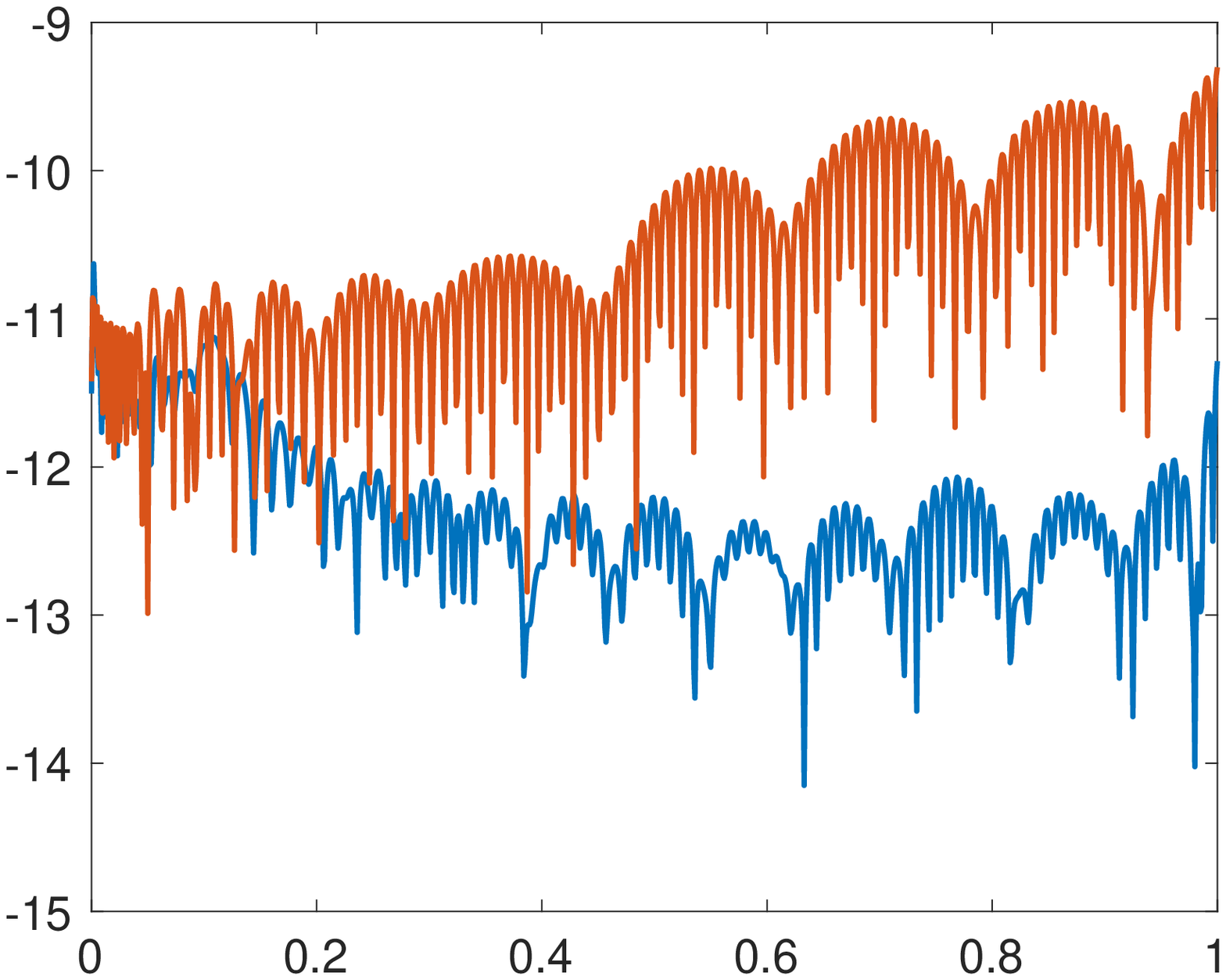}
\centering
\caption{\small The approximation error in (\ref{j0apr}) in logarithmic scale obtained using an exponential sum with $M=28$ terms. Left: with ESPIRA-I (blue) and MPM (red),
Right: with  ESPIRA-II (blue) and ESPRIT (red). }
\label{fig4}
\end{figure}

Similarly as observed with the algorithm in \cite{BM05} and with MPM in \cite{PT2013}, the computed locations of  complex normalized knots  $z_{j} = \mathrm{e}^{\phi_j /(100 \pi)}$ as well as of the corresponding coefficients $\gamma_{j}$, $j=1,\ldots,28$, computed by ESPIRA-I and ESPIRA-II appear on special contours, see Figure \ref{fig3}.

\begin{figure}[h]
\centering
\begin{subfigure}{0.35\textwidth}
\includegraphics[width=\textwidth]{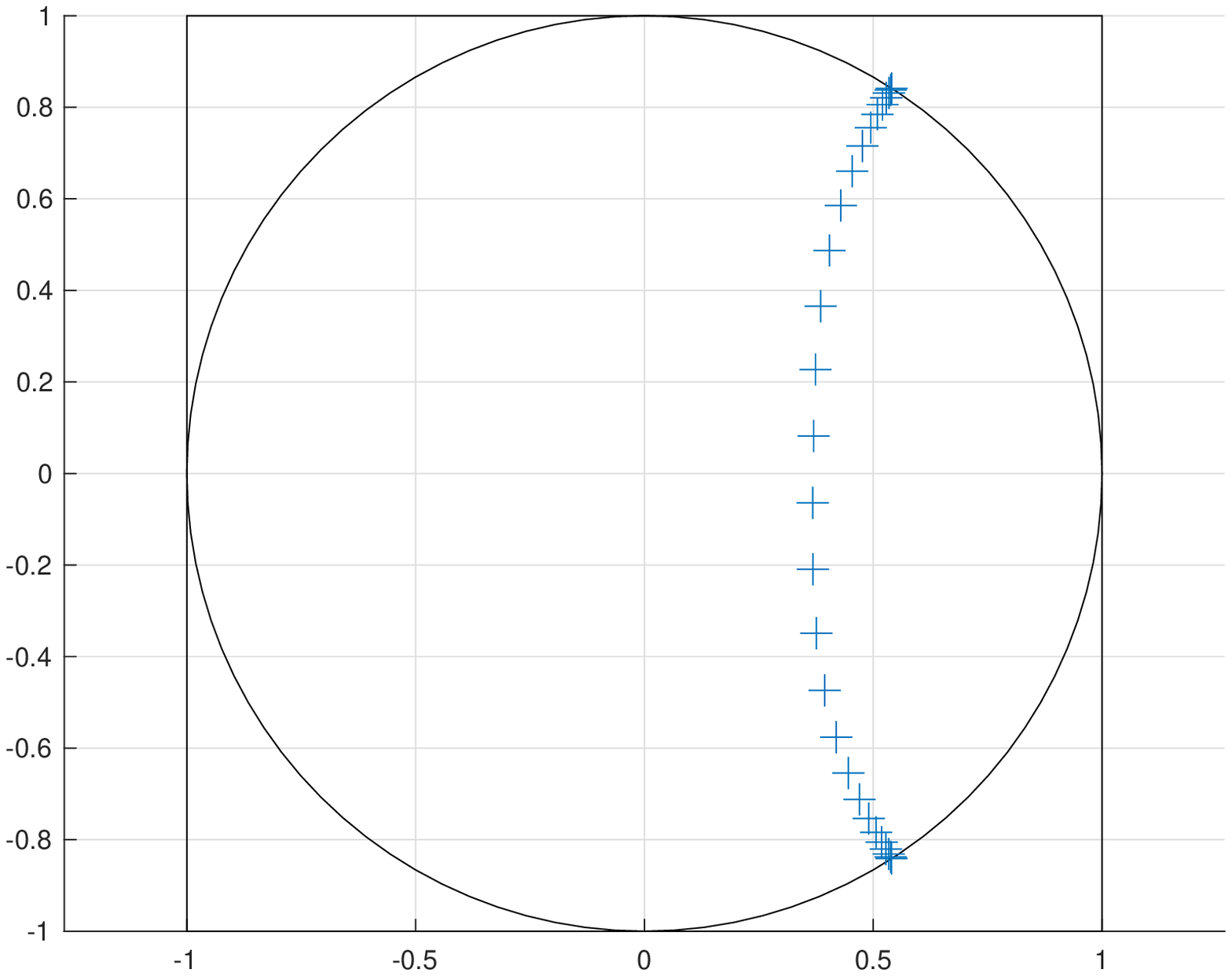}   
\end{subfigure}
\hspace*{-10mm}
\begin{subfigure}{0.35\textwidth}
\includegraphics[width=\textwidth]{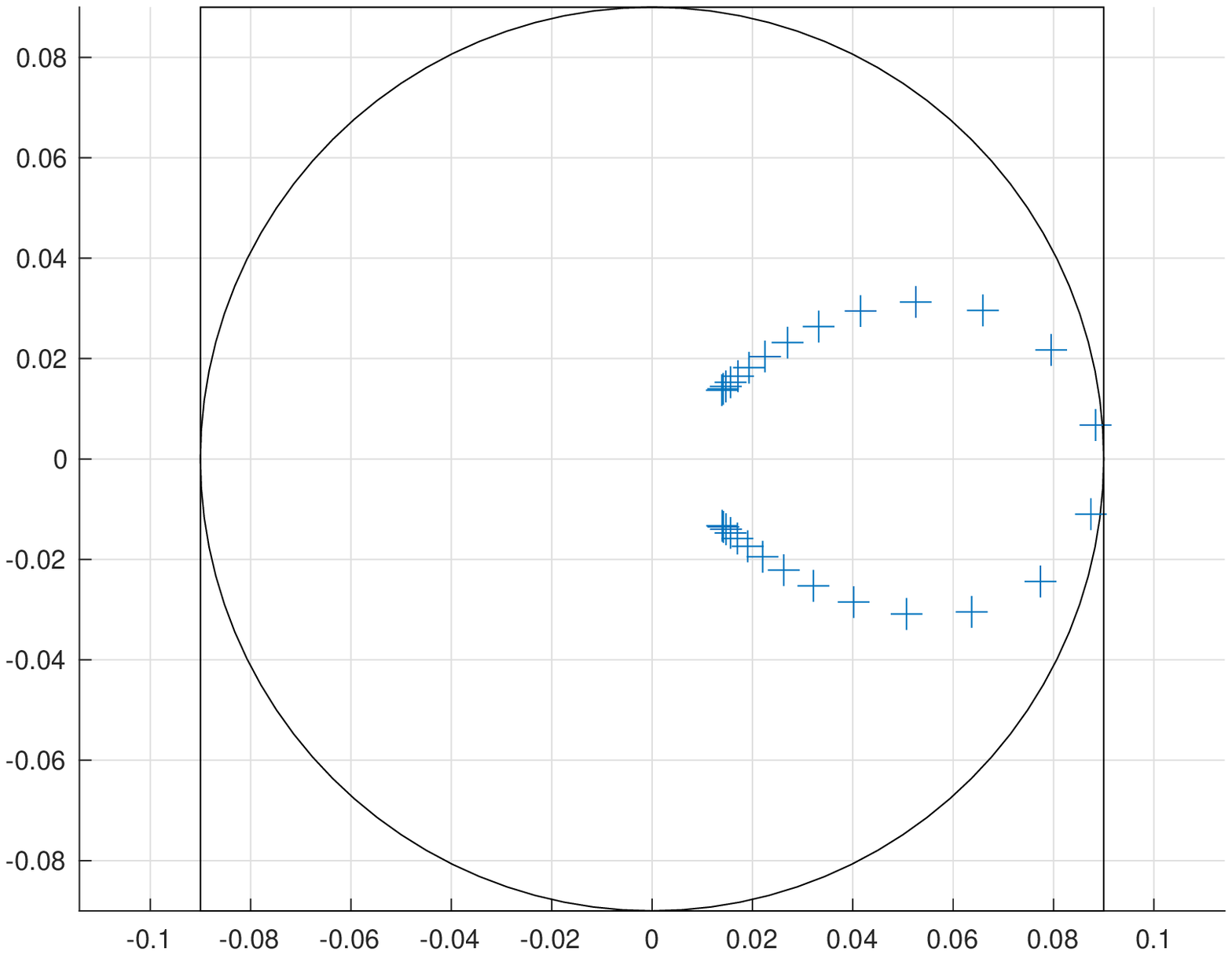}   
\end{subfigure}
\hspace*{-10mm}
\begin{subfigure}{0.35\textwidth}
\includegraphics[width=\textwidth]{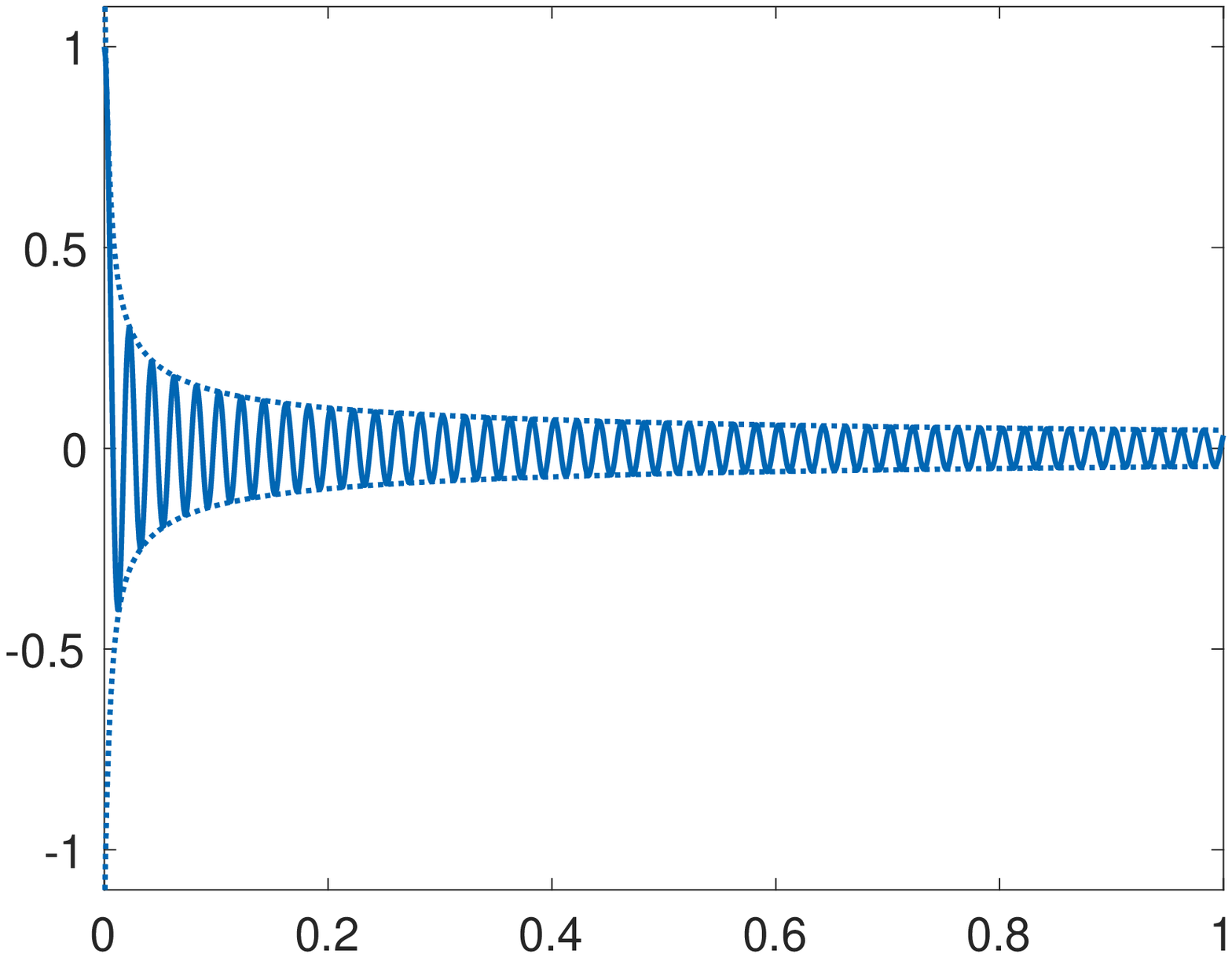}   
\end{subfigure}
\caption{\small Left: complex normalized knots $z_{j}=\mathrm{e}^{ \phi_j/(100\pi)}$ and Middle: complex weights $\gamma_{j}$ for the approximation of $J_0(100\pi t)$ by exponential sum of order $M=28$ on the segment $[0,1]$ obtained with ESPIRA-I. Right:
Graph of the Bessel function $J_0(100\pi t)$ together with its envelope functions of order 28 constructed with ESPIRA-I on the segment $[0,1]$.}
\label{fig3}
\end{figure}

As in \cite{BM05}, our algorithms can also be used for the construction of the decreasing envelope function of $J_0(100\pi t)$ that touches all local maxima of the Bessel function and of the  increasing envelope function that touches all local minima. Using the approximation in (\ref{j0apr}) we define 
$$
\mathrm{env}(t)=\sum\limits_{j=1}^{n} |\gamma_j| \mathrm{e}^{\mathrm{Re} \phi_j t}
$$
as the positive envelope of the Bessel function and $-\mathrm{env}(t)$ as its negative envelope. The result of ESPIRA-I is presented in Figure \ref{fig3} (right).

\end{example}

\begin{example}\label{exdiri}
Finally we consider one more example from \cite{BM05}, the periodic Dirichlet kernel
$$
D_n(t)=\frac{1}{2n+1}\sum\limits_{k=-n}^{n} \mathrm{e}^{2\pi i k t}=\frac{\sin ((2n+1)\pi t)}{(2n+1)\sin(\pi t)},
$$
of order 50 on the segment $[0,1]$,  see Figure \ref{dir}.

\begin{figure}[h]
\includegraphics[width=7.0cm]{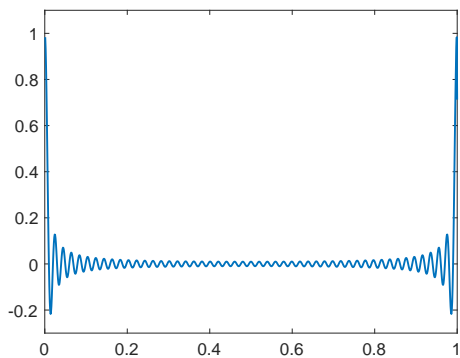} 
\centering
\caption{\small Dirichlet kernel $D_{50}(t)$ on the segment $[0,1]$ }
\label{dir}
\end{figure}

In \cite{BM05} it was shown that in order to achieve the approximation error
  \begin{equation}\label{dappr}
\max_{t \in [0,1]}\left|D_{50}(t) - \sum\limits_{j=1}^{M} \gamma_j \mathrm{e}^{\phi_j t} \right| \lesssim 10^{-8},
\end{equation}
it is enough to take $M=44$.
To catch the behavior of this periodic function by approximation with an exponential sum, the knots $z_{j}$ are located both inside and outside the unit circle. We construct an exponential sum of order 44 using 2000 samples $D_{50}(\ell/2000)$, $\ell=0,...,1999$, i.e., $N=1000$ and $L=N=1000$ using MPM and ESPRIT.  But for the found 44 knots $z_{j}$  the Vandermonde matrix is extremely ill-conditioned.

Similarly as for MPM and ESPRIT the approach in \cite{BM05} is based on the computation of the coefficient vector  $\bgamma=(\gamma_j)_{j=1}^{M}$ by solving the system $ {\mathbf V}_{2N,M} ({\mathbf z}) \, \bgamma = {\mathbf f} $ with the Vandermonde matrix ${\mathbf V}_{2N,M}({\mathbf z})$  defined in (\ref{V}). 
Therefore, the authors of \cite{BM05} employ the auxiliary function $G_{50}(t)$ that satisfies $D_{50}=G_{50}(t)+ G_{50}(1-t)$ and approximate $G_{50}(t)$ instead of $D_{50}(t)$ to achieve an approximation error of about $10^{-8}$, see \cite{BM05}, Figure 8.
Further, in our computations, the  extremely large condition  numbers of the Vandermonde matrices ($10^{119}$ for MPM and $10^{75}$ for ESPRIT)  imply that MPM and ESPRIT fail to construct an exponential sum of good approximation for the Dirichlet kernel, see Figure \ref{dirmpmes}.

\begin{figure}[h]
\includegraphics[width=6.5cm]{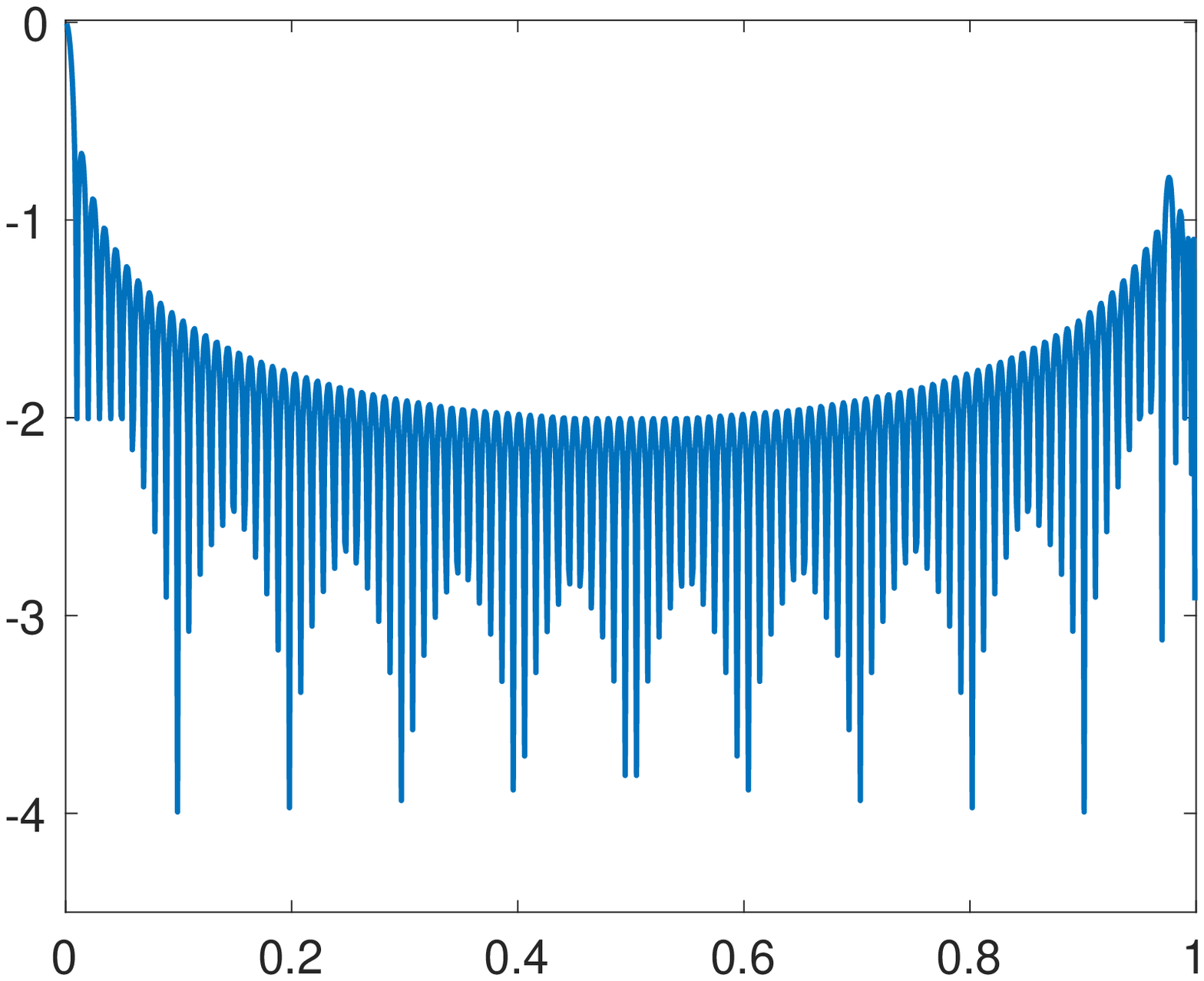} \qquad 
\includegraphics[width=6.5cm]{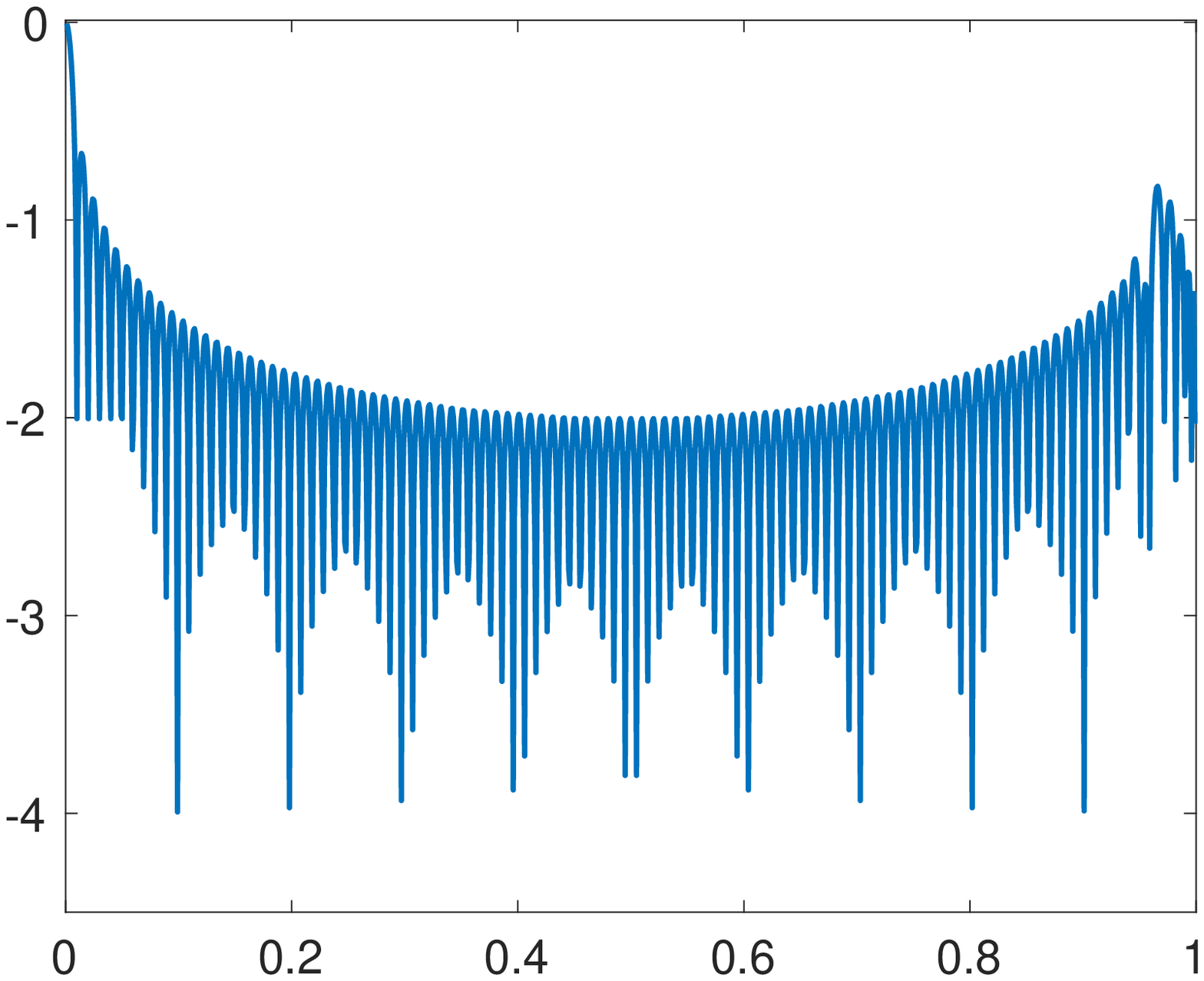}
\centering
\caption{\small The error in (\ref{dappr}) in logarithmic scale for $M=44$ obtained with MPM (left) and ESPRIT (right). }
\label{dirmpmes}
\end{figure}

 In ESPIRA-I, the coefficient vector $\bgamma=(\gamma_j)_{j=1}^{M}$ is computed by Algorithm \ref{alg3} using the Cauchy matrix $\mathbf{C}_{2N,M}$, which has a significantly smaller condition number. Therefore,  ESPIRA-I constructs the exponential sum of order $44$  with the same good approximation error (\ref{dappr}) as in \cite{BM05}. Taking this into account, we also modified ESPIRA-II  for this example according to Remark \ref{cauchyrem}, see Figure \ref{diresp}. Similarly as for approximation of the Bessel function in Example \ref{exbessel}, we obtain special location patterns for the  knots $\mathrm{e}^{\phi_j }$ and the weights $\gamma_j$ in (\ref{dappr}), see Figure \ref{noddir}.

\begin{figure}[h]
\includegraphics[width=6.5cm]{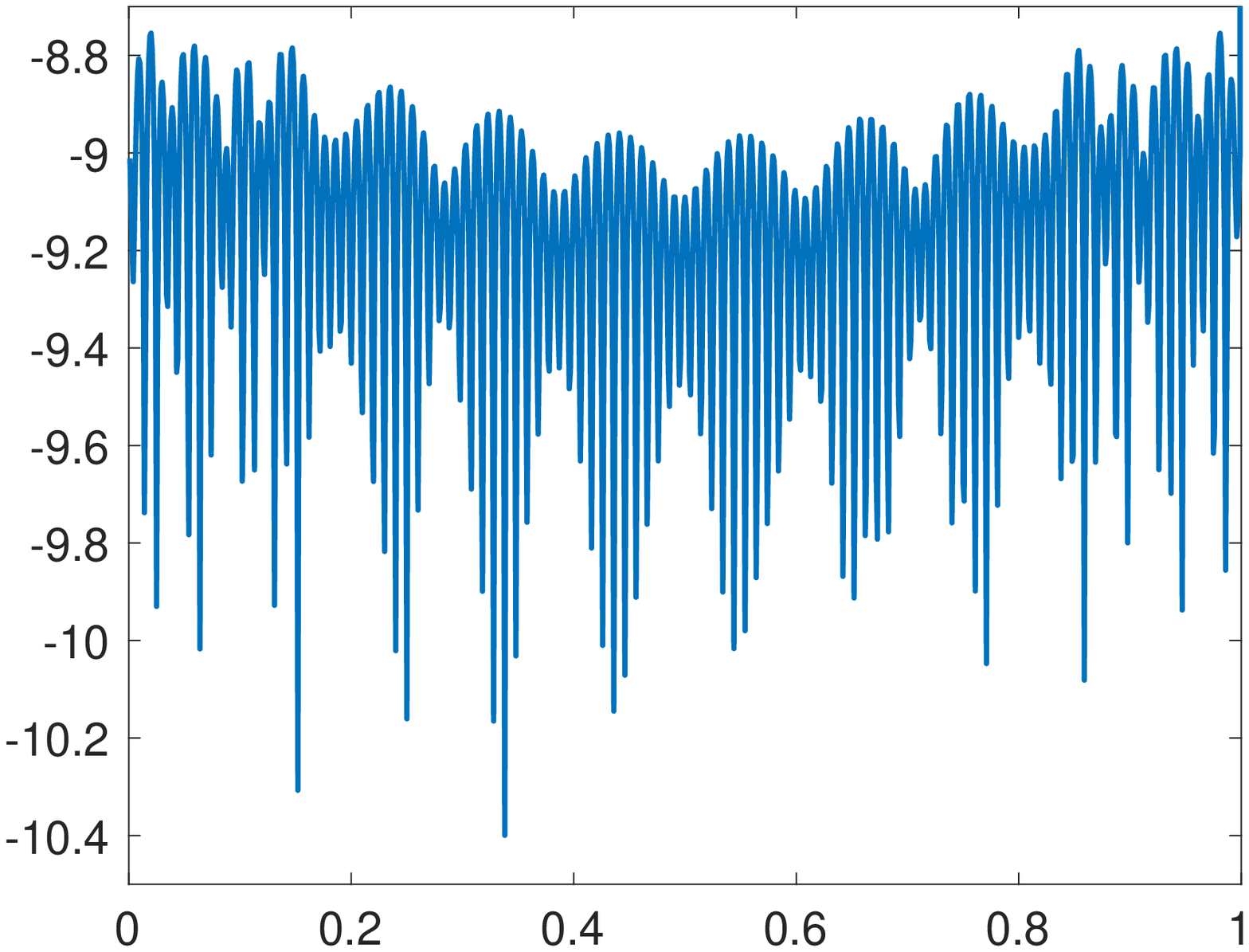} \qquad 
\includegraphics[width=6.5cm]{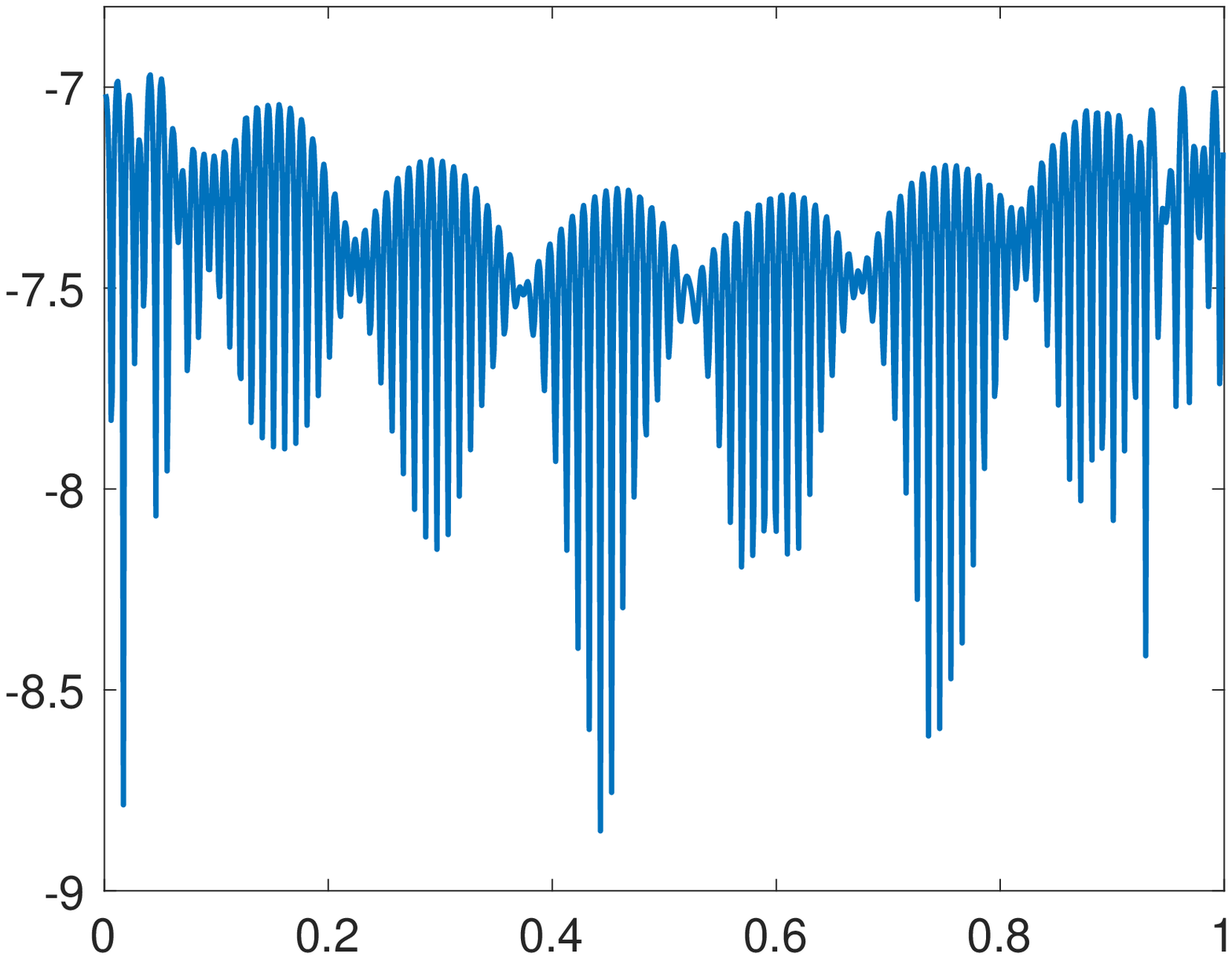}
\centering
\caption{\small The error in (\ref{dappr}) in logarithmic scale for $M=44$ obtained with ESPIRA-I (left) and ESPIRA-II (right). }
\label{diresp}
\end{figure}

\begin{figure}[h]
\includegraphics[width=6.5cm]{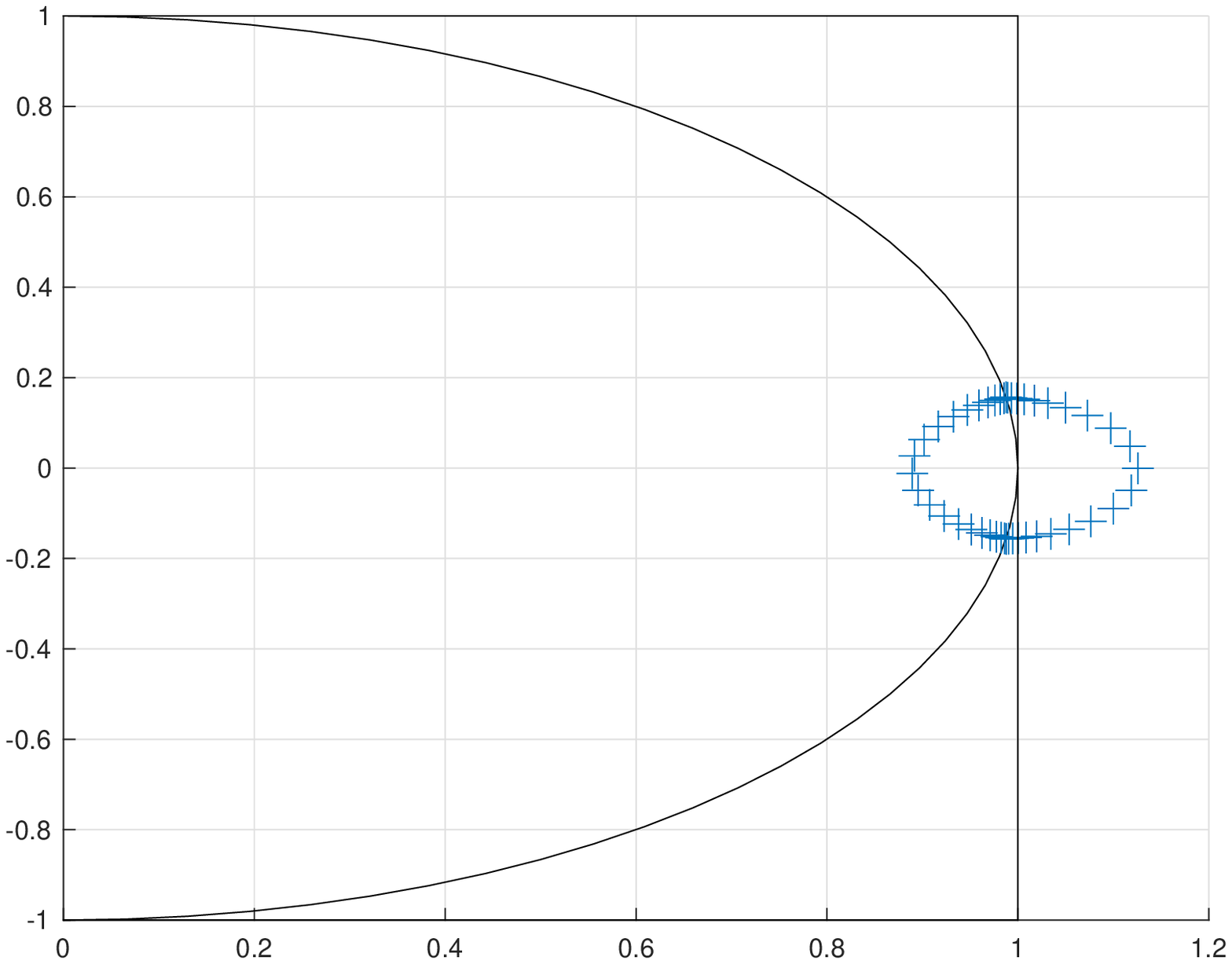} \qquad 
\includegraphics[width=6.5cm]{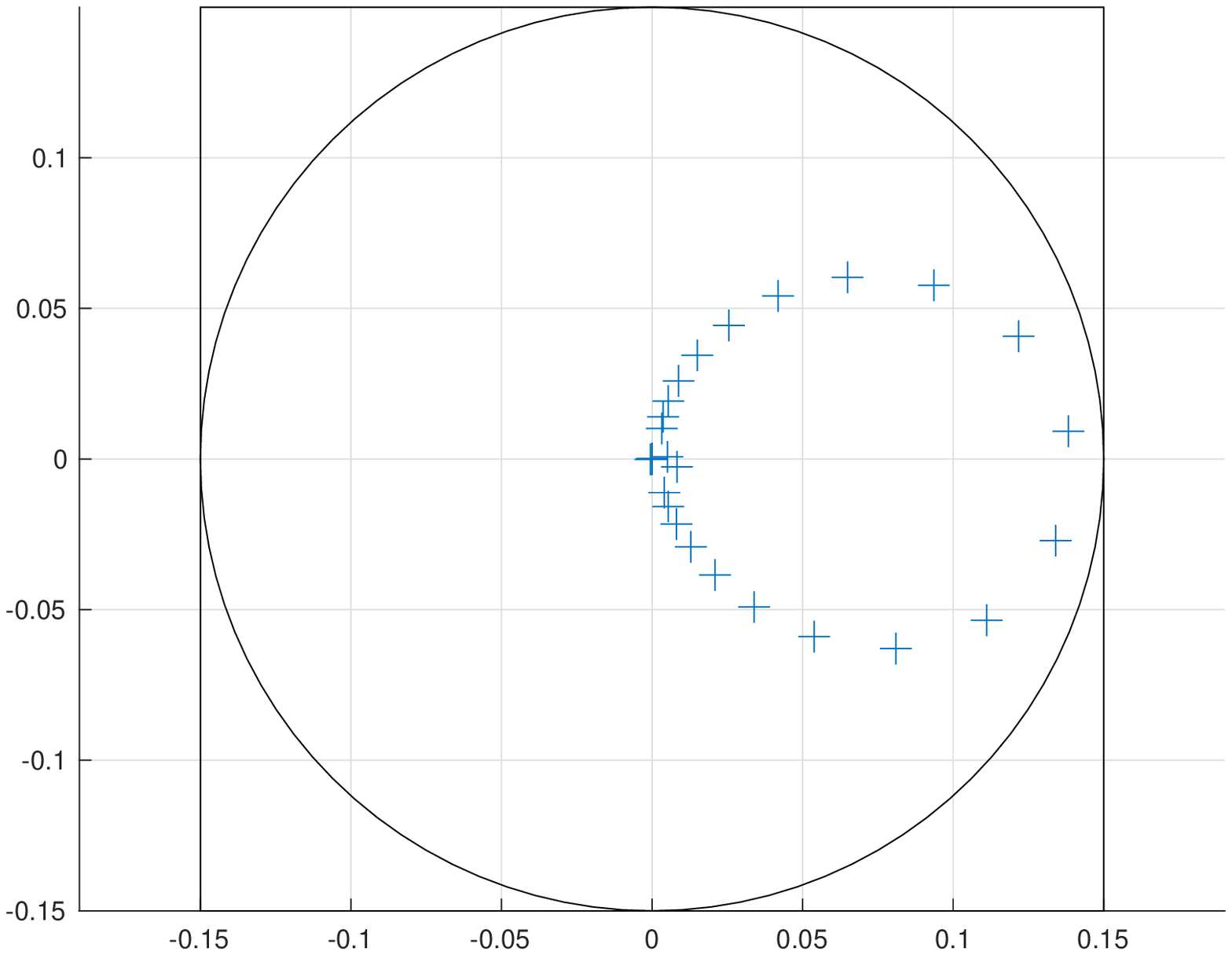}
\centering
\caption{\small Left: complex knots $z_{j}=\mathrm{e}^{ \phi_j}$ and Right: complex weights $\gamma_{j}$ for the approximation of $D_{50}(t)$ by an exponential sum of order $M=44$ on the segment $[0,1]$ obtained with ESPIRA-I. }
\label{noddir}
\end{figure}

\end{example}

\section{Conclusions}

In this paper  we have proposed  two new algorithms, ESPIRA-I and ESPIRA-II, for  stable reconstruction of all parameters of exponential sums $$f(t) = \sum\limits_{j=1}^{M}  \gamma_{j} \, {\mathrm e}^{\phi_{j}t } = \sum\limits_{j=1}^{M}  \gamma_{j} \, z_{j}^{t}$$ from given samples $f_{k} = f(k)$, $k=0, \ldots , 2N-1$ with $N>M$.
Other than the well-known Prony-type methods, such as ESPRIT and MPM, and methods based on (convex) optimization, our algorithms  employ  iterative  rational approximation.
If the knots $z_{j}= {\mathrm e}^{\phi_{j}}$  satisfy $z_{j}^{2N} \neq 1$,  then the discrete Fourier transform vector $\hat{\mathbf f}$ of ${\mathbf f}=(f_{k})_{k=0}^{2N-1}$ possesses a rational structure that is exploited.

ESPIRA-I is directly based on the AAA algorithm, and knots $z_{j}$ with $z_{j}^{2N} = 1$ have to be reconstructed in  a post-processing step.
For ESPIRA-II, we use only the iterative search of index sets to construct tall, well-conditioned Loewner matrices and can show that the parameters $z_{j}$ can be reconstructed from a matrix pencil problem applied to these Loewner matrices. This observation provides the connection to MPM and ESPRIT, where a matrix pencil problem for Hankel matrices is solved.
Therefore ESPIRA-II can be directly applied to reconstruct all knots $z_{j}$ without any case study.

Our numerical experiments show that the reconstruction of parameters from exact data is performed by the ESPIRA algorithms with the same precision as with MPM or ESPRIT, but with less computational effort if $N>M^{2}$.
The ESPIRA algorithms strongly outperform MPM and ESPRIT for reconstruction of  parameters  from noisy input data. We conjecture that these very good reconstruction results are achieved since ESPIRA is (almost) statistically consistent, while MPM and ESPRIT are not. Finally, the ESPIRA algorithms can be applied  for approximation of functions by short exponential sums and deliver good results.
ESPIRA-I provides very accurate results using usual double precision arithmetic for approximation, even where MPM and ESPRIT completely fail  because of ill-conditioned matrices. 
We recommend to use ESPIRA-I if it is known beforehand that the wanted knots $z_{j}$ are not on the unit circle, and to use ESPIRA-II in case of knots on the unit circle.

\section*{Acknowledgement}
The authors would like to thank the reviewers for constructive advices to improve the representation of the paper.
The authors gratefully acknowledge support by the German Research Foundation in the framework of the RTG 2088. The second author acknowledges support by the EU MSCA-RISE-2020 project EXPOWER. 

\small

\end{document}